\newcolumntype{L}{>{\raggedright\arraybackslash}X}
\numberwithin{equation}{section}
\newtheorem{theorem}{Theorem}[section]
\newtheorem{cor}[theorem]{Corollary}
\newtheorem{lemma}[theorem]{Lemma}
\newtheorem{remark}[theorem]{Remark}
\newtheorem{defin}[theorem]{Definition}
\newcommand{\cF}{\mathcal{F}}
\newcommand{\cI}{\mathcal{I}}
\newcommand{\cP}{\mathcal{P}}
\newcommand{\R}{\mathbb{R}}
\newcommand{\N}{\mathbb{N}}
\DeclareMathOperator{\logeps}{\log\tfrac{1}{\eps}}
\newcommand*{\eps}{\varepsilon}
\newcommand*{\lam}{\lambda}
\newcommand*{\sig}{\sigma}
\newcommand*{\kap}{\kappa}
\newcommand{\plusminus}{\raisebox{.2ex}{$\scriptstyle\pm$}}
\DeclareMathOperator*{\argmin}{argmin}
\renewcommand{\leq}{\leqslant}
\renewcommand{\geq}{\geqslant}
\def\blfootnote{\gdef\@thefnmark{}\@footnotetext}
\begin{document}

\title{Acceleration by Stepsize Hedging I: \\ Multi-Step Descent and the Silver Stepsize Schedule}
	
	\author{
		Jason M. Altschuler
		\\	UPenn \\	\texttt{alts@upenn.edu}
		\and
		Pablo A. Parrilo \\
		LIDS - MIT \\	\texttt{parrilo@mit.edu}
	}
	\date{\today}
	\maketitle

\begin{abstract}	
   Can we accelerate convergence of gradient descent without changing the algorithm---just by carefully choosing stepsizes? Surprisingly, we show that the answer is yes. 
   Our proposed \emph{Silver Stepsize Schedule} optimizes strongly convex functions in $\kappa^{\log_{\rho} 2} \approx \kappa^{0.7864}$ iterations, where $\rho=1+\sqrt{2}$ is the silver ratio and $\kappa$ is the condition number.
	 This is intermediate between the textbook unaccelerated rate $\kappa$ and the accelerated rate $\kappa^{1/2}$ due to Nesterov in 1983. The non-strongly convex setting is conceptually identical, and standard black-box reductions imply an analogous partially accelerated rate $\eps^{-\log_{\rho} 2} \approx \eps^{-0.7864}$. We conjecture and provide partial evidence that these rates are optimal among all stepsize schedules. 

    \par The Silver Stepsize Schedule is constructed recursively in a fully explicit way. It is
    non-monotonic, fractal-like, and approximately periodic of period $\kappa^{\log_{\rho} 2}$. This leads to a phase transition in the convergence rate: initially super-exponential (acceleration regime), then exponential (saturation regime).

    \par The core algorithmic intuition is \emph{hedging} between individually suboptimal strategies---short steps and long steps---since bad cases for the former are good cases for the latter, and vice versa. Properly combining these stepsizes yields faster convergence due to the misalignment of worst-case functions. The key challenge in proving this speedup is enforcing long-range consistency conditions along the algorithm's trajectory. We do this by developing a technique that recursively glues constraints from different portions of the trajectory, thus removing a key stumbling block in previous analyses of optimization algorithms. More broadly, we believe that the concepts of hedging and multi-step descent have the potential to be powerful algorithmic paradigms in a variety of contexts in optimization and beyond.

	\par This series of papers publishes and extends the first author’s 2018 Master’s Thesis (advised by the second author)---which established for the first time that carefully choosing stepsizes can enable acceleration in convex optimization.  Prior to this thesis, the only such result was for the special case of quadratic optimization, due to Young in 1953.
\end{abstract}

	\newpage
	\setcounter{tocdepth}{2}
	\tableofcontents
	
	\newpage
	
\section{Introduction}\label{sec:intro}

Gradient descent (GD) is a simple iterative algorithm to minimize an objective function $f$ by producing better and better estimates via the update
\begin{align}
	x_{t+1} = x_t  - \alpha_t \nabla f(x_t)\,, \qquad \forall t = 0, 1, 2, \dots.
	\label{eq:gd}
\end{align}
GD dates back nearly two hundred years to the work of Cauchy~\citep{cauchy1847}, yet it (and its variants) remain a primary workhorse in modern optimization, engineering, and machine learning due to the practical efficacy, 
simplicity, and scalability. 
It is of both theoretical and practical importance to analyze the convergence of GD and moreover to optimize parameters so that this convergence is as fast as possible.

\par A central fact in convex optimization is that with a prudent choice of the stepsize schedule $\{\alpha_t\}$---the only\footnote{In convex optimization, we typically view the initialization $x_0$ as part of the problem instance rather than a parameter choice, since $x_0 = 0$ without loss of generality after a possible translation of the objective function $f$.} parameters of the algorithm---running GD from any initialization $x_0$ produces iterates which optimize $f$ to arbitrary accuracy. 
Quantifying this statement leads to two intertwined questions: How fast does $x_n$ converge to a minimizer $x^*$ of $f$? And what stepsize choice $\{\alpha_t\}$ leads to the fastest convergence rate?
\par This series of papers revisits these classical questions in the fundamental setting of smooth\footnote{In the non-smooth setting, it is classically known that acceleration is impossible, and moreover GD achieves the minimax-optimal convergence rate  with simple monotonically decaying stepsize schedules like $\alpha_t \asymp 1/\sqrt{t}$~\citep{nesterov-survey}.} convex optimization. Our overarching goal is to understand how much mileage can be obtained by simply optimizing the stepsize choice for GD. 

\par Note that this is markedly different from the past forty years of literature on accelerating the convergence rate for GD. That literature---starting from Nesterov's seminal work in 1983~\citep{nesterov-agd}---achieves faster convergence rates by modifying the GD algorithm with extra building blocks such as momentum, auxiliary sequences, or other internal dynamics. See the related work section or the recent survey~\citep{d2021acceleration}. In contrast, we investigate the basic question of: can we accelerate convergence without changing the GD algorithm---just by optimizing the stepsizes?

\paragraph*{Mainstream approach.} The standard analysis of GD uses a constant stepsize schedule, i.e., $\alpha_t = \bar{\alpha}$ for all iterations $t$; see e.g.~the textbooks~\citep{bubeck-book,boyd,polyakbook,hazan2016introduction,nesterov-survey,luenberger1984linear,BertsekasNonlinear} among many others. For example, 
$\bar{\alpha} = 1/M$ in the setting of $M$-smooth convex objectives, or $\bar{\alpha} = 2/(M+m)$ if the objectives are additionally $m$-strongly convex. This prescription is based on the following fact: 
\begin{gather}
	\text{For one iteration of GD, there is a unique stepsize } \bar{\alpha} \text{ achieving the fastest convergence rate} \nonumber
	\\
	\text{(in the worst case over functions and initializations).}
	\label{eq:intro:1-step}
\end{gather}
This is provably correct. For example, in the strongly convex setting, this $\bar{\alpha}$ provides the optimal contraction rate---a larger stepsize $\alpha_t > \bar{\alpha}$ can lead to overshooting the target $x^*$, and a smaller stepsize $\alpha_t < \bar{\alpha}$ can lead to undershooting $x^*$.

\par However, it is well-known that even after optimizing the constant $\bar{\alpha}$, this constant stepsize schedule leads to a slow convergence rate. (Hence the intensive research on accelerated GD.) Moreover, even though many alternative stepsize schedules have been proposed in both theory and practice---e.g.,  exact line search, Armijo-Goldstein rules, Polyak-type schedules, Barzilai-Borwein-type schedules, etc., see the related work section---none of these alternative schedules have led to an analysis that outperforms the slow ``unaccelerated'' rate of constant stepsize GD. Conventional wisdom therefore dictates that slow convergence is unavoidable, unless one modifies GD by adding extra building blocks beyond choosing stepsizes, e.g., via momentum.

\begin{table}[]
	\small
	\begin{tabular}{|c|c|c|}
		\hline
		& \textbf{Quadratic }                          & \textbf{Convex      }                              \\ \hline
		\textbf{Mainstream stepsizes}  & $\Theta(\kappa)$ by constant stepsizes (folklore)  & $\Theta(\kappa)$ by constant stepsizes (folklore)   \\
		\textbf{Additional dynamics} & $\Theta(\sqrt{\kappa})$ by Heavy Ball~\citep{polyak1964some}          & $\Theta(\sqrt{\kappa})$ by Nesterov Acceleration~\citep{nesterov-agd}     \\
	\textbf{Hedged stepsizes}    & $\Theta(\sqrt{\kappa})$ by Chebyshev Stepsizes~\citep{young53} & \color{blue}{ $\Theta(
		\kappa^{\log_{\rho} 2}
		)$ by Silver Stepsizes (Theorem~\ref{thm:main})} \\ \hline
	\end{tabular}
	\caption{\footnotesize{Iteration complexity of various approaches for minimizing a $\kappa$-conditioned function. The dependence on the accuracy $\eps$ is omitted as it is always $\log 1/\eps$. Mainstream stepsize schedules require $\Theta(\kappa)$ iterations; this is the textbook unaccelerated rate. For the special case of quadratics (left), accelerated rates of $\Theta(\sqrt{\kappa})$ can be equivalently achieved via Young's 1953 Chebyshev Stepsize Schedule~\citep{young53} or Polyak's 1964 Heavy Ball Algorithm~\citep{polyak1964some}. For the general case of convex functions (right), this equivalence between internal dynamics and varying stepsizes is false. Acceleration was first achieved by Nesterov's 1983 Fast Gradient Algorithm~\citep{nesterov-agd} and it has long been believed that in the convex setting, any acceleration requires modifying GD by adding internal dynamics, e.g., momentum. We prove that accelerated convex optimization \emph{is} possible by choosing better stepsizes.
		}}
\end{table}

\paragraph*{Faster convergence via dynamic stepsizes?} The premise of this series of papers
is that this is wrong. Why might the constant stepsize schedule $\alpha_t = \bar{\alpha}$ be sub-optimal?  Certainly it is optimal if GD is only run for $n=1$ iteration---this is the assertion~\eqref{eq:intro:1-step}. However, it is sub-optimal for $n$ steps of GD, for any $n > 1$. Briefly, this is because the statement for $n=1$ requires the worst-case problem instance (the objective function $f$ and initialization $x_0$) to align with the choice of stepsize $\alpha_t \neq \bar{\alpha}$ so that the convergence is slow, and for $n > 1$, the worst-case problem instances for each individual step might not align. This suggests an algorithmic opportunity: 
\begin{gather}
	\text{Is it possible to combine (individually suboptimal) stepsizes } \alpha_t \neq \bar{\alpha} \nonumber \\
	\text{to achieve faster convergence for } n > 1\text{ iterations?}
	\label{eq:intro:hedge}
\end{gather}
We refer to this algorithmic idea as \emph{hedging} between worst-case problem instances. (See \S\ref{sec:hedging} for a fully worked-out example.)

\paragraph*{Motivation: the special case of quadratics.} Of course, using non-constant stepsizes is not a new idea---for the special case of minimizing \emph{convex quadratics}, it has been known that this enables faster convergence since Young's seminal paper in 1953~\citep{young53}. In particular, for quadratic optimization, the optimal stepsize schedule is not constant, but given by the inverse roots of Chebyshev polynomials; the order of these stepsizes is irrelevant for the convergence rate (assuming exact arithmetic); and the resulting convergence rate is the so-called \emph{accelerated rate} that is optimal among all Krylov-subspace algorithms~\citep{nem-yudin}, including even modifications of GD that use momentum or internal dynamics. See the related work section \S\ref{ssec:intro:related} for further details.

\paragraph*{A longstanding gap between quadratic and convex optimization.} However, while the advantage of non-constant stepsizes has been well-understood for quadratic optimization for 70 years (and nowadays is even taught in many introductory optimization courses), it has remained entirely open whether this phenomenon extends to any setting of convex optimization beyond quadratics. In particular, it was unclear whether \emph{any} stepsize schedule could lead to \emph{any} speedup over the textbook GD convergence rate---even by a constant factor.
\par This gap is due to several reasons. First, many phenomena from the quadratic case are simply false in the setting of general convex optimization: e.g., the stepsize schedule based on roots of the Chebyshev polynomials is provably bad for the convex setting~\citep[Chapter 8]{altschuler2018greed}, and the order of the stepsizes dramatically affects the convergence rate in the convex setting~\citep[Chapter 8]{altschuler2018greed}. Second, any approach for establishing the advantage of a non-constant stepsize schedule must track how progress in the current iteration is affected by previous iterations---and this effect of history appeared to only be explicitly computable in the quadratic setting, essentially since that is the only case in which the GD map is linear (hence tractable to track after repeated iterations).

\subsection{Contribution and discussion}\label{ssec:intro:cont}

In this initial paper, we show that GD can converge faster for smooth convex optimization by using certain time-varying, non-monotonic stepsize schedules. This answers the hedging question~\eqref{eq:intro:hedge} in the affirmative. This series of papers publishes and extends the first author's 2018 Master's Thesis~\citep{altschuler2018greed} (advised by the second author), which proved such a result for the first time, see the related work section \S\ref{ssec:intro:related}. In particular, Chapter 8 of the thesis showed for the first time that a constant-factor improvement over the unaccelerated rate is possible in the smooth strongly convex setting, and Chapter 6 of the thesis showed for the first time that an asymptotic acceleration is possible in any setting beyond quadratics. (The latter result proves that arcsine-distributed random stepsizes achieve the fully accelerated rate $\Theta(\sqrt{\kappa} \log1/\eps)$ if the convex functions are separable; this will be detailed in a forthcoming paper.) Prior to this thesis, the only result for acceleration via choosing stepsizes was for the special case of quadratic optimization, due to Young in 1953.

Conceptually, we deviate from traditional analyses of GD (and other optimization algorithms) by directly analyzing the cumulative progress of all the steps of the algorithm, rather than combining separate bounds for the progress of individual steps. As mentioned above, this global analysis of \emph{multi-step descent} is provably necessary to show any benefit for any deviation from the constant stepsize schedule. Indeed, separately analyzing the progress for each iteration---as done, e.g., in standard GD analyses, in exact line search, or in standard offline-to-online convex optimization reductions---is provably too shortsighted and unavoidably leads to pessimistic, unaccelerated convergence rates. The key difficulty is how to track how different iterations affect progress in other iterations. Previously, this could be accomplished only for the special case of quadratics because then the GD update is linear. We show that this can be accomplished for general convex setting by this by using long-range consistency conditions between the gradients seen along the algorithm's trajectory. We provide a high-level overview of these new conceptual ideas in \S\ref{sec:hedging}.

Below, we formally state our main result in \S\ref{sssec:intro:dis:main}, and then discuss the improved convergence rate in \S\ref{sssec:intro:dis:rate}, the proposed stepsize schedule in \S\ref{sssec:intro:dis:schedule}, and the generality of the result in \S\ref{sssec:intro:dis:setting}.

\subsubsection{Main result: acceleration without momentum}\label{sssec:intro:dis:main}

\begin{figure}[t]
	\centering
 	\includegraphics[width=.24\columnwidth]{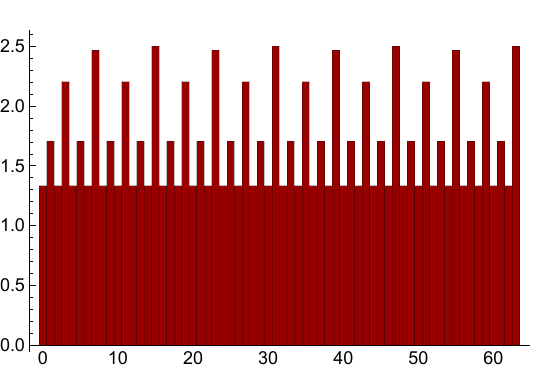}
   	\includegraphics[width=.24\columnwidth]{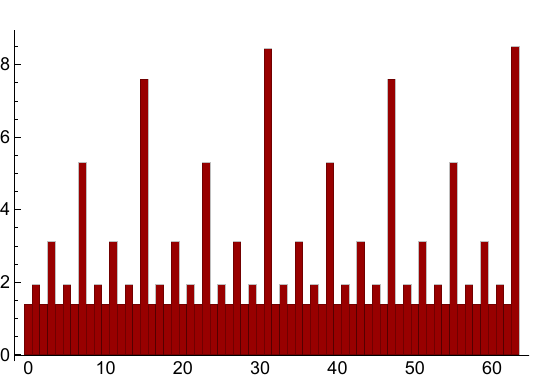}
 	\includegraphics[width=.24\columnwidth]{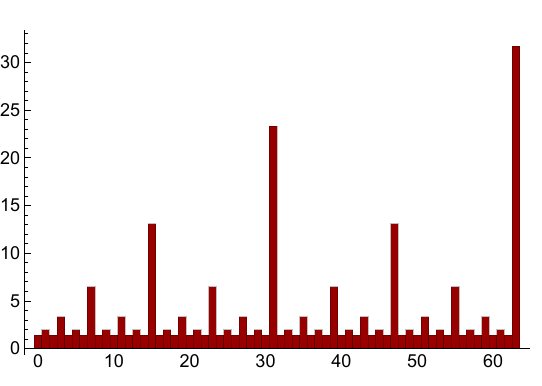}
 	\includegraphics[width=.24\columnwidth]{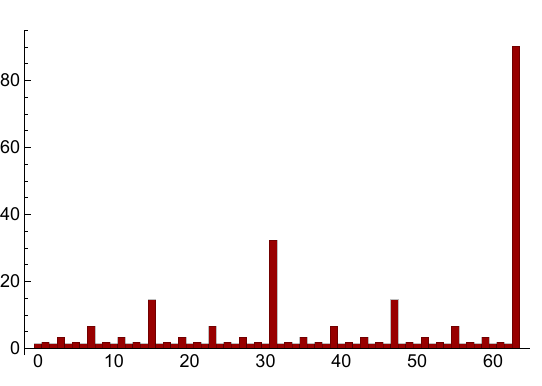}

	\caption{\footnotesize Silver Stepsize Schedule, for different condition numbers $\kappa = 4,16,64,256$ -- only the first 64 stepsizes are shown. Notice the recursive, fractal behavior and the approximate periodicity with period of size $n^* = \kappa^{\log_{\rho} 2}$; details in \S\ref{sssec:intro:dis:schedule}. Also note the different scales on the vertical axis, since the stepsizes are unnormalized, vs. the normalized stepsizes in Figure~\ref{fig:normalizedstepsizes}.}
	\label{fig:stepsizes}
\end{figure}

Formalizing this result requires restricting to a function class with controlled curvature. For concreteness, in this first paper we focus on the well-studied setting of strongly convex and smooth $f$, and we measure progress via distance to the optimum $x^*$. While smoothness is classically known to be required for acceleration~\citep{nesterov-survey}, the other choices and assumptions in the theorem statement are not essential: strong convexity can be relaxed to convexity, and the progress measure can be replaced with other standard desiderata; see the discussion in \S\ref{sssec:intro:dis:setting}. Below, let $\rho := 1 + \sqrt{2}$ denote the silver ratio, and assume throughout that $f$ is $\kappa$-conditioned, i.e., $1$-strongly convex and $\kappa$-smooth\footnote{Recall that this means $f$ is sandwiched between quadratic lower and upper bounds of curvature $1$ and $\kappa$, respectively, i.e., $ \frac{1}{2}\|v\|^2 \leq f(x+v) - f(x) - \langle v, \nabla f(x) \rangle \leq \frac{\kappa}{2}\|v\|^2$ for all $x, v \in \R^d$. For intuition, this is equivalent to the local curvature bound $I_d \preceq \nabla^2 f(x) \preceq \kappa I_d$ under the assumption of twice-differentiability (not required by our results).}---this is without loss of generality after rescaling.

\begin{theorem}\label{thm:main}
	For any horizon $n \in \N$ that is a power of $2$, any dimension $d$, any $\kappa$-conditioned function $f : \R^d \to \R$, and any initialization $x_0$,
	\begin{align}
		\|x_n - x^*\|^2 \leq \tau_n \|x_0 - x^*\|^2\,,
		\label{eq:thm-main:cert}
	\end{align}
	where $x^*$ denotes the unique minimizer of $f$, $x_n$ denotes the output of $n$ steps of GD using the Silver Stepsize Schedule (defined in \S\ref{sec:construction}), and $\tau_n$ denotes the $n$-step Silver Convergence Rate (defined in \S\ref{sec:construction}). Moreover, $\tau_n$ undergoes the following phase transition at $n^* = \Theta( \kappa^{\log_{\rho} 2} )$:
	\begin{itemize}
		\item \underline{Acceleration regime.} For $n \leq n^*$,
		\[
		\tau_n = \exp\left( - \Theta\left( \frac{n^{\log_2 \rho}}{\kappa}  \right) \right)\,.
		\]
		\item \underline{Saturation regime.} For $n > n^*$, 
		\[
		\tau_n = \exp\left( - \Theta\left( \frac{n}{n^*}  \right) \right)\,.
		\]
	\end{itemize}
	In particular, in order to achieve a final error $\|x_n - x^*\|^2 \leq \eps$, it suffices to run GD using the Silver Stepsize Schedule for 
	\begin{align}
		n = \Theta\left(  \kappa^{\log_{\rho} 2} \logeps \right) \approx \Theta\left( \kappa^{0.7864} \logeps\right) \;\; \textrm{iterations}\,.
	\end{align}
\end{theorem}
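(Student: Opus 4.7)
The plan is to establish the contraction bound \eqref{eq:thm-main:cert} by strong induction on the horizon $n$, exploiting the recursive definition of the Silver Stepsize Schedule. Since $n$ is a power of $2$, the length-$n$ schedule is built by concatenating two copies of a shorter schedule separated by a ``long'' hedging step whose size is governed by the silver ratio $\rho = 1+\sqrt{2}$. For the base case $n=1$, I would use the classical one-step analysis with the single-step optimal stepsize $2/(1+\kappa)$, which recovers the textbook contraction $((\kappa-1)/(\kappa+1))^2$; this serves as the seed rate $\tau_1$ feeding into the recursion.

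For the inductive step I would work within the performance-estimation framework. Each pair of iterates $(x_i,x_j)$ on the trajectory, together with the associated gradients $\nabla f(x_i), \nabla f(x_j)$, yields a quadratic interpolation inequality that encodes $1$-strong convexity and $\kappa$-smoothness of $f$. The target inequality $\|x_n - x^*\|^2 \leq \tau_n \|x_0-x^*\|^2$ is certified by exhibiting explicit nonnegative multipliers $\lambda_{ij}$ whose corresponding weighted sum, after substituting the GD update $x_{t+1} = x_t - \alpha_t \nabla f(x_t)$, telescopes into the desired quadratic inequality in $\|x_0-x^*\|^2$ and $\|x_n - x^*\|^2$. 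The multipliers should mirror the fractal structure of the schedule: a length-$n$ certificate should be assembled from two shifted copies of a length-$(n/2)$ certificate plus additional multipliers that couple iterates across the middle hedging step.

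The main obstacle is exactly this cross-coupling step. A naive argument that applies the inductive hypothesis separately on each half and multiplies the two contractions is provably too lossy---it cannot beat the constant-stepsize rate $\Theta(\kappa)$, as emphasized in the introduction. The technical heart of the proof is the gluing operation: to the union of the two half-certificates one must add a carefully chosen set of long-range interpolation constraints linking iterates in the first half to iterates in the second. These are the ``long-range consistency conditions along the trajectory'' highlighted in the abstract. Concretely, one must verify that the combined multipliers remain nonnegative and that all cross-terms involving the unknown gradients cancel, leaving only an inequality in the two endpoint distances. The hedging intuition makes an explicit appearance at this stage: the worst-case function for a short half-schedule is misaligned with the worst-case function for the long middle step, so the slack in one set of interpolation inequalities pays for the deficit in another, yielding an improvement factor governed by $\rho$ at every recursion level.

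Once the recursion is established, unrolling it produces the closed-form bound on $\tau_n$. In the acceleration regime $n \leq n^*$ the recursion accumulates super-exponentially, giving $\log(1/\tau_n) = \Theta(n^{\log_2 \rho}/\kappa)$; at the phase transition $n = n^* = \Theta(\kappa^{\log_\rho 2})$ this reduces to $\Omega(1)$ (using the identity $\log_\rho 2 \cdot \log_2 \rho = 1$), so a single period of the Silver Schedule provides a constant-factor contraction. The saturation regime $n > n^*$ then follows immediately by chaining $\lfloor n/n^* \rfloor$ copies of the length-$n^*$ schedule, each contributing an $\Omega(1)$ contraction, which gives $\log(1/\tau_n) = \Theta(n/n^*)$. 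The final complexity statement $n = \Theta(\kappa^{\log_\rho 2}\log(1/\eps))$ is read off the saturation regime. I expect the gluing and the nonnegativity check of the combined dual certificate to be by far the most delicate part; everything else is either classical (the base case) or algebraic bookkeeping (unrolling the recursion).
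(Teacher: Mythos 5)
Your proposal follows the paper's approach: a classical base case at $n=1$, an inductive PEP-style certificate for the $2n$-step schedule built by recursively gluing two copies of the $n$-step certificate plus corrections coupling the two halves, and a two-regime rate analysis separating acceleration from saturation. The only slight imprecision is that the paper establishes the saturation regime via the rate-monotonicity inequality $\tau_{2n} \le \tau_n^2$ for the actual Silver Schedule (derived from the local dynamics $1 - H(h) \approx (1-h)^2$) rather than by literally chaining period-$n^*$ blocks, but the resulting bound is the same.
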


\begin{figure}
    \centering
    \includegraphics[height=6cm]{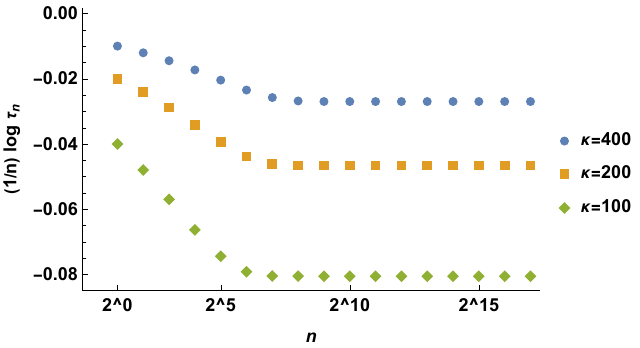}
    \caption{\footnotesize Log of the average per-step rate, aka $\tfrac{1}{n} \log \tau_n$,
    for varying condition numbers $\kappa$. The initial value is the unaccelerated rate 
    $(\tfrac{\kappa -1}{\kappa + 1})^2$.
    Notice the rate saturation phenomenon that occurs at $n=n^* \asymp \kappa^{\log_{\rho} 2}$. 
    	}
    \label{fig:ratesaturation}
\end{figure}

\subsubsection{Discussion of Silver Convergence Rate}\label{sssec:intro:dis:rate}

\paragraph*{Partial acceleration.} Our rate $\Theta(\kappa^{\log_{\rho} 2} \log 1/\eps)$ lies between the textbook rate $\Theta(\kappa \log1/\eps)$ for GD and the accelerated rate $\Theta(\sqrt{\kappa} \log1/\eps)$ due to Nesterov in 1983~\citep{nesterov-agd}. We emphasize that before the thesis~\citep{altschuler2018greed} that this paper is based upon, it was unknown if any improvement over the unaccelerated rate---even a constant factor---was achievable by any stepsize schedule. Our convergence rate is faster than all known GD stepsize schedules for convex optimization, including constant stepsize schedules, Polyak-type schedules, Barzilai-Borwein-type schedules, Goldstein-Armijo-type schedules, exact line search, etc.

\paragraph*{Phase transition.} 
A distinctive feature of the Silver Convergence Rate $\tau_n$ is that it undergoes a phase transition: $\tau_n$ switches from super-exponential to exponential in the horizon $n$. This transition occurs at $n^* \asymp \kappa^{\log_{\rho} 2}$, which is the number of iterations required to make the error decrease by a constant factor. See Figure~\ref{fig:ratesaturation}. The reason for these two regimes is that beyond $n^*$, the new stepsizes converge quadratically fast to their stationary value; details in \S\ref{sec:rate}. 
\begin{itemize}
	\item \underline{Acceleration regime.} This regime encapsulates the advantage of multi-step descent: the super-exponentiality of the $n$-step bound makes it better than composing the $1$-step bound $n$ times. This super-exponential regime interpolates the $\kappa$ dependence between the unaccelerated rate (achieved at $n = 1$) and our partially accelerated rate (achieved at $n \gtrsim n^*$).
	\item \underline{Saturation regime.} Here, the benefit of multi-step descent becomes negligible: $\tau_{2n} \approx \tau_n^2$ for $n \geq n^*$.\footnote{Note that $\tau_{2n} \leq \tau_n^2$; intuitively this amounts to the statement that the optimal $2n$-step schedule is at least as good as repeating the optimal $n$-step schedule twice. We call this inequality \emph{rate monotonicity}, see \S\ref{sec:rate}. The statement $\tau_{2n} \approx \tau_n^2$ therefore states that this bound is nearly tight.} Briefly, this rate saturation occurs because the Silver Stepsize Schedule is approximately periodic with period $n^*$, see \S\ref{sssec:intro:dis:schedule}. 
\end{itemize}

\paragraph*{Dimension independence.} The convergence rate in Theorem~\ref{thm:main} is independent of the dimension $d$ and thus can be extended to infinite-dimensional Hilbert space. This is because our analysis only uses consistency conditions for the GD trajectory to arise from a convex function---and these consistency conditions are dimension-independent~\citep{rockafellar,pesto}. This is in common with classical analyses of GD and Nesterov-style acceleration.

\paragraph*{Optimality.} We conjecture the Silver Stepsize Schedule has the fastest convergence rate among all possible choices of GD stepsize schedules. We prove optimality for the $n=2$ case of~\citep[Chapter 8]{altschuler2018greed} in \S\ref{sec:hedging}; this proof readily extends to small $n$, and we will address the question of optimality for all $n$ in a shortly forthcoming paper.

\subsubsection{Discussion of Silver Stepsize Schedule}\label{sssec:intro:dis:schedule}

\paragraph*{Recursive construction.} The Silver Stepsize Schedule is defined recursively in a fully explicit way. We briefly overview the construction; see \S\ref{sec:construction} for full details.
The $1$-step schedule $h^{(1)}$ is initialized to the constant $\bar{\alpha} = 2/(1 + 1/\kappa)$ that is classically known to be optimal for $1$-step descent. We then recursively define the $2n$-step schedule $h^{(2n)}$ as
\begin{align}
	h^{(2n)} := [ \tilde{h}^{(n)}, a_{2n}, \tilde{h}^{(n)}, b_{2n}]\,,
\end{align}
where $\tilde{h}^{(n)}$ is the $n$-step schedule $h^{(n)}$ with its final stepsize $b_n$ removed, and $a_{2n}$ and $b_{2n}$ are obtained by ``splitting'' this removed stepsize $b_n$. Modulo a certain normalizing transformation, this splitting produces $a_{2n} < b_n < b_{2n}$ as the roots to a certain quadratic equation in $b_n$. See \S\ref{sec:rate} for details and closed-form expressions.

\paragraph*{Finite-horizon schedule.} This recursive construction produces (normalized) stepsize schedules that follow the pattern
\begin{align*}
	h^{(1)} &= [b_1] \\
	h^{(2)} &= [a_2,b_2] \\
	h^{(4)} &= [a_2,a_4,a_2,b_4] \\
	h^{(8)} &= [a_2,a_4,a_2,a_8,a_2,a_4,a_2,b_8]
\end{align*}
See Figure~\ref{fig:stepsizes} for a visualization. 

\paragraph*{Infinite-horizon schedule.} This schedule simplifies in the limit $n \to \infty$: the $i$-th normalized stepsize is given by $a_{B(i)}$, where $B(i)$ denotes the smallest power of $2$ in the binary expansion of $i$. Note that no entries of the $b$ sequence appear.

\paragraph*{Fractal order.} For the special case of quadratic optimization, the order of the stepsizes is well-known to be irrelevant for the convergence rate. In contrast, in the general setting of convex optimization, the order of the stepsizes provably does matter~\citep[Chapter 8]{altschuler2018greed}. For example, it can be shown that the convergence rate in Theorem~\ref{thm:main} becomes greater than $1$ (i.e., not even contractive) if one reverses the order of the $2$-step Silver Stepsize Schedule. 
\par The Silver Stepsize Schedule generates a fractal, see Figure~\ref{fig:stepsizes}. This is due to our recursive construction, and is directly evident from the aforementioned fact that the $i$-th stepsize depends on the sparsity pattern of the binary expansion of $i$. This fractal structure aligns with the numerical observations in~\citep{gupta22,grimmer23}, and is in stark contrast with all classical stepsize schedules which, if time-varying, decay monotonically in the iteration number $i$, e.g., as $1/i$.

\paragraph*{Approximate periodicity.} The Silver Stepsize Schedule is not periodic as it is continually changes. However, it is approximately periodic with period $n^* \asymp \kappa^{\log_{\rho} 2}$, see Figure~\ref{fig:stepsizes}. This is another facet of the rate saturation phenomenon discussed in \S\ref{sssec:intro:dis:rate}. See \S\ref{sec:rate} for details.

\paragraph*{Dependence on horizon.} Theorem~\ref{thm:main} is stated for horizons $n$ that are powers of $2$. For arbitrary integers $n$, one can simply run the Silver Stepsize Schedule for the largest power of $2$ below $n$, or better, run for all powers of $2$ in the binary expansion of $n$. This affects the average per-step-rate by only a small constant factor. We moreover conjecture that simply using $n$ steps of the infinite-horizon Silver Stepsize Schedule leads to the same convergence rate modulo a lower-order term. This seems reasonable since only logarithmically many stepsizes are changed, but we have not attempted to prove this. Orthogonally, if the horizon is not set in advance, then one can, e.g., do a ``doubling'' trick by exploiting the fact that the first $2^{i}-1$ stepsizes are identical for all $n \geq 2^i$. Specifically, for each $i$, decide on iteration $2^{i} - 1$ whether to stop at $n=2^i$ iterations, or repeat roughly the same amount of effort and go to $2^{i+1}-1$ iterations.

\subsubsection{Discussion of problem setting}\label{sssec:intro:dis:setting}

\paragraph*{Progress measure.} Theorem~\ref{thm:rate} uses distance as the progress measure. This can be replaced by other standard progress measures such as function suboptimality or gradient norm, in the initial or final condition or both, since these measures are equivalent for $\kappa$-conditioned functions. This black-box replacement affects the rate by only a lower-order term. Moreover, this equivalence factor can be avoided by re-doing our analysis in a conceptually identical way for the desired progress measures (possibly also with minor changes to the stepsize schedule; e.g., for gradient norm contraction, it appears that one should reverse the order~\citep[Chapter 8]{altschuler2018greed}).

\paragraph*{Smoothness.} It is well-known that smoothness is required for acceleration: otherwise, GD cannot be accelerated even with momentum or other internal dynamics~\citep[Chapter 3]{nesterov-survey}.

\paragraph*{Convexity.} Theorem~\ref{thm:main} is stated for the strongly convex setting, but this can be relaxed to the non-strongly convex setting. Indeed, all our core conceptual ideas extend: the advantage of time-varying, non-monotonic stepsizes, proving this advantage via multi-step descent rather than iterating the greedy $1$-step bound, certifying multi-step descent via recursive gluing, etc. The adaptation requires only minor technical modifications to the stepsize schedule, certificate recursion, and progress measure. These details will appear in a shortly forthcoming paper.

We mention that by standard black-box reductions (see e.g.,~\citep{allen2016optimal} or~\citep[page 285]{bubeck-book}), Theorem~\ref{thm:main} immediately implies accelerated rates for the (non-strongly) convex setting by running GD with the Silver Stepsize Schedule on a quadratically regularized objective, i.e., $f(\cdot) + \delta\| \cdot- y\|^2$ for appropriate choices of $\delta$ and $y$. This gives an analogous partially accelerated rate of $\eps^{-\log \rho_2} \approx \eps^{-0.7864}$ iterations to obtain $\eps$ function suboptimality. This is intermediate between the textbook unaccelerated rate $\Theta(\eps^{-1})$ and Nesterov's accelerated rate $\Theta(\eps^{-1/2})$ from 1983~\citep{nesterov-agd}. This strongly suggests that acceleration in the (non-strongly) convex case surpasses the $\Theta(1/(T \log T))$ conjecture in~\citep{grimmer23}. The aforementioned forthcoming paper will address this via a direct analysis that bypasses regularization.

\subsection{Related work}\label{ssec:intro:related}

\subsubsection{The special case of quadratic optimization}\label{sssec:intro:quadratic}

\paragraph*{Three equivalent approaches to acceleration.} For quadratic optimization, the GD map becomes linear, which enables three equivalent approaches to acceleration. One approach, taken by Young in 1953 is to choose non-constant stepsizes that are the inverses of the roots of Chebyshev polynomials~\citep{young53}. A second approach is to use momentum, achieved for example by Hestenes and Stiefel's Conjugate Gradient Method in 1952~\citep{hestenes1952methods} and Polyak's Heavy Ball Method in 1964~\citep{polyak1964some}. This equivalence arises because momentum amounts to a three-term recurrence, which if the coefficients are chosen appropriately, generates the same sequence of Chebyshev polynomials; see e.g.~\citep[Ch. 5]{Varga}. A third approach is to use the limiting distribution of the roots of the Chebyshev polynomials: the arcsine distribution~\citep{kalousek, pronzato11, pronzato13,altschuler2018greed}. This equivalence is due to the fact that the order of stepsizes does not affect convergence in the quadratic case, thus as the horizon $n \to \infty$, one might as well draw stepsizes i.i.d.\,from the equilibrium measure. It is important to emphasize that the elegant equivalences between these three approaches---varying stepsizes, momentum, and equilibrium measures---breaks down beyond the special case of quadratic optimization.

\paragraph*{Desiderata beyond fast convergence.} The above discussion concerns only the convergence rate, not stability. In settings with noisy gradients or inexact arithmetic, the order of the stepsizes may significantly affect the convergence rate of GD, even for quadratic optimization. 
This question of stability to roundoff errors was already raised in Young's original paper~\citep{young53}. In such settings, it is desirable to find permutations of the Chebyshev roots for which GD trajectories are maximally stable.  An effective approach is to interleave the roots of Chebyshev polynomials of increasing degree \citep{LebedevFinogenov71}. This leads to a fractal pattern, superficially similar to our proposed stepsize schedule; see \citep{AgarwalGoelZhang} for a recent discussion and additional results. However, we emphasize that this fractal is not only fundamentally different but also arises due to entirely different considerations---stability rather than fast convergence. 

\paragraph*{Structured quadratics.} If the quadratic function's Hessian has additional spectral structure, then improved results are possible. This is because the different viewpoints discussed above are classically known to extend to this situation via potential theory; see the excellent survey~\citep{6steps} and the references within. This enables further refinements of the methods described above for structured quadratics and sometimes also perturbations away from quadratics; see e.g.,~\citep{oymak2021provable,goujaud2022super}.

\subsubsection{The general case of convex optimization}\label{sssec:intro:convex}

\paragraph*{Unaccelerated GD.} For constant stepsize, the optimal convergence rate for GD is $\Theta(\kappa \log 1/\eps)$ in the strongly convex setting, and $\Theta(1/\eps)$ in the convex setting; see, e.g., the textbooks~\citep{bubeck-book,boyd,nocedal,polyakbook,hazan2016introduction,nesterov-survey}. This is often called the unaccelerated rate for GD. Many alternative stepsize schedules have been proposed in both theory and practice. 
We highlight several well-studied schedules. 
One family of well-studied strategies adaptively chooses stepsizes either by minimizing the function value over the line spanned by the gradient. This mininimization can be performed exactly via line search~\citep{nocedal,boyd,polyakbook,de2017worst}, or approximately via Goldstein-Armijo-type schedules~\citep{nesterov-survey}. Alternatively, it can be done by minimizing the estimated distance to the optimum via Polyak-type schedules~\citep{polyakbook}. Another family is Barzilai-Borwein-type schedules, which are quasi-Newton methods that approximate the Hessian using the past step's change in iterate and gradient \citep{barzilai1988two}. None of these strategies are known to accelerate beyond the case of quadratics.

\paragraph*{Accelerated GD via internal state.} The conventional approach for achieving faster convergence is to consider variations of GD that use auxiliary sequences of iterates and/or different update directions than the gradient. This is of course more powerful than just changing the stepsizes, and can be interpreted from a control theory perspective as adding internal dynamics to the algorithm. Accelerated rates were first shown in Nesterov's seminal work in 1983~\citep{nesterov-agd}, and since then, many other accelerated algorithms and analyses have been proposed~\citep{geometric-gd,quad-averaging, tmm,diakonikolas2017accelerated,taylor23optimalub,cohen2020relative,linear-coupling,beck2009fast,tseng2008accelerated}, as well as fruitful interpretations via continuous-time analysis~\citep{shi2021understanding, su2016differential, shi2019acceleration, WWC16, WRJ16, moucer2022systematic,diakonikolas2021generalized}. These accelerated algorithms require only $\Theta(\sqrt{\kappa} \log1/\eps)$ iterations, or $\Theta(1/\sqrt{\eps})$ in the convex case, which is known to be minimax-optimal up to a constant for any algorithm that uses only gradient information~\citep{nem-yudin}. Much work has recently sharpened this constant, culminating in exactly matching upper and lower bounds~\citep{taylor23optimalub,taylor23optimallb}. This recent line work exploits the idea that the worst-case convergence of optimization algorithms can be numerically computed via semidefinite programming (SDP)~\citep{KF16, DT14, pesto,d2021acceleration,taylor2017convex,tmm,LRP16,DT18,KF17}. 
This has also enabled using computer-automated SDP-analyses to investigate richer classes of algorithms, such as robust versions of accelerated methods~\citep{CHVSL17}, proximal algorithms~\citep{barre2023principled}, operator splitting~\citep{ryu2020operator}, line search~\citep{de2017worst}, biased stochastic gradient methods~\citep{HSL17}, inexact Newton's method~\citep{de2020worst}, among many others. This area of research is extremely active and we refer the reader to the excellent recent survey~\citep{d2021acceleration} for a comprehensive set of references and a detailed historical account.

\paragraph*{Accelerated GD via dynamic stepsizes.} Although many time-varying stepsize schedules have been considered for GD, no convergences analyses improved over the textbook unaccelerated rate beyond the quadratic case. In 2018, Altschuler's MS thesis~\citep{altschuler2018greed} 
considered time-varying stepsize schedules in several settings,
all through the unifying lens of hedging and multi-step descent. In Chapter 8 of the thesis, the PESTO framework was used to show for the first time the advantage of using time-varying stepsize schedules for GD beyond the quadratic setting. Explicit solutions were given for $n=2,3$ in the strongly convex setting. 
This showed that a constant-factor improvement over the textbook unaccelerated GD rate was indeed possible. 
A key difficulty in extending this to larger horizons $n$ is that the search for optimal stepsizes is non-convex. In 2022,~\citet{gupta22} combined Branch \& Bound techniques with the PESTO SDP to develop algorithms that perform this search numerically, and as an example used this to compute good approximate schedules in the convex setting for larger values of $n$ up to $50$.~\citet{grimmer23} very recently developed a technique to round these Branch \& Bound solutions to exact rational certificates. This allowed him to extend these approximate stepsize schedules up to $n=127$ in order to get a larger constant-factor improvement, and conjectured that dynamic stepsizes might lead to an accelerated rate of $O(1/(T\log T))$. By extending a recursive application of the $2$-step solution in~\citep{altschuler2018greed}, the present paper rigorously proves acceleration for all horizons $n$, and in particular obtains the first asymptotic improvements over the textbook unaccelerated GD rate---not just by a constant factor.

\subsection{Organization}\label{ssec:intro:org}

In \S\ref{sec:hedging}, we provide an overview of the core conceptual ideas via the key case $n=2$.~\S\ref{sec:construction} formally defines the Silver Stepsize Schedule and the Silver Convergence Rate $\tau_n$, \S\ref{sec:rate} establishes the claimed properties of $\tau_n$, and \S\ref{sec:cert} proves that $\tau_n$ is a valid bound on the convergence rate of the Silver Stepsize Schedule.~\S\ref{sec:future} discusses future directions. Some technical details are deferred to the Appendix.

\section{Conceptual overview: two-step case ($n=2$)}\label{sec:hedging}

This section provides a complete analysis for the minimal non-trivial horizon length: $n=2$. (No hedging can occur if $n=1$.) Our goal here is to provide further intuition for the core concepts of hedging and multi-step descent, and explain concretely how these manifest in the design and analysis of the Silver Stepsize Schedule. Indeed, the $n=2$ case captures most of the core intuition and ideas, and the result for general $n$ is essentially just an amped-up version thereof. These results first appeared in Altschuler's thesis~\citep[Chapter 8]{altschuler2018greed}; we refer to there for a lengthier treatment.

For simplicity, in this section we denote the stepsizes by $\alpha$ and $\beta$, so that the algorithm is 
\[
x_1 = x_0 - \alpha \nabla f(x_0), \qquad
x_2 = x_1 - \beta \nabla f(x_1)\,,
\]
and the worst-case convergence rate over a function class $\cF$ is
\[
R(\alpha,\beta; \cF) := \sup_{f \in \cF,\; x_0 \neq x^*} \frac{\|x_2 - x^*\|}{\|x_0 - x^*\|}\,.
\]
The question of optimal stepsizes is therefore the minimax problem
\begin{align}
	\min_{\alpha,\beta} R(\alpha,\beta; \cF)\,.
	\label{eq:plausibility:minimax}
\end{align}
To motivate why non-constant stepsizes might be helpful, in \S\ref{ssec:hedging:quad} we first briefly recall the classical result of~\citep{young53} which solves this for the case of quadratic $\cF$.
Then in \S\ref{ssec:hedging:convex}, we solve this problem for convex $\cF$ by presenting the $2$-step Silver Stepsize Schedule from~\citep[Theorem 8.11]{altschuler2018greed}, proving its convergence rate via multi-step descent, and proving its optimality via hedging.

\begin{figure}[t]
	\centering
	\subcaptionbox{\footnotesize Quadratic setting: $(\alpha^*,\beta^*)$ are the two permutations of $\{1.12339, 2.77905 \}$.
		Details in \S\ref{ssec:hedging:quad}.}
	{\includegraphics[width=0.46\textwidth]{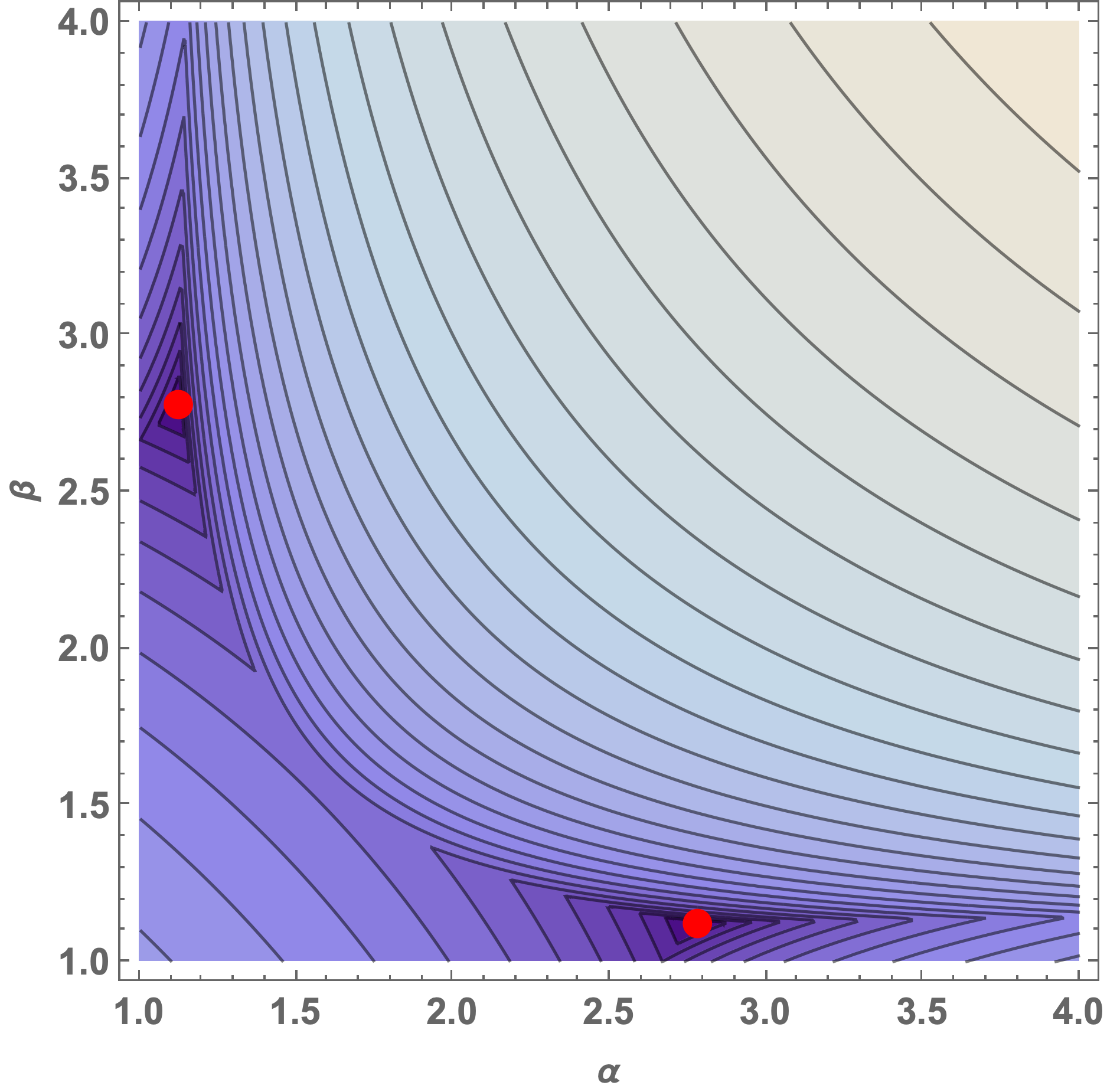}} \hfill
	\subcaptionbox{\footnotesize Convex setting: $(\alpha^*,\beta^*) = (\frac{4}{3},2)$. Details in \S\ref{ssec:hedging:convex}.}
	{\includegraphics[width=0.46\textwidth]{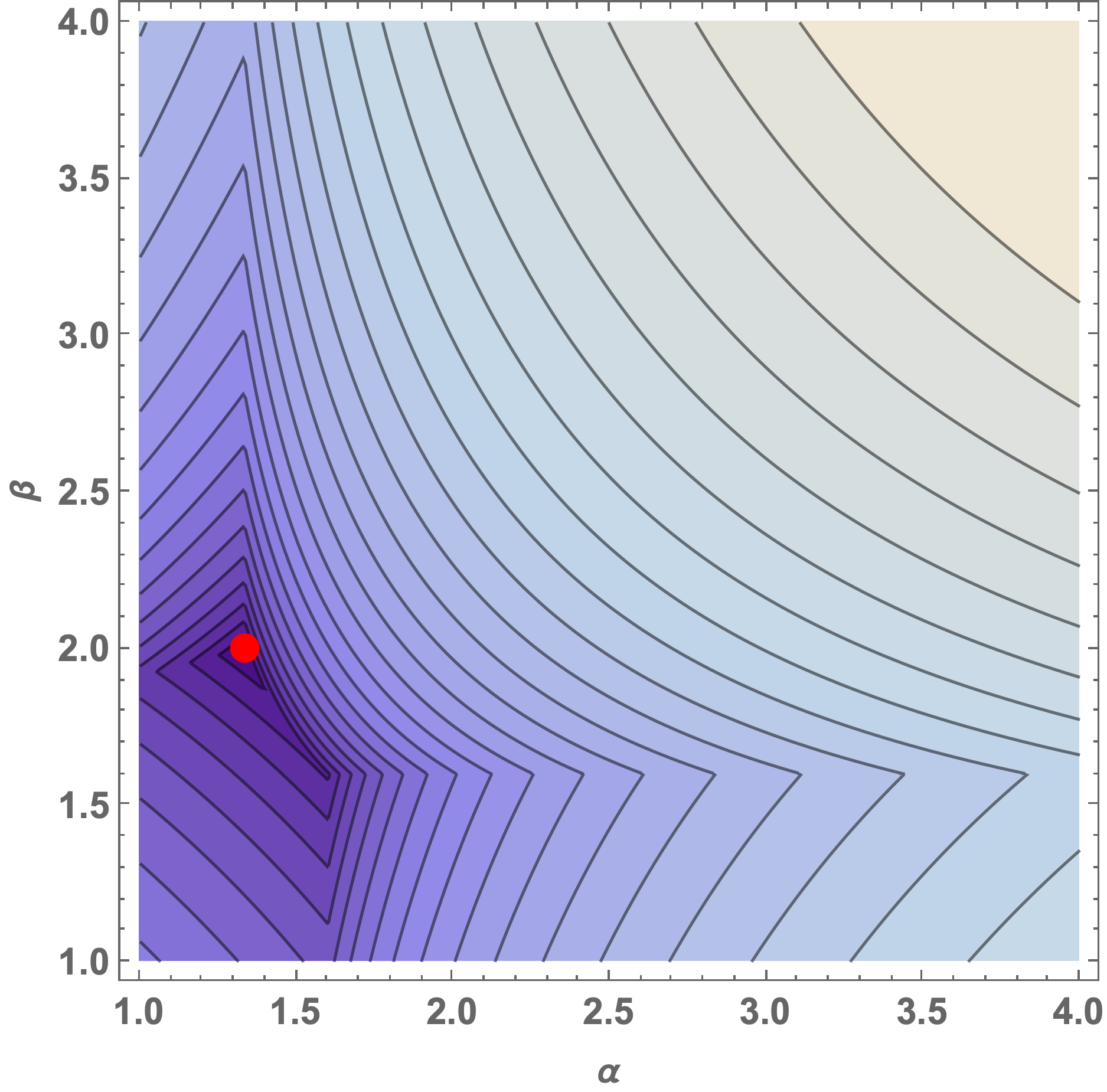}}
	\caption{\footnotesize Contour plots of worst-case rates, as a function of the two stepsizes $\alpha$ and $\beta$, for $m=1/4$ and $M=1$. The marked points indicate the global minima. Notice the asymmetry in the convex case (right), due to the non-commutativity of the GD map.}
	\label{fig:2step}
\end{figure}

\subsection{Optimal stepsizes for quadratic optimization}\label{ssec:hedging:quad}

\paragraph*{Young's argument from 1953.} What is the optimal stepsize schedule $(\alpha,\beta)$ for the class $\cF$ of quadratic functions $f$ that are $m$-strongly convex and $M$-smooth?
Without loss of generality after translating, $f(x) = \frac{1}{2}x^T Hx $ where $mI \preceq H \preceq MI$. By definition of GD, $x_1 = (1 - \alpha H)x_0$ and $x_2 = (1 - \beta H)x_1$, thus
\[
	x_2 = p(H)x_0\,,\qquad \text{ where } \qquad p(H) =(1 - \alpha H)(1 - \beta H).
\]
Observe that as one ranges over all possible choices of the stepsizes $(\alpha,\beta)$, the polynomial $p$ ranges over the set $\cP$ of all degree $2$ polynomials satisfying the normalizing condition $p(0) = 1$. 
Therefore finding optimal stepsizes $(\alpha,\beta)$ is equivalent to finding an optimal polynomial $p \in \cP$. 

\par What is the optimal polynomial? By the above display and properties of the spectral norm,
\begin{align}
	R(\alpha,\beta;\cF)
	=
	\sup_{mI \preceq H \preceq MI, \, x_0 \neq 0} \frac{\|p(H)x_0\|}{\|x_0\|}
	=
	\sup_{mI \preceq H \preceq MI}
	\|p(H)\|
	=
	\sup_{m \leq \lambda \leq M} 
	|p(\lambda)|\,.
	\label{eq:plausibility:quad-pf}
\end{align}
Thus the optimal polynomial $p \in P_2$ is the one with minimal $L_{\infty}$ norm over the interval $[m,M]$. It is classically known that this is the (translated and scaled) Chebyshev polynomial of the first kind, see e.g.,~\citep{Rivlin}. Thus the optimal stepsizes $(\alpha^*, \beta^*)$  are the inverses of the roots $\tfrac{M+m}{2} \plusminus \tfrac{M-m}{2\sqrt{2}}$
of the Chebyshev polynomial, in either order. These are the symmetric marked points in Figure~\ref{fig:2step}, left. 

\par Crucially, observe that these two stepsizes are different---hence the advantage of non-constant schedules in the quadratic setting. We now interpret this phenomenon in two ways that are essential to our intuition for the convex setting. This discussion is based on~\citep[Chapters 1 and 2]{altschuler2018greed}.

\paragraph*{Interpretation via hedging.} Why is $\bar{\alpha} := \tfrac{2}{M+m}$ suboptimal for $2$ steps of GD when it is optimal for $1$? Recall that it is optimal for $1$ step because GD overshoots when using a longer step $\alpha > \bar{\alpha}$ on the sharp function $f(x) = \tfrac{M}{2}x^2$, and undershoots when using a shorter step $\alpha < \bar{\alpha}$ on the shallow function $f(x) = \tfrac{m}{2}x^2$. The algorithmic opportunity is that these worst-case functions are different for short-step GD and long-step GD. This is why using a short step and a long step---each individually suboptimal---can lead to faster overall convergence than using $\bar{\alpha}$ twice. We refer to this misalignment of worst-case functions as \emph{hedging}. See Figure~\ref{fig:2step}, left.

\paragraph*{The necessity of multi-step descent.} There is a dual interpretation of hedging via multi-step descent.
By~\eqref{eq:plausibility:quad-pf}, the worst-case rate for $2$ steps is
\begin{align}
	R(\alpha,\beta;\cF)
	=
	\sup_{m \leq \lambda \leq M} |(1 - \alpha \lambda)( 1- \beta \lambda)|\,.
	\label{eq:plausibility:quad-2step}
\end{align} 
Contrast this with the greedy analysis, which bounds the worst-case rate after $2$ iterations by the product of the worst-case rates for $1$ step with $\alpha$ or $\beta$, namely
\begin{align}
	R(\alpha;\cF) \cdot R(\beta;\cF)
	=
	\left( \sup_{m \leq \lambda_{\alpha} \leq M} |1 - \alpha \lambda_{\alpha}| \right) \cdot \left( \sup_{m \leq \lambda_{\beta}\leq M} |1 - \beta \lambda_{\beta}| \right) 
	\,.
	\label{eq:plausibility:quad-1step}
\end{align}
Observe that the greedy analysis~\eqref{eq:plausibility:quad-1step} is so shortsighted that it not only leads to worse bounds for any given stepsize schedule, but moreover leads to the wrong prescription of stepsizes. Indeed, optimizing this convergence rate~\eqref{eq:plausibility:quad-1step} over $(\alpha,\beta)$ leads to $\alpha = \beta = \tfrac{2}{M+m}$ which is the constant schedule. This necessity of multi-step descent explains why the mainstream approach for convex optimization is constant stepsizes: previous approaches were unable to analyze multi-step descent. (This is only tractable in the quadratic setting because the gradient operator is linear, see~\eqref{eq:plausibility:quad-pf}.)

\subsection{Optimal stepsizes for convex optimization}\label{ssec:hedging:convex}

We now turn to the convex setting. Let $\cF$ denote the set of $m$-strongly convex and $M$-smooth functions. Young's Chebyshev schedule is then provably bad\footnote{This is not just a failure of analysis techniques: even for mild condition numbers like $\kappa = 10$, using the $2$-step Chebyshev Schedule in either order makes GD divergent (i.e., the contraction rate is larger than $1$). We are not aware of a reference for this, but it can be shown e.g., by using the SDP-analysis framework of~\citep{pesto}.}.What are the optimal $2$ stepsizes? Certainly the above discussion of hedging motivates using non-constant stepsizes, but proving this requires multi-step descent, and that has been the longstanding stumbling block preventing progress beyond the quadratic setting.

\subsubsection{Silver Stepsize Schedule for $n=2$}

We show below that the $2$-step convergence rate $ R(\alpha,\beta; \cF)$ is minimized by the stepsizes $(\alpha,\beta)$ that are defined by the system of equations
\begin{equation}
	(M\alpha - 1)(M\beta - 1) = (1 -\alpha m)(1-\beta m) = \frac{(1-m\alpha)(M\beta - 1)}{1+\alpha(M-m)}\,
	\label{eq:twostepsizes}
\end{equation}
and moreover the optimal $2$-step convergence rate $R^*$ is given by this equalized value.

\begin{remark}\label{rem:2step-values}
	The equations~\eqref{eq:twostepsizes} can be solved explicitly, to give the alternative expressions 
	\[
	\alpha^* = \frac{2}{m+S}, \quad
	\beta^* = \frac{2}{2M+m-S}, \quad
	R^* = \frac{S - M}{2 m + S - M},
	\]
	where $S = \sqrt{M^2 + (M-m)^2}$. These are the formulas given in \citep[Thm. 8.10]{altschuler2018greed}, and is the $n=2$ case of the Silver Stepsize Schedule and (square-rooted) Silver Convergence Rate defined in \S\ref{sec:rate}. 
\end{remark}

This $n=2$ solution showcases the key phenomena that also occur for larger $n$:
\begin{itemize}
	\item \textbf{Provable advantage of dynamic stepsizes.} Since $R^* < (\tfrac{M-m}{M+m})^2$, this proves that it is possible to improve over standard GD by dynamically changing the stepsize. (Recall that $\tfrac{M-m}{M+m}$ is the textbook unaccelerated rate for 1 step of GD.) This mirrors how for quadratics, the optimal 2-step rate~\eqref{eq:plausibility:quad-2step} is better than the squared optimal $1$-step rate~\eqref{eq:plausibility:quad-1step}.
	\item \textbf{Stepsize splitting.} Since $\alpha^* < \frac{2}{M+m} < \beta^*$, the optimal stepsize $\frac{2}{M+m}$ for $n=1$ splits into a short step $\alpha^*$ and long step $\beta^*$. For general $n$, the Silver Stepsize Schedule mirrors this splitting at every scale: it splits the largest stepsize into a shorter and longer step.
	\item \textbf{Unique, asymmetric solution.} Unlike the quadratic case, here the stepsize order is essential for fast convergence: the splitting requires the small stepsize to be first.\footnote{We remark that the order may change for different progress measures, see~\citep[Chapter 8.2]{altschuler2018greed}.} As a consequence, here the optimal stepsize schedule is unique. See Figure~\ref{fig:2step}, right. 
	\item \textbf{Milder splitting.} Even ignoring order, the stepsize values differ from the quadratic case. This occurs because the class of convex functions is richer than the class of quadratics, thus the supremum defining the worst-case rate $R(\alpha,\beta; \cF)$ is over more functions, thus it is harder to misalign the worst-cases by hedging. The result is less aggressive hedging and partial acceleration: the improvement over the $1$-step rate is smaller than in the quadratic case. 
\end{itemize}

We now turn to proving that Theorem~\ref{thm:main} holds in the case $n=2$, and moreover that the proposed Silver Stepsize Schedule is optimal among all $2$-step schedules. 

\begin{theorem}[Optimal $2$-step schedule for strongly convex optimization, Theorem 8.11 of~\citep{altschuler2018greed}]\label{thm:2step:convex}
	Consider any strong-convexity and smoothness parameters $0 < m \leq M < \infty$. The unique optimal $2$-step schedule $(\alpha^*,\beta^*) \in \argmin_{\alpha,\beta} R(\alpha,\beta; \cF)$ and the corresponding optimal $2$-step rate $R^*$ are as stated in Remark~\ref{rem:2step-values}.
\end{theorem}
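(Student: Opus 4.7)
The plan is to prove that the pair $(\alpha^*, \beta^*)$ defined implicitly by~\eqref{eq:twostepsizes} (equivalently, by the closed forms in Remark~\ref{rem:2step-values}) is both feasible for the bound $R^*$ and is the unique minimizer of the minimax problem~\eqref{eq:plausibility:minimax}. The guiding principle is that the three equalities in~\eqref{eq:twostepsizes} equate the worst-case contraction rates attained by three specific hard instances: with only two free parameters $(\alpha,\beta)$, balancing three worst cases is precisely the signature of a saddle-point optimum, since each binding instance blocks local improvement in a distinct direction. So the argument has three natural pieces: (i) upper bound via a certificate, (ii) lower bound via three witnesses, and (iii) uniqueness via equation-counting.

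For the upper bound $R(\alpha^*, \beta^*; \cF) \leq R^*$, I would use the Performance Estimation / interpolation framework of~\citep{pesto,taylor2017convex}: any iterates $x_0, x_1, x_2$ arising from GD on some $f \in \cF$, together with the minimizer $x^*$, must satisfy a finite family of Taylor-type quadratic inequalities among the function values and gradients at those four points. The worst-case squared ratio $\|x_2 - x^*\|^2/\|x_0 - x^*\|^2$ is then the value of a semidefinite program. I would exhibit a dual certificate: a nonnegative combination of the interpolation inequalities which, after substituting the GD update rule with stepsizes $\alpha^*, \beta^*$, collapses algebraically to $\|x_2 - x^*\|^2 \leq (R^*)^2 \|x_0 - x^*\|^2$. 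The multipliers will naturally be exactly those that make~\eqref{eq:twostepsizes} the KKT stationarity conditions of the min--max SDP.

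For the matching lower bound and optimality, I would produce three saturating hard instances, one for each term of~\eqref{eq:twostepsizes}. The first two are the one-dimensional sharp and shallow quadratics $f(x) = \tfrac{M}{2}x^2$ and $f(x) = \tfrac{m}{2}x^2$, which realize rates $(M\alpha - 1)(M\beta - 1)$ and $(1 - m\alpha)(1 - m\beta)$, respectively (the first under the regime $\alpha,\beta > 1/M$ that is easily checked to hold at the candidate solution). The third instance must be a genuinely two-dimensional construction whose gradients $\nabla f(x_0)$ and $\nabla f(x_1)$ point in misaligned directions, arranged so that the first GD step contracts by a shallow factor $1 - m\alpha$ while the second step contracts by a sharp overshoot factor $M\beta - 1$; the denominator $1 + \alpha(M - m)$ then falls out of how $\|x_1 - x^*\|^2$ relates to $\|x_0 - x^*\|^2$ under this reorientation. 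With these three witnesses fixed, optimality follows: any perturbation of $(\alpha^*,\beta^*)$ strictly worsens at least one of the three rates (since the Jacobian of the three expressions with respect to $(\alpha,\beta)$ has full row rank at the solution), and uniqueness is a count of three equations in the three unknowns $(\alpha,\beta,R)$ with a locally unique root in the relevant positive range.

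The main obstacle I anticipate is identifying the third, ``mixed'' worst-case instance and the corresponding dual multipliers. The two one-dimensional quadratic worst cases are classical, but the third requires a real multi-step adversarial construction---a function that is soft along $\nabla f(x_0)$ and stiff along $\nabla f(x_1)$---which is precisely the kind of hedged hard instance invisible to any one-step analysis (cf.~\eqref{eq:plausibility:quad-1step}). Locating the correct geometry of this instance (two-dimensional quadratic in orthogonal directions, or a piecewise-quadratic Huber-type function), and matching it to a dual certificate that glues the one-step interpolation inequalities into the tight two-step bound $R^*$, is the technical heart of the proof. It is also the step whose eventual generalization to larger horizons becomes the ``recursive gluing'' technique highlighted in the introduction.
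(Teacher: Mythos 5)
Your upper-bound plan---use the convex interpolation inequalities of~\citep{pesto}, then exhibit a nonnegative dual combination of the co-coercivities that telescopes to $\|x_2 - x^*\|^2 \leq (R^*)^2\|x_0 - x^*\|^2$---is exactly what the paper does, so that half is on track.

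The lower bound is where your proposal has a concrete gap, in the construction of the \emph{mixed} hard instance. You hedge between ``a two-dimensional quadratic in orthogonal directions'' and ``a piecewise-quadratic Huber-type function''; only the second leads anywhere. A diagonal quadratic $f(x,y) = \tfrac{m}{2}x^2 + \tfrac{M}{2}y^2$ has a GD map $\mathrm{diag}(1-m\alpha,1-M\alpha)\,\mathrm{diag}(1-m\beta,1-M\beta)$ whose worst-case contraction over initializations is just $\max\{|(1-m\alpha)(1-m\beta)|,\,|(1-M\alpha)(1-M\beta)|\}$---it reproduces the two one-dimensional quadratic witnesses and can never generate the rational rate $\tfrac{(1-m\alpha)(M\beta-1)}{1+\alpha(M-m)}$ with its nontrivial denominator. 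The paper's witness is one-dimensional and piecewise quadratic: $f''(x)=m$ for $x \geq \tfrac{1-m\alpha}{1+\alpha(M-m)}$ and $f''(x)=M$ otherwise. The denominator is not ``how $\|x_1-x^*\|^2$ relates to $\|x_0-x^*\|^2$ under a reorientation''---it is literally the location of the curvature breakpoint, chosen so that $x_1$ lands exactly there.

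Two further discrepancies you should note. First, the paper uses \emph{four} hard instances, not three: it also includes the piecewise-quadratic $f''(x)=M$ for $x\geq 0$, $f''(x)=m$ otherwise, yielding the rate $(M\alpha-1)(1-m\beta)$. At $(\alpha^*,\beta^*)$ this fourth term is slack, but it is needed so that $\underline{R}(\alpha,\beta) := \max$ of all four is a globally valid lower bound and its minimizer over the non-trivial square $[1/M,1/m]^2$ can be certified; with only your three witnesses, the ``$\alpha$ long, $\beta$ short'' corner is unpenalized and global uniqueness is not established. Second, your uniqueness argument (full row rank of a $3\times 2$ Jacobian plus ``equation counting'') is purely local; the paper verifies the global statement that $\min_{\alpha,\beta}\underline{R}(\alpha,\beta) = R^*$ is attained uniquely at $(\alpha^*,\beta^*)$ by case enumeration or quantifier elimination. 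You would need one of those, or another global argument, to close the proof.
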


The proof has two parts: an upper bound on $R(\alpha^*,\beta^*, \cF)$ that proves that our $2$-step schedule achieves the claimed rate, and a matching lower bound that proves optimality (and in fact uniqueness too).
 We do this below via multi-step descent and hedging, respectively.

\subsubsection{Upper bound: rate certification via multi-step descent}

As discussed above, in order to prove any benefit of deviating from the constant stepsizes, we must directly analyze the cumulative multi-step descent of all iterations. This requires capturing how different iterations affect other iterations' progress.
We do this by exploiting long-range consistency conditions between the information that GD sees along its trajectory.

\par Our starting point is a known result on convex interpolability, recalled next. There is a set of consistency conditions that any $f \in \cF$ must satisfy at any set of points $\{x_i\}_{i \in \cI}$: the co-coercivity
\begin{align}
	Q(x,y) := 2(M-m)(f(x) - f(y)) + 2\langle M \nabla f(y) - m \nabla f(x), y-x \rangle - \|\nabla f(x) - \nabla f(y)\|^2 - Mm\|x-y\|^2
\nonumber
\end{align}
must be non-negative for every pair of points $x,y \in \cI$. Of particular interest to us is the converse: there are consistency conditions on a set of data $\{(x_i, g_i, f_i)\}_{i \in \cI}$ that ensure it is $\cF$-interpolable, i.e., there exists $f \in \cF$ satisfying $g_i = \nabla f(x_i)$ and $f_i = f(x_i)$ for each $i \in \cI$. Specifically, a celebrated line of work on convex interpolability~\citep{rockafellar} culminated in a beautiful theorem of~\citep{pesto} which states that $\{(x_i, g_i, f_i)\}_{i \in \cI}$ is $\cF$-interpolable if and only if 
\begin{align}
	Q_{ij} := 
        2(M-m)(f_i - f_j) + 2\langle M g_j - m g_i, x_j - x_i \rangle - \|g_i - g_j\|^2 - Mm\|x-y\|^2
	\label{eq:def:Qij}
\end{align}
is non-negative for every pair of indices $i, j \in \cI$.
\par We apply these conditions along the trajectory of GD. Specifically, we take $\cI := \{0, 1, \dots, n, *\}$ to index the GD iterates and the optimum, and let $\{(x_i,g_i,f_i)\}_{i \in \cI}$ denote the first-order data\footnote{This is purely an analysis device and does not change the GD algorithm (which neither knows the optimum nor queries function values). Including function values simplifies the interpolability conditions~\citep{pesto} and thus our analysis.}. The upshot is that this theorem enables replacing the supremum over functions $f \in \cF$ by the data $\{(x_i,g_i,f_i)\}_{i \in \cI}$ in the definition of the worst-case rate $R(\alpha,\beta; \cF)$. Note that this replacement is lossless since the interpolability conditions in the theorem are necessary and sufficient.

\par From the perspective of hedging, these co-coercivity conditions $\{Q_{ij} \geq 0\}_{i \neq j \in \cI}$ generate all possible long-range consistency constraints on the objective function given the GD trajectory. From the perspective of multi-step descent, they generate all possible valid inequalities with which one can prove convergence rates for GD. Let us explain how we use this in the case $n=2$.

\begin{proof}[Proof of rate upper bound for Theorem~\ref{thm:2step:convex}]
	It suffices to prove the \emph{rate certification identity}
	\begin{align}
		R^2 \|x_0 - x^*\|^2 - \|x_2 - x^*\|^2 = \sum_{i \neq j \in \{0, 1, *\}} \lambda_{ij} Q_{ij}
		\label{eq:pf-2step:cert}
	\end{align}
	for some non-negative choice of multipliers $\lam_{ij}$. Indeed, since $Q_{ij} \geq 0$ is non-negative for any objective function $f \in \cF$, the rate certification identity implies
	\begin{align}
		R^2 \|x_0 - x^*\|^2 - \|x_2 - x^*\|^2 \geq 0\,,
	\end{align}
	which proves the claimed rate. It remains to construct non-negative $\lam_{ij}$ for the rate certification identity. This is done in \citep[Theorem 8.10]{altschuler2018greed}. For completeness, we include the explicit values here, in slightly simpler (but equivalent) form:
	\begin{align}
		\lam = \frac{\alpha^* (\beta^*)^2}{4}
            \begin{bmatrix}
			0 & \frac{(S-m)(S-M)}{(M-m)} & 0 \\
			\frac{(S-M) (2M -S- m)}{M-m} 
            & 0 & \frac{2 M^2 + S^2 - 2 M S -m^2}{M-m} \\
			\frac{m^3 - m^2 S + 4 M^2 S - m S^2 - 4 M S^2 + S^3}{M(m+S)}
            & 
            \frac{2 M S - m^2 - S^2}{M-m} & 0 \\
		\end{bmatrix}\,.
	\end{align}
    Here, the rows and columns are indexed by $0, 1,*$, in that order.
\end{proof}

\par Of course, the challenge in such a proof is finding the multipliers $\lam_{ij}$. When we prove our result for general $n$, we prove that the multipliers for the $2n$-length Silver Stepsize Schedule are recursively built from repeating the multipliers for the $n$-length Silver Stepsize Schedule twice, modulo a low rank and sparse correction expressible in closed form. With this recursion, (i) the multipliers for the $n=2$ case above can be derived formulaically from the textbook proof for $n=1$, and (ii) the proof for the case of general $n$ mirrors the proof for $n=2$, at least in spirit.

\subsubsection{Lower bound: optimality and uniqueness via hedging}

\begin{proof}[Proof of rate lower bound in Theorem~\ref{thm:2step:convex}]
	We prove the \emph{rate optimality identity}
	\begin{align}
		R(\alpha,\beta; \cF) \geq \underline{R}(\alpha,\beta)\,,
		\label{eq:pf-2step:opt}
	\end{align}
	for all non-trivial\footnote{We call such stepsizes non-trivial since if a stepsize is outside this interval, then clipping it to the interval improves convergence. This can be proved by noticing that, of the four hard functions in this proof, all but the fourth apply if $\alpha \geq 1/M$, and all but the third apply if $\alpha \leq 1/m$. By optimizing the resulting analogous bounds~\eqref{eq:pf-2step:opt} for the cases $\alpha < 1/M$ and $\alpha > 1/m$, it follows that clipping to the interval $[1/M,1/m]$ leads to faster convergence.
    } stepsizes $\alpha,\beta \in [1/M,1/m]$, where
	\begin{align*}
		\underline{R}(\alpha,\beta) := \max\left\{  (M\alpha-1)(M\beta-1), \, (1 -\alpha m)(1-\beta m), \, (M\alpha-1)(1 - m \beta),\, \frac{(1-m\alpha)(M\beta - 1)}{1+\alpha(M-m)} \right\}\,.
	\end{align*}
	This suffices since it is straightforward to verify that $\min_{\alpha,\beta} \underline{R}(\alpha,\beta)$ is minimized uniquely at $(\alpha^*,\beta^*)$ with value $R^*$; this yields the two defining equations in~\eqref{eq:twostepsizes}. Indeed, this verification can be done by hand by case enumeration, or simpler, it can be rigorously proven using standard symbolic computation techniques such as quantifier elimination~\citep{CavinessJohnson}.
 
	\par It remains to prove~\eqref{eq:pf-2step:opt}. We do this by exhibiting four ``hard-to-optimize'' functions $f \in \cF$ for which the $2$-step convergence rate of GD from initialization $x_0 = 1$ is given by these four values.
    The first two functions are the quadratics $f(x) = \frac{\lam}{2}x^2$ for $\lam \in \{\alpha,\beta\}$, in which case $x_2 = (1-\lam \alpha)(1 - \lam \beta)$. The other two functions are piecewise quadratic. It is perhaps simplest to state these functions via their second derivative since then any function value can be obtained by integrating from the minimum $x^* = 0$. The third function is given by $f''(x) = M$ for $x \geq 0$ and $f''(x) = m$ otherwise, in which case $x_2 = (1-M\alpha)(1 - m\beta)$. The fourth function is given by $f''(x) = m$ for $x \geq  \frac{1-m\alpha}{1+\alpha(M-m)}$ and $f''(x) = M$ otherwise, in which case $x_2 = \frac{(1-m\alpha)(1-M\beta)}{1 + \alpha(M-m)}$. This proves the desired identity~\eqref{eq:pf-2step:opt}.
\end{proof}

\par It is insightful to contrast these four hard functions defining the $2$-step rate function\footnote{The rate optimality identity~\eqref{eq:pf-2step:opt} actually holds with equality over all non-trivial stepsizes $\alpha,\beta \in [1/M,1/m]$, although this is unnecessary for our purposes.} with the analog for the quadratic case. Recall from~\eqref{eq:plausibility:quad-2step} that in the quadratic setting, the $2$-step rate function $R(\alpha,\beta \; \cF) = \sup_{m \leq \lambda \leq M} |(1 - \alpha \lambda)( 1- \beta \lambda)|$. Although this seems to requires infinitely many $\lambda$, it was shown in~\citep[Chapter 5.2.2]{altschuler2018greed} that one can replace the continuum $[m,M]$ with the $3$ extrema of Chebyshev polynomials, i.e., 
\[
	\max_{\lambda \in \{m,M,\frac{M+m}{2}\}} |(1 - \lambda \alpha) (1 - \lambda \beta)|\,,
\]
in the sense that minimizing this over $(\alpha,\beta)$ yields Young's $2$-step Chebyshev Schedule. These three values of $\lambda$ correspond to ``hard-to-optimize'' quadratic functions $f(x) = \tfrac{\lam}{2}x^2$. How are they different from the hard functions in the above proof? The first two quadratics are common between the quadratic and convex case, but the remaining functions differ. In particular, the third and fourth functions in the convex case are non-quadratic. In words, the richness of the convex function class enables changing the curvature in different places, which enables more alignment of the bad convergence rates for the individual stepsizes. This makes it provably harder to hedge in the convex setting. Note also that the denominator of the fourth function is singlehandedly responsible for the asymmetry in $\bar{R}(\alpha,\beta)$, and thus in the optimal schedule the convex setting.

This proof can be extended to establish the optimality of the Silver Stepsize Schedule, as will be detailed in a shortly forthcoming paper.

\section{Silver Stepsize Schedule}
\label{sec:construction}

For simplicity of exposition, from here on we restrict to horizons $n$ that are powers of $2$ (see \S\ref{sssec:intro:dis:schedule} for a discussion of extensions to general $n$), and we set $m=1/\kappa$ and $M=1$ to reduce notational overhead (this is without loss of generality after a possible rescaling).

\subsection{Normalized Silver Stepsizes}\label{ssec:construction:yz}
We construct auxiliary stepsize sequences $y_n,z_n$, that are normalized in a certain way to lie in the interval $[0,1]$. The particular normalization (a certain linear fractional transformation defined in \S\ref{ssec:construction:ab}) simplifies the recursive stepsize splitting by making it a quadratic equation. 
\par Explicitly, initialize the sequences $y_1 = z_1 = 1/\kappa$, and define $y_n,z_n$ recursively from $z_{n/2}$ as the solutions to the defining equations
\begin{align}
	y_nz_n = z_{n/2}^2
	\qquad \text{and} \qquad 
	z_n - y_n = 2(z_{n/2} - z_{n/2}^2)\,.
	\label{eq:yz-defining}
\end{align}
This is the direct analog of the stepsize splitting detailed for the case $n=2$ in \S\ref{ssec:hedging:convex}. Denoting $\xi = 1 - z_{n/2}$, the explicit solution is
\begin{equation}
	y_n = z_{n/2}\, / (\xi + \sqrt{1+\xi^2}) \qquad \text{ and } \qquad
	z_n = z_{n/2}\, (\xi + \sqrt{1+\xi^2})\,.
	\label{eq:yz-recur}
\end{equation}
\begin{figure}[t]
	\centering
	\includegraphics[height=6cm]{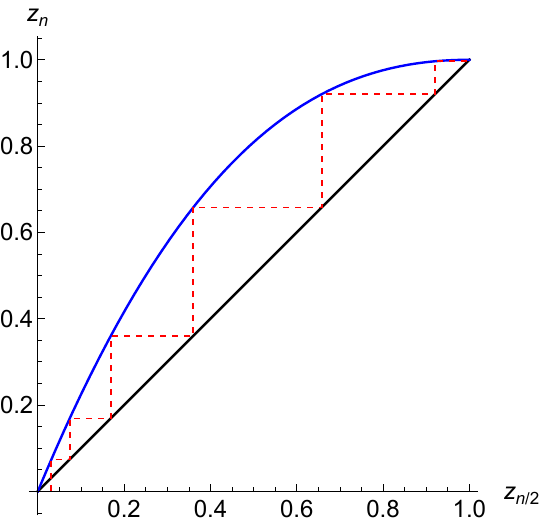}
	\caption{\footnotesize Cobweb plot describing the evolution of $z_n$, under the iteration $z_{n/2} \mapsto z_n$ given in~\eqref{eq:yz-defining} and~\eqref{eq:yz-recur}. The initial condition is $1/\kappa$ (in this plot, $\kappa=32$). The iterates grow exponentially when $z$ is near zero, and converge quadratically to $1$ when $z$ is close to $1$.}
	\label{fig:z-cobweb}
\end{figure}

The following lemma collect several simple observations about these sequences. See \S\ref{sec:rate} for a detailed discussion of how $y_n,z_n$ both increase to their limits $y_n, z_n \to 1$, exponentially fast when they are close to $0$, and then doubly exponentially fast when they are close to $1$.

\begin{lemma}[Basic properties of the Normalized Silver Stepsizes]\label{lem:yz-basic}
	The sequence $z_n$ is monotonically increasing from $z_1 = 1/\kappa$ to $\lim_{n \to \infty} z_n = 1$. 
	For all $n$,
	\begin{align}
		\frac{1}{\kappa} \leq y_n \leq z_n \leq 1\,.
	\end{align}
	Moreover, the above inequality $y_n \leq z_n$ is strict for any $\kappa > 1$.
\end{lemma}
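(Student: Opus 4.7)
My plan is to work directly with the explicit recursion~\eqref{eq:yz-recur}, viewing $z_n = \varphi(z_{n/2})$ where $\varphi(u) := u\bigl((1-u) + \sqrt{1+(1-u)^2}\bigr)$, and treating $y_n = z_{n/2}^2/z_n$ as a derived quantity via the product identity in~\eqref{eq:yz-defining}. The overall strategy is: establish the invariant $z_n \in (0,1]$ with monotonicity, pass to the limit via continuity of $\varphi$, and then deduce the sandwich $1/\kappa \leq y_n \leq z_n \leq 1$ from properties of $z_n$ alone.

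The first step is to show by induction that $0 < z_n \leq 1$ and that $z_n \geq z_{n/2}$ (strictly when $z_{n/2} < 1$). Writing $\xi := 1 - z_{n/2} \in [0,1]$, the factor $\xi + \sqrt{1+\xi^2} \geq 1$ with equality iff $\xi = 0$, which gives $z_n \geq z_{n/2}$ immediately. For the upper bound, I would rearrange $z_n \leq 1$ as $z_{n/2}\sqrt{1+(1-z_{n/2})^2} \leq 1 - z_{n/2}(1-z_{n/2})$, square both sides, and watch the mess cancel down to the tautology $(1-z_{n/2})^2 \geq 0$. Since $z_1 = 1/\kappa < 1$ for $\kappa > 1$, this induction preserves $z_n < 1$ at every finite $n$, hence yields strict monotonicity $z_{n/2} < z_n$.

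Second, monotone convergence gives $z_n \nearrow L$ for some $L \in (1/\kappa, 1]$. Continuity of $\varphi$ on $[0,1]$ lets me pass to the limit in $L = \varphi(L)$; dividing by $L > 0$ reduces this to $\sqrt{1+(1-L)^2} = L$, and squaring gives $2L - 1 = 1$, i.e., $L = 1$. So $\lim_n z_n = 1$.

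Finally, for the inequalities $1/\kappa \leq y_n \leq z_n \leq 1$: the outer bounds on $z_n$ are Step 1 together with $z_n \geq z_1 = 1/\kappa$. The middle inequality $y_n \leq z_n$ is equivalent via $y_n z_n = z_{n/2}^2$ to $z_{n/2}^2 \leq z_n^2$, which is the monotonicity of $z_n$; strictness for $\kappa > 1$ follows from the strict version $z_{n/2} < z_n$ proved above. The lower bound $y_n \geq 1/\kappa$ is the main obstacle: because $y_n$ is not monotone in $n$, it cannot be handled by a one-line induction on $y_n$ alone. My plan is to prove the equivalent bound $z_n \leq \kappa\, z_{n/2}^2$ as a joint invariant, propagated inductively using the explicit form~\eqref{eq:yz-recur}; the key input is that the growth factor $\varphi(u)/u = (1-u) + \sqrt{1+(1-u)^2}$ is controlled in terms of $u$ itself, so combining this with the induction hypothesis $z_{n/2} \geq 1/\kappa$ pins down $z_n/z_{n/2}^2 \leq \kappa$ and hence $y_n = z_{n/2}^2/z_n \geq 1/\kappa$. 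The hard part is selecting the right quadratic invariant, since the naive bound $z_n \leq 1$ alone is too weak to yield the floor on $y_n$.
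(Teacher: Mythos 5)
Most of your proposal is sound and essentially matches what a direct proof would look like: the algebra for $z_n\leq 1$ reduces correctly to $(1-z_{n/2})^2\geq 0$, the factor $\xi+\sqrt{1+\xi^2}\geq 1$ gives monotonicity, the fixed-point argument pins the limit at $1$, and $y_n\leq z_n$ falls out of $y_nz_n=z_{n/2}^2\leq z_n^2$. Those pieces are correct.

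The step you defer---proving $y_n\geq 1/\kappa$ via the proposed invariant $z_n\leq\kappa\,z_{n/2}^2$---cannot be closed, and you should not try. The invariant is already false at $n=2$: with $z_1=1/\kappa$ and $\xi=1-1/\kappa$ one has $z_2=z_1(\xi+\sqrt{1+\xi^2})>z_1$, while $\kappa z_1^2=1/\kappa=z_1$, so $z_2>\kappa z_1^2$. More pointedly, the target inequality itself fails at $n=2$: from $y_2 z_2=z_1^2$ and $z_2>z_1$ one gets $y_2=z_1^2/z_2<z_1=1/\kappa$ for every $\kappa>1$. (Numerically, $\kappa=4$ gives $z_2=1/2$, $y_2=1/8<1/4$; equivalently $a_2=4/3<\tfrac{8}{5}=\mathrm{HM}(1,4)$, which also contradicts Lemma~\ref{lem:ab-basic} as printed.) So the lemma's lower bound $y_n\geq 1/\kappa$ is a typo, and your (correct) instinct that ``$y_n$ is not monotone, so a one-line induction won't work'' is in fact detecting a statement that is simply untrue. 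What the paper actually uses downstream (see the non-negativity check in Theorem~\ref{thm:cert} and Appendix~\ref{app:cert}) is only $0\leq z_n\leq z_{2n}\leq 1$ and $1\leq a_n,b_n\leq\kappa$, i.e.\ $y_n\geq 0$, not $y_n\geq 1/\kappa$. Replace your Step~4 by the trivial observation $y_n>0$ (from $y_nz_n=z_{n/2}^2>0$ and $z_n>0$), and the proof is complete and correct.
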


\subsection{Silver Stepsizes}\label{ssec:construction:ab}
We define the Silver Stepsizes
\begin{align}
	a_n := \psi(y_n) \qquad \text{and} \qquad b_n := \psi(z_n) \,.
	\label{eq:yz-reparam-invert}
\end{align}
from the Normalized Silver Stepsizes $y_n,z_n$ via the linear fractional transformation $\psi$ given by
\begin{align}
    \psi : t \mapsto \frac{1 + \kappa t}{1 + t}
    \qquad \text{and} \qquad 
    \psi^{-1} : s \mapsto \frac{s-1}{\kappa-s}\,.
\end{align}

\par We remark that this mapping $\psi$ has the following special values
\begin{align}
	\psi(0)= 1
	\,, 
	\qquad \psi(1/\kappa) = \frac{2}{1+1/\kappa}\,, \qquad \psi(1) = \frac{1+\kappa}{2}\,, \qquad \psi(\infty)= \kappa \,.
	\label{eq:psi-special}
\end{align}
The significance of the two middle values is that these are the initial stepsizes $a_1 = b_1 = \psi(1/\kappa)$ and the limiting stepsizes $\lim_{n \to \infty} a_n = \lim_{n \to \infty} b_n = \psi(1)$. We remark that these two middle values are the harmonic and arithmetic means of the two extremal values, i.e., $\psi(1/\kappa) = \mathrm{HM}(\psi(0), \psi(\infty))$ and $\psi(1) = \mathrm{AM}(\psi(0), \psi(\infty))$. 
 The looseness in the classical AM-HM inequality therefore quantifies the gap between the initial and limiting stepsizes. The following lemma records this and several other simple observations about these stepsizes. 

\begin{lemma}[Basic properties of the Silver Stepsizes]\label{lem:ab-basic}
	The sequence $b_n$ is monotonically increasing from $b_1 = \mathrm{HM}(1,\kappa)$ to $\lim_{n \to \infty} b_n = \mathrm{AM}(1,\kappa)$. 
	For all $n$,
	\begin{align}
		1 \leq \mathrm{HM}(1,\kappa) \leq a_n \leq b_n \leq \mathrm{AM}(1,\kappa) \leq \kappa \,.
	\end{align}
	Moreover, the above inequality $a_n \leq b_n$ is strict for any $\kappa > 1$.
\end{lemma}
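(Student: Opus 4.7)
The plan is to reduce Lemma~\ref{lem:ab-basic} to Lemma~\ref{lem:yz-basic} by pushing the bounds on $y_n, z_n$ through the linear fractional transformation $\psi$. First I would verify that $\psi$ is strictly monotonically increasing on $(-1,\infty)$ whenever $\kappa > 1$, via the one-line derivative computation $\psi'(t) = (\kappa-1)/(1+t)^2 > 0$. The degenerate case $\kappa = 1$ collapses $\psi$ to the constant $1$ and makes every claim in the lemma trivially true, so I will assume $\kappa > 1$ from here on.

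With strict monotonicity of $\psi$ in hand, the core inequality is immediate: Lemma~\ref{lem:yz-basic} provides $1/\kappa \leq y_n \leq z_n \leq 1$ with the middle inequality strict, and applying $\psi$ term-by-term yields $\psi(1/\kappa) \leq a_n \leq b_n \leq \psi(1)$ with $a_n < b_n$ strict. Substituting the special values recorded in~\eqref{eq:psi-special}, namely $\psi(1/\kappa) = \mathrm{HM}(1,\kappa)$ and $\psi(1) = \mathrm{AM}(1,\kappa)$, gives the desired chain $\mathrm{HM}(1,\kappa) \leq a_n \leq b_n \leq \mathrm{AM}(1,\kappa)$. The outer bounds $1 \leq \mathrm{HM}(1,\kappa)$ and $\mathrm{AM}(1,\kappa) \leq \kappa$ are standard mean inequalities, each equivalent to $\kappa \geq 1$.

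For the monotonicity and limit claims for $b_n$, I would again transfer directly: Lemma~\ref{lem:yz-basic} states that $z_n$ increases monotonically from $z_1 = 1/\kappa$ to $\lim_n z_n = 1$, and since $\psi$ is continuous and strictly increasing on $[0,1]$, the sequence $b_n = \psi(z_n)$ is monotonically increasing from $\psi(1/\kappa) = \mathrm{HM}(1,\kappa)$ to $\psi(1) = \mathrm{AM}(1,\kappa)$. I do not anticipate any genuine obstacle in this proof: everything is just a transfer along $\psi$, and the only bookkeeping is to double-check the boundary evaluations in~\eqref{eq:psi-special} and to separate out the trivial $\kappa = 1$ case.
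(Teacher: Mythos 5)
Your proof is correct and is precisely the natural argument the paper intends: the paper states Lemma~\ref{lem:ab-basic} without proof as a ``simple observation,'' and the surrounding remarks (the special values in~\eqref{eq:psi-special} and the identification $\psi(1/\kappa)=\mathrm{HM}$, $\psi(1)=\mathrm{AM}$) make clear that the intended route is exactly to transfer Lemma~\ref{lem:yz-basic} through the strictly increasing map $\psi$, as you do.
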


\subsection{Silver Stepsize Schedule}\label{ssec:construction:schedule}

\begin{figure}[t]
	\centering
 	\includegraphics[width=.24\columnwidth]{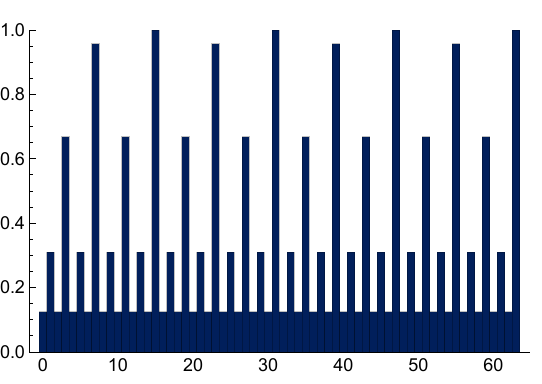}
   	\includegraphics[width=.24\columnwidth]{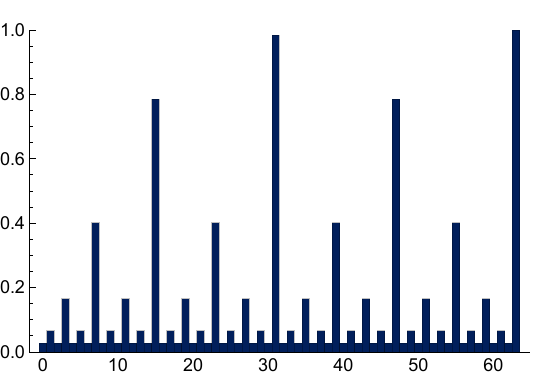}
 	\includegraphics[width=.24\columnwidth]{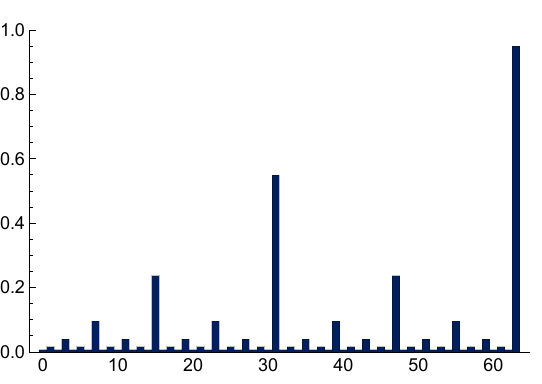}
 	\includegraphics[width=.24\columnwidth]{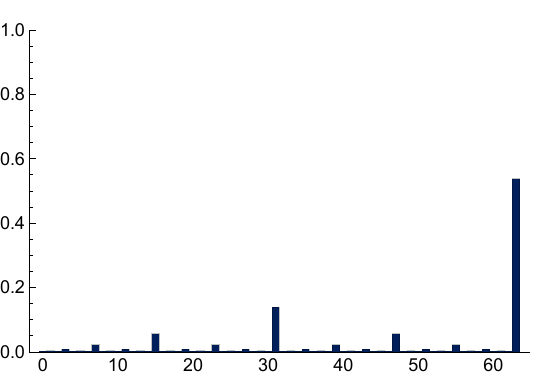}

	\caption{\footnotesize Normalized Silver Stepsize Schedule, for different condition numbers $\kappa = 4,16,64,256$. Notice that these are always bounded between 0 and 1. The Silver Stepsize Schedules $h^{(n)}$ shown in Figure~\ref{fig:stepsizes} are generated by applying $\psi$ to the schedules here.}
	\label{fig:normalizedstepsizes}
\end{figure}

Let $h^{(n)}$ denote the Silver Stepsize Schedule of length $n$.
Denote its $n/2$-th stepsize by $a_n$ and its $n$-th by $b_n$. As overviewed briefly in \S\ref{sssec:intro:dis:schedule}, we recursively construct
\begin{align}
	h^{(n)} := [\tilde{h}^{(n/2)}, a_n, \tilde{h}^{(n/2)}, b_n]\,,
	\label{eq:recurrence-stepsizes}
\end{align}
where $\tilde{h}^{(n/2)}$ denotes everything in $h^{(n/2)}$ except the final step, i.e., everything except $b_{n/2}$. Note that $b_{n/2}$ is in $h^{(n/2)}$, but not in $h^{(n)}$; it is split into $a_n$ and $b_n$. Note also that $a_n$, $b_n$ form the largest stepsizes in $h^{(n)}$, with $b_n$ being the largest (Lemma~\ref{lem:ab-basic}). For the convenience of the reader, we recall from \S\ref{sssec:intro:dis:schedule} that for small $n$, this pattern is
\begin{align*}
	h^{(1)} &= [a_1] \\
	h^{(2)} &= [a_2, b_2] \\
	h^{(4)} &= [a_2,a_4,a_2,b_4] \\
	h^{(8)} &= [a_2,a_4,a_2,a_8,a_2,a_4,a_2,b_8]
\end{align*}
See Figure~\ref{fig:stepsizes} for an illustration of this pattern, and see \S\ref{sssec:intro:dis:schedule} for a discussion of the emergent fractal, dependence on the horizon, and patterns for small $n$. 

\begin{remark}[Occupation measure]
	For all $i \in \N$ and all sufficiently large horizons $n \geq 2^i$, the stepsize $a_{2^i}$ is used in $2^{-i}$ fraction of the $n$-step Silver Stepsize Schedule.
	For example, for all horizons $n \geq 2$, the smallest stepsize $a_2 = \kappa/(\kappa - 1)$ is used in every other iteration. For the infinite limit of the Silver Stepsize Schedule (see \S\ref{sssec:intro:dis:schedule}), the occupation measure simplifies to 
	\begin{align*}
		\sum_{i = 1}^{\infty} 2^{-i} \delta_{a_{2^i}}\,.
	\end{align*}
	This can be viewed as a geometric distribution that takes value $a_{2^i}$ with probability $2^{-i}$.
\end{remark}

\subsection{Silver Convergence Rate}  

We define the Silver Convergence Rate as
	\begin{align}
		\tau_n := \left(\frac{1-z_n}{1+z_n} \right)^2\,.
		\label{def:tau}
	\end{align}
Of course, from just this definition it is not yet clear why we call $\tau_n$ a rate; in \S\ref{sec:cert} we prove that $\tau_n$ is the convergence rate of the Silver Stepsize Schedule. Note that since $z_n$ is monotonically increasing (Lemma~\ref{lem:yz-basic}), this rate $\tau_n$ is monotonically decreasing from the textbook unaccelerated rate $\tau_1 = ((\kap - 1)/(\kap+ 1))^2$ to $\lim_{n \to \infty} \tau_n = 0$. In the following section, we provide a complete understanding of exactly how fast $\tau_n$ converges to $0$.

\section{Analysis of the Silver Convergence Rate}\label{sec:rate}

Here we prove the bound on the Silver Convergence Rate $\tau_n$ in our main result (Theorem~\ref{thm:main}). We restate this bound for convenience.

\begin{theorem}[Silver Convergence Rate]\label{thm:rate}
	Denote $i^* := \lfloor \log_{\rho} \frac{\kappa}{3} \rfloor$ and $n^* := 2^{i^*}$. Then for any $n$ that is a power of $2$, we have the following bound on $\tau_n$. 
	\begin{itemize}
		\item \underline{Acceleration regime.} If $n \leq n^*$, then 
		$$\tau_n = 	\exp\left( - \Theta\left( \frac{n^{\log_2 \rho}}{\kappa} \right) \right)\,. $$
		\item \underline{Saturation regime.} If $n > n^*$, then $$\tau_n = \exp\left( - \Theta\left(  \frac{n}{ n^*} \right) \right)\,.$$
	\end{itemize}
\end{theorem}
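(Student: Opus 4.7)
The plan is to analyze the scalar recurrence for $z_n$ directly, since $\tau_n$ is a monotone function of $z_n$. The first step is to rewrite~\eqref{eq:yz-recur} in the compact form
\[
z_n = z_{n/2}\, r(z_{n/2})\,, \qquad \text{where} \qquad r(z) := (1-z) + \sqrt{1 + (1-z)^2}.
\]
The function $r$ is smooth and strictly decreasing on $[0,1]$ from $r(0) = 1+\sqrt{2} = \rho$ down to $r(1) = 1$. This immediately exhibits the two regimes claimed in the theorem: an early regime in which $z_{n/2}$ is small, so that $r(z_{n/2}) \approx \rho$, giving approximately geometric growth of $z$ by factor $\rho$ per doubling of $n$; and a late regime in which $z_{n/2}$ is close to $1$, so that $w_n := 1-z_n$ decays quadratically to $0$. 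The threshold $n^* = 2^{i^*}$ is by design essentially the doubling index at which this transition occurs.

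For the acceleration regime, setting $L_i := z_{2^i}$ I would prove the two-sided bound $L_i = \Theta(\rho^i/\kappa)$ via a telescoping argument. The upper bound $L_{i+1} \leq \rho L_i$ is immediate from $r(z) \leq \rho$, giving $L_i \leq \rho^i/\kappa$. For the lower bound, Taylor expanding $r$ at $0$ yields $r(z) \geq \rho(1-cz)$ for a constant $c>0$ on $[0,1/2]$, so $\log L_{i+1} \geq \log L_i + \log \rho - O(L_i)$. Summing in $i$ and using the previously established upper bound, the tail $\sum_{j<i} L_j \leq \sum_{j<i} \rho^j/\kappa$ is a geometric series bounded by $O(L_i) = O(1)$ while $L_i \leq 1/2$, yielding $L_i \geq c'\rho^i/\kappa$. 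With $i = \log_2 n$ this gives $z_n = \Theta(n^{\log_2 \rho}/\kappa)$ throughout $n \leq n^*$, and the specific definition $i^* = \lfloor \log_\rho(\kappa/3) \rfloor$ is precisely the largest $i$ for which this estimate stays in the regime $L_i \leq 1/2$. Translating to the rate, when $z_n$ is bounded away from $1$, $\sqrt{\tau_n} = (1-z_n)/(1+z_n) = 1 - \Theta(z_n)$, so $\log(1/\tau_n) = \Theta(z_n) = \Theta(n^{\log_2 \rho}/\kappa)$.

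For the saturation regime, I would switch to the variable $w_n := 1-z_n$ and expand $1 - z_{n/2} r(z_{n/2})$ about $w_{n/2}=0$. A short computation yields
\[
w_n = \tfrac{1}{2} w_{n/2}^2 \bigl(1 + O(w_{n/2})\bigr)\,,
\]
confirming the quadratic convergence to $0$ once $w$ is small. Taking logs and setting $\ell_i := \log(2/w_{n^* \cdot 2^i})$ gives the affine-with-small-correction recurrence $\ell_{i+1} = 2\ell_i + O(w_{n^* \cdot 2^i})$; since the errors $w_{n^* \cdot 2^i}$ themselves decay doubly exponentially in $i$, they sum to $O(1)$, yielding $\ell_i = \Theta(2^i)$ provided $\ell_0$ is a positive constant. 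The latter is secured by the acceleration analysis together with the choice of $i^*$, which ensures that $z_{n^*}$ is bounded away from both $0$ and $1$ (morally, $z_{n^*} \approx 1/3$). Finally, for $w_n$ small, $\sqrt{\tau_n} = w_n/(2-w_n) = \Theta(w_n)$, so $\log(1/\tau_n) = 2\ell_i + O(1) = \Theta(n/n^*)$.

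The main obstacle I anticipate is matching constants across the transition: the Taylor expansions of $r$ at $0$ and of the $w$-recurrence at $0$ are accurate only in fixed neighborhoods of their respective endpoints, so one must verify that the intermediate range where $z_n$ is, say, between $1/3$ and $2/3$ is traversed in $O(1)$ doublings without contributing any unbounded correction. The specific factor of $3$ in the definition of $i^*$ is presumably calibrated exactly for this purpose. A secondary, mostly bookkeeping issue is that the telescoping lower bound in the acceleration regime is lightly circular in that it invokes the upper bound to control $\sum_j L_j$; this is easily handled by establishing the upper bound first as a standalone claim and then using it inside the lower-bound derivation.
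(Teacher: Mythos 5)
Your proposal is correct and follows essentially the same route as the paper: cast the normalized stepsize recurrence as a scalar dynamical system, Taylor expand its update map near the two fixed points (origin and $1$) to isolate the acceleration and saturation regimes, and control the accumulated multiplicative error from the expansion by a telescoping/geometric-series argument seeded at $n^*$, where the correct choice of the constant in $i^*$ ensures the transition point is bounded away from both endpoints. The only substantive difference is the choice of coordinate. You work with $z_n$ and $w_n := 1 - z_n$ directly, whereas the paper reparameterizes to $h_i := 2z_{2^i}/(1+z_{2^i})$, so that $\tau_n = (1-h_i)^2$ exactly, and studies the induced update $H = \Psi \circ F \circ \Psi^{-1}$. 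This reparameterization buys something real: the saturation-regime estimate becomes the exact, globally valid two-sided bound $(1-h)^2 - (1-h)^4 \leq 1 - H(h) \leq (1-h)^2$ for all $h \geq 0$ (Lemma~\ref{lem:dynamics-saturation}), whose upper half immediately gives the unconditional rate-monotonicity $\tau_{2n} \leq \tau_n^2$ (Corollary~\ref{cor:rate-monotonicity}). The saturation-regime upper bound then collapses to one line, $\tau_n \leq \tau_{n^*}^{n/n^*}$, with no transition-region bookkeeping at all; and the matching lower bound $\tau_{2n} \geq \tau_n^2(1-\tau_n)^2$ requires only that $\tau_{n^*}$ is bounded away from $1$, which is immediate from the acceleration analysis. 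In contrast, your $w$-recurrence $w_n = \frac12 w_{n/2}^2\bigl(1 + O(w_{n/2})\bigr)$ carries the same dynamics but the $1+O(w)$ factor is $\Theta(1)$, not $1 + o(1)$, near the transition; this is exactly the ``matching constants across the transition'' concern you flag, and although it can be managed (the $w_{n^*2^i}$ decrease and their weighted contributions form a convergent series dominated by the initial condition $\ell_0$), the paper's coordinate change removes the issue by construction. Your acceleration-regime argument (upper bound $L_{i+1} \leq \rho L_i$ as a standalone claim, then fed into the telescoped lower bound) is precisely the paper's \eqref{eq:pf-rate:1}, and your handling of the borderline circularity there is correct.
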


This result establishes $n^* \asymp \kappa^{\log_{\rho} 2}$ as the location of a phase transition. There, the Silver Convergence Rate $\tau_n$ switches from super-exponential to exponential in the horizon $n$. See the introduction for a detailed discussion of this phase transition and the intuition behind it in terms of how the Silver Stepsize Schedule is effectively periodic with periodic of length $n^*$.

\par For simplicity, we make no attempt to optimize the constants in the $\Theta$ and the choice of $i^*$ (the $1/3$ in the theorem statement is arbitrary). 
Our proofs make crude constant bounds to ease the exposition, and it is straightforward to tighten these. However, as established by our upper and lower bounds, our proofs are already tight up to reasonable constant factors.

\par The section is organized as follows. In \S\ref{ssec:rate:heuristic}, we provide a heuristic derivation of Theorem~\ref{thm:rate} that explains the phase transition via Taylor expanding the dynamics in the two regimes. This gives the central intuition for the result and its proof. In \S\ref{ssec:rate:rigorous}, we make these Taylor expansions precise to conclude the proof of Theorem~\ref{thm:rate}.

\subsection{Heuristic derivation}\label{ssec:rate:heuristic}

The phase transition in $\tau_n = (\tfrac{1-z_n}{1+z_n})^2$ is a consequence of the phase transition in the dynamics of the auxiliary sequence $z_n$.
To explain this, it is convenient to simplify notation by re-indexing $n = 2^i$ so that iterations of the dynamical process are indexed by $i=0,1,2,3,\dots$ rather than $n=1,2,4,8\dots$. It is helpful to also re-parameterize 
\begin{align*}
	h_i := \Psi(z_{2^i})\,,
\end{align*}
where $\Psi : (0,1) \to (0,1)$ is the monotone bijection
\begin{align*}
	\Psi(z) := \frac{2z}{1+z}\,.
\end{align*}
The significance of this re-parameterization to $h_i$ is that
\begin{align}
	\tau_n = \left( \frac{1-z_n}{1+z_n} \right)^2 = (1 - h_i)^2 \,.
	\label{eq:rate:tau-h}
\end{align}
Thus, proving a fast convergence rate amounts to lower bounding $h_i$.

\begin{figure}[t]
	\centering
	\includegraphics*[height=7cm]{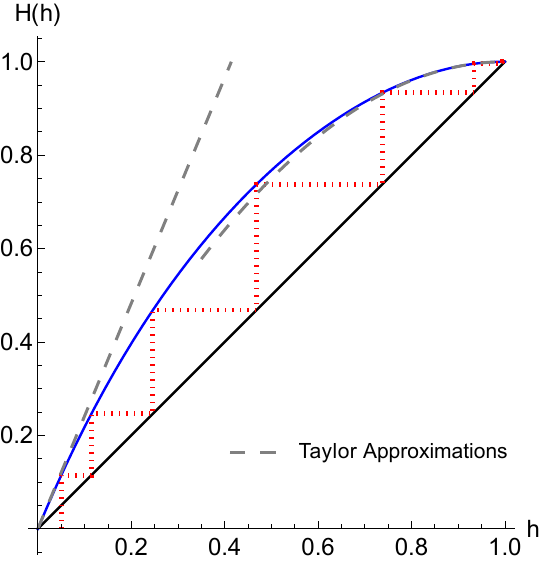}
	\caption{\footnotesize Cobweb diagram for the function $H(h)$ in equation~\eqref{eq:rate:def-H} and its Taylor approximations~\eqref{eq:regime-acceleration:taylor} and~\eqref{eq:regime-saturation:taylor}.  Starting from the initial condition~\eqref{eq:pf-rate:init}, iterates grow by a constant multiplicative factor of the Silver Ratio $\rho = 1 + \sqrt{2}$ when $h$ is near zero, and converge quadratically to 1 when $h$ is close to 1. 
    }
	\label{fig:h-iteration}
\end{figure}

\par What do the dynamics of $h_i$ look like? At initialization, $z_1 = 1/\kappa$ (see \S\ref{sec:construction}), thus
\begin{align}
	h_0 = \frac{2}{1 + \kappa} \asymp \frac{1}{\kappa}\,.
	\label{eq:pf-rate:init}
\end{align}
Then the iterations of this process increase $h_i$ exponentially fast to $1$ when it is sub-constant size, and then doubly-exponentially fast when $h_i$ is of constant size. 
(This dichotomy is the source of the phase transition in Theorem~\ref{thm:rate}.)
\par To analyze these dynamics, let $H : (0, 1) \to (0,1)$ denote the update function sending $h_i$ to $h_{i+1}$. Then $H = \Psi \circ F \circ \Psi^{-1}$ where $F(z) = z (1 - z + \sqrt{1 + (1-z)^2})$ is the function that updates $z_n$ to $z_{2n} = F(z_n)$, see Section~\ref{sec:construction}. A direct algebraic computation gives the explicit expression
\begin{align}
	H(h) = \frac{h \, (2 - 3 h + \sqrt{5 h^2 - 12 h + 8})}
    {2(1-h^2)}\,.
	\label{eq:rate:def-H}
\end{align}
Taylor expanding $H$ around $h \approx 0$ and $h \approx 1$ illustrates the markedly different dynamics in these two regimes; 
see Figure~\ref{fig:h-iteration}.
\begin{itemize}
	\item \underline{Acceleration regime.} For $h \ll 1$, 
	\begin{align}
		H(h) \approx \rho h\,.
		\label{eq:regime-acceleration:taylor}
	\end{align}
	Thus, in this regime, each $h_i$ increases by a factor of roughly $\rho$, thus $h_i \approx \rho^i h_0 \approx \rho^i / (2\kap)$, thus the Silver Convergence Rate is roughly
	\begin{align}
		\tau_n = (1 - h_i)^2 \approx \exp( - 2 h_i) \approx \exp\left( -\rho^i / \kappa\right)
		= \exp\left( - n^{\log_2 \rho} / \kappa \right)
		\,.
		\label{eq:informal-rate:acceleration}
	\end{align}
	This regime lasts for only $i \approx \log_{\rho} \kappa$ iterations (aka horizon $n=2^i \approx \kappa^{\log_2 \rho}$)
	because at that point $h_i \asymp \rho^i / \kap \asymp 1$ is of constant size. This is the phase transition.
	\item \underline{Saturation regime.} For $h \approx 1$, 
	\begin{align}
		1 - H(h) \approx (1 - h)^2\,.
		\label{eq:regime-saturation:taylor}
	\end{align}
	In words, the key phenomenon here is that the average rate $\tau_n^{1/n}$ stays essentially the same as $n$ increases---in contrast to the acceleration regime, in which the average rate improves in $n$. Indeed, the Taylor expansion~\eqref{eq:regime-saturation:taylor} indicates that in the saturation regime, $\tau_{n} = \left( 1 - h_{i}\right)^2 \approx (1 - h_{i-1})^4 = \tau_{n/2}^2$. 
	By repeating this argument and then using the fact that $\tau_{n^*} = \exp(-\Theta(1))$ which follows from the acceleration regime, we obtain
	\begin{align}
		\tau_n \approx \left( \tau_{n^*} \right)^{n / n^*}
		= \exp\left( - \Theta\left( n/n^* \right) \right)\,.
	\end{align}
\end{itemize}

\par If the approximations were justified in the above two displays, then this informal argument would lead to a proof of Theorem~\ref{thm:rate}. We do this in the following subsection.

\subsection{Rigorous derivation}\label{ssec:rate:rigorous}

Here we prove Theorem~\ref{thm:rate}. 
We first state two helper lemmas, which formalize the Taylor approximations~\eqref{eq:regime-acceleration:taylor} and~\eqref{eq:regime-saturation:taylor} in the acceleration regime and saturation regime, respectively.

\begin{lemma}[Dynamics in the acceleration regime]\label{lem:dynamics-acceleration}
	Let $\nu := \frac{3\rho}{2\sqrt{2}} \approx 2.561$. For all $h \geq 0$,
	\begin{align*}
		\rho h - \nu h^2 \leq H(h) \leq \rho h \,.
	\end{align*}
\end{lemma}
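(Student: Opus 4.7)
The plan is to reduce both inequalities to polynomial identities after clearing the denominator of $H$ and isolating the square root. Since $H(0)=0$, both bounds hold with equality at $h=0$; for $h \in (0,1)$, the two bounds $H(h)/h \le \rho$ and $H(h)/h \ge \rho - \nu h$ become, after multiplying by $2(1-h^2) > 0$ and rearranging,
\[
\sqrt{5h^2 - 12h + 8} \,\le\, A(h)
\qquad \text{and} \qquad
\sqrt{5h^2 - 12h + 8} \,\ge\, B(h),
\]
where $A(h) \defeq 2\sqrt{2} + 3h - 2\rho h^2$ and $B(h) \defeq A(h) - 2\nu h(1-h^2)$. The whole argument then comes down to verifying two closed-form polynomial identities.

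For the upper bound, $A$ is a downward-opening parabola with $A(0)=2\sqrt{2}$ and $A(1)=1$ both positive; the minimum of a concave function on $[0,1]$ is attained at an endpoint, so $A>0$ there and squaring is legitimate. A direct expansion (using $8\sqrt{2}\rho = 4\rho^2 + 4$ to simplify the $h^2$ coefficient) yields
\[
A(h)^2 - (5h^2 - 12h + 8) \,=\, 4\rho\, h\,(1 - h^2)\,(3 - \rho h),
\]
which is nonnegative on $[0,1)$ since $\rho = 1+\sqrt{2} < 3$.

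For the lower bound, it suffices to show $B(h)^2 \le 5h^2 - 12h + 8$, since then $\sqrt{5h^2-12h+8}\ge|B(h)|\ge B(h)$ regardless of the sign of $B$. Writing $A - B = 2\nu h(1-h^2)$ and combining with the upper-bound identity,
\[
(5h^2 - 12h + 8) - B(h)^2 \,=\, 2h(1-h^2)\,\bigl[\nu(A+B) - 2\rho(3-\rho h)\bigr].
\]
The bracket vanishes at $h=0$ thanks to the precise choice $\nu = \tfrac{3\rho}{2\sqrt{2}}$ (equivalently, $4\sqrt{2}\nu = 6\rho$); extracting the resulting factor of $h$ leaves a quadratic, so the identity takes the form $4 h^2 (1-h^2)\, Q(h)$ with
\[
Q(h) \,\defeq\, \nu^2 h^2 - 2\rho\nu h + (3\nu - \nu^2 + \rho^2).
\]
The discriminant of $Q$ is $4\nu^3(\nu - 3)$, which is strictly negative since $\nu = \tfrac{3(2+\sqrt{2})}{4} < 3$; hence $Q>0$ on all of $\R$, the full expression is nonnegative on $[0,1]$, and the lower bound follows.

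The only real obstacle is spotting these factorizations. The value of $\nu$ is forced by matching the second-order Taylor coefficient $-H''(0)/2$ of $H$ at $0$; a direct computation gives $H''(0)/2 = -\tfrac{3(2+\sqrt{2})}{4} = -\nu$, which is exactly what kills the value at $h=0$ of $\nu(A+B) - 2\rho(3-\rho h)$ and enables the clean $h^2(1-h^2)$ factorization. Everything else is a routine discriminant check.
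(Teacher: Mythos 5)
Your proof is correct on the domain that matters, namely $h \in [0,1)$, and it is in fact more explicit than what the paper offers: the authors omit the proof entirely, remarking only that such bounds on a single-variable algebraic function can be established algorithmically via critical-point computation and root isolation. You instead bypass the algorithm by exhibiting closed-form factorizations. I verified the details: isolating the square root does yield $A(h) = 2\sqrt{2} + 3h - 2\rho h^2$ (using $2\rho - 2 = 2\sqrt{2}$); $A$ is concave with $A(0) = 2\sqrt{2} > 0$ and $A(1) = 1 > 0$, so it is positive on $[0,1]$ and squaring is legitimate; the identity $A(h)^2 - (5h^2 - 12h + 8) = 4\rho h\,(1-h^2)(3 - \rho h)$ is exact (with the $h^2$-coefficient $4 - 8\sqrt{2}\rho = -4\rho^2$ coming from $8\sqrt{2}\rho = 4\rho^2 + 4$), and it is nonnegative on $[0,1)$ since $\rho < 3$. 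For the lower bound, the observation that it suffices to show $B(h)^2 \le 5h^2 - 12h + 8$ (because $B \le |B| \le \sqrt{\cdot}$, and the radicand has negative discriminant so is always positive) neatly sidesteps any sign analysis of $B$. The difference of squares combined with the upper-bound identity gives $(5h^2-12h+8) - B^2 = 4h^2(1-h^2)\,Q(h)$, the factor of $h^2$ coming exactly from the choice $\nu = 3\rho/(2\sqrt{2})$ matching the quadratic Taylor coefficient of $H$ at $0$, and $\mathrm{disc}(Q) = 4\nu^3(\nu-3) < 0$ since $\nu = \tfrac{3(2+\sqrt{2})}{4} < 3$, so $Q > 0$ everywhere. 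One small quantifier point: the paper states the lemma ``for all $h \ge 0$,'' but $H$ is a map $(0,1) \to (0,1)$ and your positivity arguments ($A>0$, $1-h^2>0$, $3-\rho h>0$) all require $h < 1$. Since the iterates $h_i$ used in the rate analysis always lie in $(0,1)$, this is the intended domain and your proof covers exactly what is needed.
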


\begin{lemma}[Dynamics in the saturation regime]\label{lem:dynamics-saturation}
	For all $h \geq 0$,
	\begin{align*}
		(1-h)^2 - (1-h)^4 \leq 1 - H(h) \leq (1 - h)^2\,.
	\end{align*}
\end{lemma}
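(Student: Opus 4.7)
The plan is to prove both inequalities on the natural domain $h \in [0, 1]$ by first deriving a clean closed-form for $1 - H(h)$ and then reducing each bound to an elementary polynomial identity. The convenient substitution is $u := 1 - h \in [0, 1]$, under which $5h^2 - 12h + 8 = 1 + 2u + 5u^2$ and $1 - h^2 = u(2 - u)$. Plugging into the definition of $H$ and simplifying should give
\[
1 - H(h) = \frac{1 + u^2 - (1-u)\sqrt{1 + 2u + 5u^2}}{2u(2-u)}.
\]
Rationalizing the numerator against its conjugate, using the polynomial identity $(1+u^2)^2 - (1-u)^2(1 + 2u + 5u^2) = 4u^3(2-u)$ (verified by direct expansion), collapses this to the clean form
\[
1 - H(h) = \frac{2u^2}{1 + u^2 + (1-u)\sqrt{1 + 2u + 5u^2}}.
\]

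With this expression in hand, the upper bound $1 - H(h) \leq u^2$ is equivalent to the denominator being at least $2$, i.e., $(1-u)\sqrt{1 + 2u + 5u^2} \geq 1 - u^2 = (1-u)(1+u)$. For $u \in [0, 1)$ this reduces, after dividing by $1 - u > 0$ and squaring, to $4u^2 \geq 0$; the endpoint $u = 1$ holds with equality. So the upper bound is essentially immediate.

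The lower bound is where the real work is. Rearranging $1 - H(h) \geq u^2(1 - u^2)$ as a bound on the denominator, clearing the square root, and squaring (both sides are nonnegative on $[0, 1]$) reduces the claim to the polynomial inequality $(1+u^4)^2 \geq (1-u)^4(1+u)^2(1 + 2u + 5u^2)$. I expect the difference to vanish to order $u^3$ at $u = 0$; pulling out this factor should produce the identity
\[
(1+u^4)^2 - (1-u)^4(1+u)^2(1 + 2u + 5u^2) = 4u^3 \, Q(u), \qquad Q(u) := 2 - 4u^2 + 2u^3 + 2u^4 - u^5,
\]
so everything comes down to showing $Q(u) \geq 0$ on $[0, 1]$. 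My plan for this step is the explicit decomposition
\[
Q(u) = 2(1 - u^2)^2 + u^3(2 - u^2),
\]
both of whose summands are manifestly non-negative on $[0, 1]$.

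The main obstacle is locating this sum-of-non-negative-terms decomposition of $Q$; it is not forced by the preceding manipulations and is where the cleverness sits (a symbolic-computation tool or a low-degree SOS ansatz can be used to find it). Everything else---the rationalization, the clearing of the square root, and the polynomial expansion that isolates $Q$---is routine and can be verified by hand. Once the decomposition of $Q$ is in place, both halves of the lemma collapse to trivial inequalities.
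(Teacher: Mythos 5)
Your proof is correct, and it is genuinely different in spirit from what the paper does: the paper simply omits the argument, asserting that the inequalities ``can be formally proven in a routine algorithmic way\dots by computing the critical points and using well-known techniques for root isolation,'' whereas you give an explicit, hand-checkable algebraic derivation. The key steps all verify: under $u = 1 - h$ one indeed gets $1 - H(h) = \frac{2u^2}{1 + u^2 + (1 - u)\sqrt{1 + 2u + 5u^2}}$ after the conjugate trick, using the identity $(1 + u^2)^2 - (1 - u)^2(1 + 2u + 5u^2) = 4u^3(2 - u)$; the upper bound collapses to $4u^2 \geq 0$; the lower bound reduces (after dividing by $u^2$, rearranging, clearing $1 - u^2 > 0$, and squaring the two positive sides) to $(1 + u^4)^2 - (1 - u)^4(1 + u)^2(1 + 2u + 5u^2) = 4u^3\,Q(u) \geq 0$ with $Q(u) = 2 - 4u^2 + 2u^3 + 2u^4 - u^5$; and your factorization $Q(u) = 2(1 - u^2)^2 + u^3(2 - u^2)$ is correct and manifestly non-negative on $[0,1]$, so the chain closes. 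Finding that decomposition is exactly the nontrivial step, as you flag, and it is a nicer certificate than a cobbled-together critical-point argument. One structural remark: the paper states the lemma ``for all $h \geq 0$,'' but your restriction to $h \in [0,1]$ is not merely convenient — the lower bound actually fails outside it (at $h = 3/2$ one has $1 - H = \frac{2}{5 + 3\sqrt{5}} \approx 0.171 < 0.1875 = (1-h)^2 - (1-h)^4$). Since the paper itself defines $H$ on $(0,1)$ and only invokes the lemma there, your domain is the correct one, and it would be worth stating explicitly that you are proving the statement on the domain where $H$ is actually defined and used.
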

We omit the proofs of these lemmas, since the inequalities are visually obvious from plotting the functions, and can be formally proven in a routine algorithmic way, as they only involve algebraic functions of a single scalar variable. This is done by computing the critical points and using well-known techniques for root isolation; see e.g.~\citep{BPRbook}.

An appealing consequence of Lemma~\ref{lem:dynamics-saturation} is the inequality $\tau_{2n} \leq \tau_n^2$. We call this the \emph{rate monotonicity} property of the Silver Stepsize Schedule, since it amounts to the statement that using the $2n$-step schedule is at least as good as using the $n$-step schedule twice.

\begin{cor}[Rate monotonicity property for the Silver Stepsize Schedule]\label{cor:rate-monotonicity}
	For any $n$ that is a power of $2$,
	\[
		\tau_{2n} \leq \tau_{n}^2\,.
	\]
\end{cor}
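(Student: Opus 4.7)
The plan is to reduce the inequality directly to the upper bound in Lemma~\ref{lem:dynamics-saturation}, which has already been established. The key observation is the identity~\eqref{eq:rate:tau-h}, which recasts $\tau_n$ in terms of the reparameterized sequence $h_i$ (where $n = 2^i$): namely $\tau_n = (1-h_i)^2$. Under this reparameterization, the claimed inequality $\tau_{2n} \leq \tau_n^2$ becomes $(1-h_{i+1})^2 \leq (1-h_i)^4$, and since by Lemma~\ref{lem:yz-basic} we have $z_n \in (0,1]$, hence $h_i = \Psi(z_{2^i}) \in (0,1]$, both sides involve nonnegative quantities, so it suffices to show $1 - h_{i+1} \leq (1-h_i)^2$.

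Now I would recall that $h_{i+1} = H(h_i)$ by the definition of $H$ as the update map on the $h$-sequence given in~\eqref{eq:rate:def-H}. Substituting this in, the desired inequality reads $1 - H(h_i) \leq (1-h_i)^2$, which is exactly the upper bound in Lemma~\ref{lem:dynamics-saturation} applied at $h = h_i$. Squaring yields $\tau_{2n} = (1-h_{i+1})^2 \leq (1-h_i)^4 = \tau_n^2$, as desired.

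There is essentially no obstacle here: the corollary is a one-line consequence of Lemma~\ref{lem:dynamics-saturation} once one unwinds the definition~\eqref{def:tau} of $\tau_n$ through the reparameterization~\eqref{eq:rate:tau-h}. The only points worth verifying explicitly are (i) that $h_i \in [0,1]$ so that squaring a nonnegative inequality is valid, which follows from Lemma~\ref{lem:yz-basic} combined with the monotonicity of $\Psi$ on $(0,1)$, and (ii) that the saturation-regime bound of Lemma~\ref{lem:dynamics-saturation} is valid for all $h \in [0,1]$ (not merely $h$ near $1$), which is stated in the lemma as a global inequality. No new computations are required.
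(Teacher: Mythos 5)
Your proof is correct and follows essentially the same route as the paper: identify $\tau_n = (1-h_i)^2$ via~\eqref{eq:rate:tau-h}, apply the upper bound $1-H(h) \leq (1-h)^2$ from Lemma~\ref{lem:dynamics-saturation}, and square. The only difference is that you spell out the nonnegativity needed to square the inequality (namely $h_i, h_{i+1} \in (0,1)$), which the paper leaves implicit.
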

\begin{proof}
	By~\eqref{eq:rate:tau-h}, then Lemma~\ref{lem:dynamics-saturation}, then~\eqref{eq:rate:tau-h} again, we have
	$\tau_{2n} = (1 - h_{i+1})^2 \leq (1-h_i)^4 = \tau_n$.
\end{proof}

\begin{proof}[Proof of Theorem~\ref{thm:rate}] Here we prove the upper bounds (a.k.a., the convergence rates). The matching lower bounds are conceptually identical and deferred to Appendix~\ref{app:rate} for brevity.
\par \underline{Acceleration regime.} Suppose $n \leq n^*$. Let $i := \log_2 n \leq i^*$. We bound
	\begin{align}
		h_i 
		\geq 
		\rho^i h_0 - \nu \sum_{t=0}^{i-1} \rho^{i-1-t} h_t^2
		\geq \rho^i h_0 - \nu h_0^2 \rho^{i-1} \sum_{t=0}^{i-1} \rho^t
		\geq \rho^i h_0 \left( 1 - 
		\frac{3}{4}
		\rho^i h_0 \right)
		\geq \frac{\rho^i h_0}{2}
		=
		\frac{n^{\log_2 \rho}}{2\kappa}\,.
		\label{eq:pf-rate:1}
	\end{align}
	Above, the first step is by $i$ applications of the lower bound in Lemma~\ref{lem:dynamics-acceleration}. The second step is because $h_t \leq \rho^t h_0$ by $t$ applications of the upper bound in Lemma~\ref{lem:dynamics-acceleration}. The third step is by summing the geometric series, crudely dropping a positive term, and simplifying $\nu/(\rho\sqrt{2}) = 3/4$. The fourth step is because $\rho^i h_0 \leq \rho^{i^*} h_0 \leq 2/3$ by definition of $i^*$ and the initialization upper bound $h_0 \leq 2/\kappa$, see~\eqref{eq:pf-rate:init}. The final step is by definition of $i = \log_2 n$ and the initialization lower bound $h_0 \geq 1/\kappa$, see~\eqref{eq:pf-rate:init}. This completes the proof since by~\eqref{eq:rate:tau-h},
	\begin{align*}
		\tau_n = (1 - h_i)^2 \leq \exp( -2 h_i) \leq \exp(- n^{\log_2 \rho} / \kappa)\,.
	\end{align*}
	
	\par \underline{Saturation regime.} Next, suppose $n > n^*$. By $n/n^*$ applications of Corollary~\ref{cor:rate-monotonicity} and then using the bound on $\tau_{n^*}$ proved in the acceleration regime, we have
	\begin{align*}
		\tau_n
		\leq
		\left( \tau_{n^*} \right)^{n / n^*} 
		\leq 
		\exp\left( - \frac{n}{n^*} \cdot \frac{(n^*)^{\log_2 \rho}}{\kappa} \right)
		\,.
	\end{align*}
	The proof is complete by using the definition of $n^*$ and $i^*$ to bound $(n^*)^{\log_2 \rho} = \rho^{i^*} \geq \frac{\kappa}{3\rho}$.
\end{proof}

\section{Certificate of the Silver Convergence Rate}\label{sec:cert}

Here we prove that the Silver Stepsize Schedule has convergence rate $\tau_n$. This is where we establish multi-step descent. 
For a conceptual overview, we refer the reader to \S\ref{ssec:hedging:convex} for the case of $n=2$; the proof for general $n$ here mirrors that key case, albeit is more technically involved. 

\par Recall from the discussion there that the proof strategy amounts to finding a \emph{certificate} $\{\lam_{ij}\}$ for the rate $\tau_n$, by which we mean non-negative multipliers $\{\lam_{ij}\}_{i,j \in \{0, \dots, n-1,*\}}$ such that 
\begin{align}
	\tau_n \|x_0 -  x^*\|^2 - \|x_n - x^*\|^2 = \sum_{i,j \in \{0, 1, \dots, n, *\}} \lam_{ij}Q_{ij}\,.
\end{align}
See~\S\ref{ssec:hedging:convex} for a definition of the co-coercivities $Q_{ij}$. Briefly, these are valid inequalities that generate all possible long-range consistency conditions between the gradients seen along GD's trajectory. 

\begin{figure}[t]
	\centering
	\includegraphics[width=0.5\linewidth]{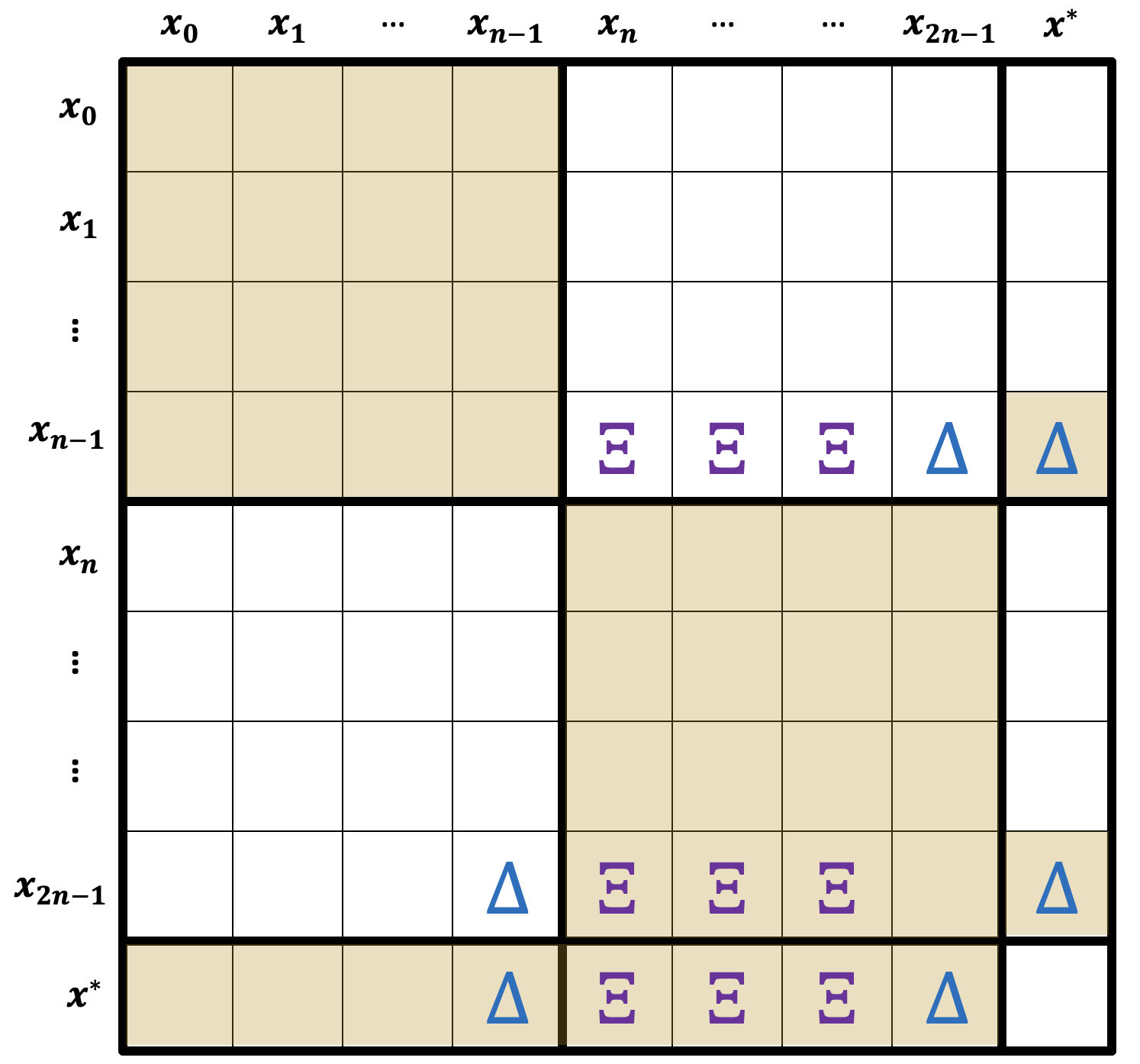}
	\caption{\footnotesize Components of the recursively glued certificate in Theorem~\ref{thm:cert}, illustrated here for combining two copies of the $n=4$ certificate (shaded) to create the $2n=8$ certificate.}
	\label{fig:gluing}
\end{figure}

\par Our proof builds the $2n$-step certificate by \emph{recursively gluing} two copies of the $n$-step certificate and adding slight modifications to account for the fact that the $2n$-step Silver Stepsize Schedule $h^{(2n)}$ differs from $[h^{(n)}, h^{(n)}]$ in two out of the $2n$ stepsizes. Concretely, this recursive gluing can be understood as creating the $(2n+1) \times (2n+1)$ matrix $\{\lam_{ij}\}_{i,j \in \{0, \dots, 2n-1, *\}}$ from the $(n+1) \times (n+1)$ matrix $\{\sig_{ij}\}_{i,j \in \{0, \dots, n-1,*\}}$ in three parts: \textit{a tensor product} which glues together two copies of the $n$-step certificate, a \textit{rank-one correction} which affects the rows indexed by $i \in \{n-1,2n-1,*\}$, and a \textit{sparse correction} which affects the $6$ entries $(i,j)$ where $i \neq j \in \{n-1,2n-1,*\}$. See Figure~\ref{fig:gluing}.

This recursive gluing is formally stated in Theorem~\ref{thm:cert} below. To most easily state this result, we first isolate a certain property of the sparsity pattern of the multipliers $\lam_{ij}$ that holds by construction in our recursion. This property is technical and eases the proof.

\begin{defin}[$*$-sparsity property]\label{def:*sparsity}
	A collection of weights $\{\lam_{i,j}\}_{i,j \in \{0, \dots, n-1,*\}}$ satisfies the \emph{$*$-sparsity property} if $\lam_{i,*}$ for all $i < n-1$. 
\end{defin}	

\begin{theorem}[Recursive gluing for the Silver Stepsize Schedule]\label{thm:cert}
	Let $\kappa \in (1,2) \cup (2, \infty)$. Suppose $\{\sigma_{ij}\}_{i,j \in \{0, \dots, n-1, *\}}$ satisfies $*$-sparsity and certifies the $n$-step rate, i.e.,
	\begin{align}
		\tau_n \|x_0 - x^*\|^2 - \|x_n - x^*\|^2 
		=
		\sum_{i,j \in \{0, \dots, n-1, *\}} \sig_{ij} Q_{ij}
		\, \qquad \text{ for stepsize schedule } h^{(n)}\,.
		\label{eq:cert:n}
	\end{align}
	Then there exists $\{\lam_{ij}\}_{i,j \in \{0, \dots, 2n-1, *\}}$ that satisfies $*$-sparsity and certifies the $2n$-step rate, i.e.,
	\begin{align}
		\tau_{2n} \|x_0 - x^*\|^2 - \|x_{2n} - x^*\|^2 =
		\sum_{i,j \in \{0, \dots, 2n-1, *\}} \lam_{ij} Q_{ij}\, \qquad \text{ for stepsize schedule } h^{(2n)}\,.
		\label{eq:cert:2n}
	\end{align}
	Moreover, this certificate is explicitly given by
	\begin{align}
		\lam_{ij} := \underbrace{\Theta_{ij}}_{\text{gluing}} + \underbrace{\Xi_{ij}}_{\text{rank-one correction}} + \underbrace{\Delta_{ij}}_{\text{sparse correction}}
		\label{eq:recurrence:construction}
	\end{align}
	where the ``gluing component'' $\Theta$ is defined as 
	\begin{align}
		\Theta_{i,j} := \underbrace{\frac{\tau_{2n}}{\tau_n} \sig_{i,j} \cdot \mathds{1}_{i,j \in \{0, \dots, n-1,*\}}}_{\text{recurrence for first $n$ steps}}
		\;+ \underbrace{c \sig_{i-n,j-n} \cdot  \mathds{1}_{i,j \in \{n,\dots,2n-1,*\}}}_{\text{recurrence for second $n$ steps}}\,,
	\end{align}
	the ``rank-one correction'' $\Xi$ is zero except $\{\Xi_{ij}\}_{i \in \{n-1,2n-1,*\}, j \in \{n, \dots, 2n-2\}}$, and the ``sparse correction'' $\Delta$ is zero except $\{\Delta_{ij}\}_{i \neq j \in \{n-1,2n-1,*\}}$. The explicit values of $c$, $\Xi$, $\Delta$ are provided in Appendix~\ref{app:cert}.
\end{theorem}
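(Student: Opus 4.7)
The plan is to verify the certificate identity \eqref{eq:cert:2n} by carefully composing two instances of the $n$-step identity \eqref{eq:cert:n}. As a first pass, I would apply \eqref{eq:cert:n} to the trajectory $x_0, \ldots, x_n$, and a shifted copy to $x_n, \ldots, x_{2n}$. Since $h^{(2n)}$ differs from $[h^{(n)}, h^{(n)}]$ in precisely two positions---the stepsize $b_n$ is replaced by $a_{2n}$ at iteration $n-1$ and by $b_{2n}$ at iteration $2n-1$---these two identities are nearly, but not exactly, what we need. Rescaling the first copy by $\tau_{2n}/\tau_n$ and the second by the appropriate constant $c$, and telescoping through the intermediate quantity $\|x_n - x^*\|^2$, produces the gluing component $\Theta$.

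Next I would identify the two kinds of discrepancies between this naive sum and the target \eqref{eq:cert:2n}. The first is that gluing alone never couples first-half iterates ($i < n$) with second-half iterates ($j \geq n$), yet such co-coercivity constraints are essential to track how earlier gradients influence later progress; the rank-one correction $\Xi$ is designed to inject exactly the missing coupling, concentrated on the rows indexed by $n-1, 2n-1, *$ and the columns indexed by the intermediate second-half iterates $n, \ldots, 2n-2$. The second discrepancy comes from replacing $b_n$ with $a_{2n}$ and $b_{2n}$ at the two ``split'' positions; this perturbation manifests as residual monomials supported on the $6$ pairs among $\{n-1, 2n-1, *\}$, which is precisely where the sparse correction $\Delta$ lives. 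Matching monomials on both sides of \eqref{eq:cert:2n}---quadratic forms in gradients, cross-terms between iterates and gradients, and function-value terms---yields a linear system for $c$, $\Xi$, and $\Delta$; I expect the defining equations \eqref{eq:yz-defining}, namely $y_n z_n = z_{n/2}^2$ and $z_n - y_n = 2(z_{n/2} - z_{n/2}^2)$, to be exactly the consistency conditions that make this system solvable.

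The hard part will be verifying non-negativity of $\lambda_{ij}$. Solving the linear system guarantees the identity but not that $\Theta_{ij} + \Xi_{ij} + \Delta_{ij} \geq 0$. For $\Theta$, non-negativity follows by induction together with the positivity of $\tau_{2n}/\tau_n$ and $c$; for $\Xi$ and $\Delta$ I would unpack the explicit formulas from Appendix~\ref{app:cert} and reduce the required inequalities to algebraic identities in $z_{n/2}, z_n, y_n$ that follow from \eqref{eq:yz-defining} and Lemma~\ref{lem:yz-basic}. The $*$-sparsity property is preserved by inspection of the supports: the gluing sends $*$-column entries of the $n$-step certificate to rows $n-1$ and $2n-1$ only, and both $\Xi$ and $\Delta$ are supported on indices compatible with $*$-sparsity. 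Finally, the exclusion $\kappa \neq 2$ signals a removable degeneracy in the closed-form splitting, which I would handle separately by a continuity argument, taking limits of the certificate as $\kappa \to 2$.
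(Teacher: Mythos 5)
Your overall strategy coincides with the paper's: glue two scaled copies of the $n$-step identity, telescope through $\|x_n - x^*\|^2$ to obtain $\Theta$, and then determine the corrections $\Xi,\Delta$ by matching coefficients. However, two points in your plan need attention.

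First, ``matching monomials'' as you describe it is not obviously tractable. After substituting the GD relations, the certificate identity is an equation between quadratic forms in $\Theta(n)$ primitive variables $x_0, g_0, \dots, g_{2n-1}$, hence $\Theta(n^2)$ monomial coefficients---while your corrections $\Xi$ and $\Delta$ contribute only $\Theta(n)$ free parameters. The paper's crucial device, which your proposal does not anticipate, is that every piece of the residual collapses to a quadratic form in just the four boundary variables $x_{n-1}, g_{n-1}, x_{2n-1}, g_{2n-1}$ (Lemma~\ref{lem:succinct}). The collapse is immediate for the gluing error and the sparse correction, but for the rank-one correction it relies on a nontrivial telescoping: the row pattern of $\Xi$ is chosen so that $\sum_{j=n}^{2n-2} \Xi_{ij} P_{ij}$, which a priori involves all the intermediate $x_j,g_j$, sums up via the GD relation $\sum_{i=n}^{2n-2} \alpha_i g_i = x_{n-1} - x_{2n-1} - \alpha_{n-1} g_{n-1}$ and Lemma~\ref{lem:disjunction} to an expression in the boundary variables only. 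This reduces the verification to the $4\times 4$ matrix identity $E = S + L$, namely $10$ scalar equations (plus three netflow constraints for the function-value terms), which is why the existence of $c,\Xi,\Delta$ can be asserted. Your intuition that ``such co-coercivity constraints are essential to track how earlier gradients influence later progress'' is not the right explanation: $\Xi$ is not adding fresh information, it is cancelling a residual that is already localized at the boundary.

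Second, your $*$-sparsity argument is slightly wrong. You claim ``both $\Xi$ and $\Delta$ are supported on indices compatible with $*$-sparsity,'' but $\Delta_{n-1,*}$ is one of the six nonzero entries of $\Delta$, and $(n-1,*)$ is precisely an index at which $*$-sparsity for the $2n$-step certificate forbids a nonzero value. What actually happens is that $\Delta_{n-1,*} := -\tfrac{\tau_{2n}}{\tau_n}\sig_{n-1,*}$ is defined to \emph{exactly cancel} the gluing contribution $\Theta_{n-1,*}$, so it is a cancellation, not a support condition, that delivers $\lam_{n-1,*}=0$. Your remaining points (induction for non-negativity of $\Theta$, deferring the explicit $\Xi,\Delta$ inequalities to algebra in $z_n,y_{2n},z_{2n}$, continuity at $\kappa=2$) align with the paper.
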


While the explicit values of $\Xi$ and $\Delta$ are somewhat involved, the key point is that they can be expressed as rational functions in just $z_n, y_{2n}, z_{2n}$, see Remark~\ref{rem:simple-yz}. Importantly, since $y_{2n}, z_{2n}$ are explicit algebraic functions of $z_n$ by construction (see \S\ref{ssec:construction:yz}), this turns verifying the claimed identity~\eqref{eq:cert:2n} into a straightforward (albeit tedious) algebraic exercise that is rigorously automatable via standard computer algebra techniques~\citep{cox2013ideals}.

\par A few minor remarks. First, for simplicity, Theorem~\ref{thm:cert} assumes $\kappa \neq 2$. This allows us to multiply and divide by $\kap-2$, which simplifies expressions. 
The rate for $\kappa = 2$ anyways follows immediately from $\kappa = 2+\eps$ for $\eps \downarrow 0$. Second, in~\eqref{eq:recurrence:construction} the notational shorthand $i-n$ is understood to be $*$ when $i=*$.
Third, when it is said that $\lam$ satisfies $*$-sparsity, it is understood that this property corresponds to the horizon of length $2n$, i.e., $\lam_{*,i} = 0$ for all $i < 2n-1$.

Theorem~\ref{thm:cert} immediately implies the convergence rate~\eqref{eq:thm-main:cert} in our main result Theorem~\ref{thm:main}. 

\begin{proof}[Proof of convergence rate in Theorem~\ref{thm:main}]
	The base case of $n=1$ is the classical analysis of GD; see, e.g.,~\citep[Chapter 8]{altschuler2018greed} for a proof in this language of co-coercivities.\footnote{One can also use Theorem~\ref{thm:2step:convex} to take $n=2$ as the base case. Then this paper's proof is fully self-contained.}
	The convergence rate is the textbook unaccelerated rate $\tau_1 = (\tfrac{\kappa - 1}{\kappa + 1})^2$. By induction, Theorem~\ref{thm:2step:convex} implies that $\tau_n$ is a valid convergence rate for the $n$-step Silver Stepsize Schedule, for all $n$ that are powers of $2$.
\end{proof}

Below, in \S\ref{ssec:rate:corrections}, we express the components of the recursively glued certificate as succinct quadratic forms, and then in \S\ref{ssec:rate:pf}, we use this to prove Theorem~\ref{thm:cert}.

\subsection{Recursive gluing}\label{ssec:rate:corrections}

Proving Theorem~\ref{thm:cert} requires establishing that $\lam$ satisfies the identity~\eqref{eq:cert:2n}. Ignoring presently the linear form in the function values (that term is much simpler and addressed in \S\ref{ssec:rate:pf}), this amounts to showing equality of two quadratic forms. Na\"ively, this requires checking equality of \emph{all} coefficients of these quadratic forms---which is painstaking since these are quadratics in all the GD iterates $x_0, \dots, x_{2n-1}, x^*$ and their corresponding gradients $g_0, \dots, g_{2n-1}, g^*$, and moreover are defined over the ideal generated by the GD equations $x_{t+1} = x_t - \alpha_t g_t$. A key observation that removes much of this labor is that \emph{the quadratic forms in our recursive certificate have rank at most $4$.} In fact, these quadratic forms are only in the four variables $x_{n-1}, g_{n-1}, x_{2n-1}, g_{2n-1}$. This reduces the number of coefficients to be checked from $\Theta(n^2)$ to a constant number: $10$. 
\par This observation is formalized in the following lemma, which expresses the quadratic forms via coefficient matrices as this is convenient for book-keeping. For brevity, just as in Theorem~\ref{thm:cert}, the explicit values of these matrices are deferred to the Appendix, but the key point is that each entry can be expressed a rational function of just $z_n, y_{2n}, z_{2n}$, see Remark~\ref{rem:simple-yz}. To isolate the quadratic form component of the co-coercivities, let $P_{ij}$ denote $Q_{ij}$ without its linear component $f_i - f_j$, i.e.,
\begin{align*}
	P_{ij} := 2\langle g_j - \frac{g_i}{\kappa}, g_j - g_i \rangle - \|g_i - g_j\|^2 - \frac{1}{2(\kappa - 1)}\|x_i - x_j\|^2\,.
\end{align*}

\begin{lemma}[Recursive gluing via succinct quadratic forms]\label{lem:succinct} Consider the setup of Theorem~\ref{thm:cert}, let $v := [x_{n-1}, g_{n-1}, x_{2n-1}, g_{2n-1}]^T$, and let $E$, $S$, $L$ be the $4 \times 4$ matrices defined in Appendix~\ref{app:succinct}.
	\begin{itemize}
		\item \underline{Gluing error:} $ \tau_{2n} \|x_0\|^2 - \|x_{2n}\|^2 - \sum_{i,j \in \{0,\dots,2n-1,*\}} \Theta_{ij} P_{ij} = \langle E, vv^T \rangle$
		\item \underline{Sparse correction:} $\sum_{ij} \Delta_{ij}P_{ij} = \langle S, vv^T \rangle$
		\item \underline{Rank-one correction:} $\sum_{ij} \Xi_{ij} P_{ij} = \langle L, vv^T \rangle$
	\end{itemize}
\end{lemma}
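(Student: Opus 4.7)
The lemma asserts three algebraic identities, each reducing a sum of weighted co-coercivities $P_{ij}$ to a quadratic form in the four coordinates of $v$. The plan is to verify each identity directly by exploiting the GD recurrences $x_{t+1} = x_t - h^{(2n)}_t g_t$ and (for the gluing error) the inductive certificate~\eqref{eq:cert:n}; throughout, take $x_* = g_* = 0$ without loss of generality. I would handle the three pieces in order of increasing difficulty: sparse, rank-one, gluing.

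The sparse correction is essentially a tautology. Since $\Delta$ is supported on pairs $(i,j)$ with $i \neq j \in \{n-1, 2n-1, *\}$, every $P_{ij}$ appearing in $\sum_{ij}\Delta_{ij}P_{ij}$ is visibly a quadratic form in the coordinates of $v$ (after $x_* = g_* = 0$), and $S$ is read off by expanding the constant-size sum and collecting coefficients.

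The rank-one correction requires the GD equations. Writing the rank-one factorization as $\Xi_{ij} = u_i w_j$ with $i \in \{n-1, 2n-1, *\}$ and $j \in \{n, \dots, 2n-2\}$, the sum becomes $\sum_i u_i \sum_j w_j P_{ij}$. Since $P_{ij}$ contains $\|g_j\|^2$ and $\|x_j\|^2$ terms involving intermediate iterates, the inner sum does not individually reduce. The structural facts to verify are (i) $\sum_i u_i = 0$, which annihilates the coefficients of $\sum_j w_j \|g_j\|^2$ and $\sum_j w_j \|x_j\|^2$ in the total sum, and (ii) $w_j \propto \alpha_j$, so the remaining linear-in-$g_j$ contributions telescope via GD to $\sum_j \alpha_j g_j = x_n - x_{2n-1} = (x_{n-1} - a_{2n} g_{n-1}) - x_{2n-1} \in \spann(v)$. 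All surviving summands are then quadratic in $v$, yielding $L$ after matching coefficients.

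For the gluing error, I would invoke~\eqref{eq:cert:n} twice: once on the trajectory starting at $x_0$, and once on the shifted trajectory starting at $x_n$. The first $n-1$ stepsizes of $h^{(2n)}$ match $h^{(n)}$ exactly, and similarly for positions $n, \dots, 2n-2$, so each inductive application is valid except for its final perturbed stepsize---$a_{2n}$ in place of $b_n$ on the first half, $b_{2n}$ in place of $b_n$ on the second---each of which contributes a residual quadratic in variables already inside $v$. Weighting the two identities by $\tau_{2n}/\tau_n$ and $c$ as in the definition of $\Theta$ pins down $c$ by the requirement that $\|x_n\|^2$ from the first identity cancel against $\tau_n \|x_n\|^2$ from the second, and the coupling $x_n = x_{n-1} - a_{2n} g_{n-1}$ introduces only $v$-variables into this cancellation. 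The net residual is $\langle E, vv^T\rangle$. The heaviest bookkeeping is precisely this gluing step; but once the identity has been reduced to a quadratic form in the four coordinates of $v$, verification collapses to matching the ten scalar coefficients of a symmetric $4\times 4$ matrix, which by Remark~\ref{rem:simple-yz} is a routine symbolic computation in $z_n$, $y_{2n}$, $z_{2n}$.
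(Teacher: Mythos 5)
Your decomposition of the work into sparse / rank-one / gluing pieces aligns with the paper, and your treatment of the sparse correction and the gluing error is essentially the paper's argument. For the gluing error in particular you have the right idea: apply the inductive certificate twice (once from $x_0$, once from $x_n$), noting that $h^{(2n)}$ agrees with $[h^{(n)}, h^{(n)}]$ except in the $n$-th and $2n$-th stepsizes, so the only residual after the cancellation $\Theta$ is designed to produce is a quadratic in $x_{n-1}, g_{n-1}$ (from $b_n \to a_{2n}$) and in $x_{2n-1}, g_{2n-1}$ (from $b_n \to b_{2n}$). One nitpick: there is no cancellation that ``pins down $c$'' at this stage---$c$ is given, and the residual $E$ simply depends on it; the $\|x_n\|^2$ term does not need to cancel against anything because $x_n = x_{n-1} - a_{2n}g_{n-1}$ already lies in $\spann(v)$.

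The genuine gap is in the rank-one correction. Your structural fact (i) is correct: the rank-one factors satisfy $u_{n-1} + u_{2n-1} + u_* = 1 + r - (1+r) = 0$, which kills the ``$j$-only'' part of $P_{ij}$ (the $\|g_j\|^2$ and $\|x_j\|^2$ terms). But your structural fact (ii), $w_j \propto \alpha_j$, is false. By definition $w_j \propto \prod_{t=n}^{j-1} 1/q_t$ with $q_t = \alpha_t(1 - \alpha_{t+1}/\kappa)/\alpha_{t+1}$; this telescopes to $w_j \propto \alpha_j / \prod_{t=n+1}^{j}(1 - \alpha_t/\kappa)$, and the product factor is genuinely $j$-dependent for the Silver schedule. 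So the surviving sum does \emph{not} reduce to $\sum_j \alpha_j g_j = x_n - x_{2n-1}$, and your argument would stall there. The paper's actual mechanism is subtler: after substituting $x_j = x_{n-1} - \sum_{i < j}\alpha_i g_i$ into $P_{n-1,j} - P_{*,j}$, the $j$-dependent inner product has the form $\langle x_{n-1} - g_{n-1},\, g_j + \tfrac{1}{\kappa}\sum_{i=n}^{j-1}\alpha_i g_i\rangle$, and it is the weighted sum $\sum_j w_j\bigl(g_j + \tfrac{1}{\kappa}\sum_{i<j}\alpha_i g_i\bigr)$---\emph{with the accumulated history term included}---that telescopes, via the identity $\sum_{j} \tfrac{\alpha_j}{\kappa}\prod_{t>j}(1 - \alpha_t/\kappa) = 1 - \prod_t(1-\alpha_t/\kappa)$ (Lemma~\ref{lem:disjunction}), to a scalar multiple of $x_{n-1} - x_{2n-1} - \alpha_{n-1}g_{n-1} \in \spann(v)$. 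Without incorporating the history term, there is no telescoping, so this part of the argument needs to be re-done from scratch.
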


For brevity, we defer the proof of the sparse and low-rank corrections to Appendix. However, we provide the proof of the gluing error here to provide intuition for why these quadratic forms have constant rank rather than the a priori upper bound of $\Theta(n)$. In particular, the proof shows how the low rank arises from the recursive construction of the Silver Stepsize Schedule that creates $h^{(2n)}$ from $h^{(n)}$, modulo only changing the $n$-th and $2n$-th stepsizes (each increases the rank by $2$).

\begin{proof}[Proof of gluing error for Lemma~\ref{lem:succinct}]
	Denote by $\tilde{x}_n := x_{n-1} - b_n g_{n-1}$ and $\tilde{x}_{2n} := x_{2n-1} - b_n g_{2n-1}$ the iterates obtained by running GD with the Silver Stepsize Schedule $h^{(n)}$ from initializations $x_0$ and $x_n$, respectively. By definition of $\sig$ as a certificate for the $n$-step rate,
	\begin{align*}
		\sum_{i,j \in \{0, \dots, n-1, *\}} \sig_{ij} P_{ij} &= \tau_n \|x_0\|^2 - \|\tilde{x}_n\|^2
		\qquad \text{and } \qquad \sum_{i,j \in \{n, \dots, 2n-1, *\}} \sig_{ij} P_{ij} &= \tau_n \|x_n\|^2 - \|\tilde{x}_{2n}\|^2\
	\end{align*}
	Thus the desired quantity is equal to
	\begin{align*}
		\Big( \tau_{2n} \|x_0\|^2 - \|x_{2n}\|^2 \Big) - \sum_{i,j \in \{0,\dots,2n-1,*\}} \Theta_{ij} P_{ij}
		&= \Big( \tau_{2n} \|x_0\|^2 - \|x_{2n}\|^2 \Big) 
		- \frac{\tau_{2n}}{\tau_n}\Big( \tau_n \|x_0\|^2 - \|\tilde{x}_n\|^2 \Big) - c\Big( \tau_n \|x_n\|^2 - \|\tilde{x}_{2n}\|^2 \Big)
		\\ &= \Big( \frac{\tau_{2n}}{\tau_n} \|\tilde{x}_n \|^2 - c\tau_n \|x_n\|^2 \Big) + \Big( c\|\tilde{x}_{2n}\|^2 - \|x_{2n}\|^2 \Big)\,.
	\end{align*}
	Now by definition of GD, $x_n = x_{n-1} - a_{2n} g_{n-1}$, $x_{2n} = x_{2n-1} - b_{2n} g_{2n-1}$, $\tilde{x}_n = x_{n-1} - b_n g_{n-1}$, and $\tilde{x}_{2n} = x_{2n-1} - b_n g_{2n-1}$.  By plugging this into the above display and expanding the square, we see that the discrepancy between the $\|\tilde{x}_n\|^2$ and $\|x_n\|^2$ terms creates a quadratic form in just $x_{n-1}, g_{n-1}$, and similarly the discrepancy between the $\|\tilde{x}_{2n}\|^2$ and $\|x_{2n}\|^2$ terms creates a quadratic form in just $x_{2n-1}, g_{2n-1}$. Tracking coefficients completes the proof. 
\end{proof}

\subsection{Certificate verification}\label{ssec:rate:pf}

\begin{proof}[Proof of Theorem~\ref{thm:cert}]
	The non-negativity and $*$-sparsity properties of $\lam$ are direct from the explicit values of $\lam$; details in Appendix~\ref{app:explicit}. It therefore suffices to check the rate certificate~\eqref{eq:cert:2n}. By definition of the co-coercivity $Q_{ij}$, this certificate has two components: a linear form in $\{f_i\}_{i \in \{0,\dots,2n-1,*\}}$ and a quadratic form in $\{x_i, g_i\}_{i \in \{0, \dots, 2n-1,*\}}$. We check these two components below.

	\paragraph*{Quadratic form in iterates and gradients.} By Lemma~\ref{lem:succinct}, it suffices to show that
	\begin{align}
		E-S-L = 0\,,
	\end{align}
	where $E, S, L$ are the matrices defined in Appendix~\ref{app:succinct}.
	This amounts to checking the $10$ entries on or above the diagonal of these $4 \times 4$ matrices---elements below the diagonal need not be checked as the matrices are symmetric. By Lemma~\ref{lem:explicit}, these entries can be expressed as rational functions in $z_n, y_{2n}, z_{2n}$, which are polynomially related via~\eqref{eq:yz-defining}. Therefore, checking that these $10$ entries vanish amounts to checking that certain polynomials vanish modulo an associated ideal. This verification is rigorously automatable using standard techniques from computational algebraic geometry such as Gr\"obner bases; see e.g.~\citep{cox2013ideals,KreuzerRobbiano}. A simple script for Mathematica (or other computer algebra systems) that verifies these identities is available at the URL given in the references~\citep{MathematicaURL}. We emphasize that this is purely in the interest of brevity: verifying these identities can be done by hand, as it just amounts to straightforward (albeit tedious) algebraic cancellations.

	\paragraph*{Linear form in function values.} Recall that each $Q_{ij}$ contributes $2(M-m)(f_i - f_j)$. Thus, in order to show that all function values vanish in $\sum_{ij} \lam_{ij} Q_{ij}$, it is equivalent to show that
	\begin{align}
		\sum_{j} \lam_{ij} = \sum_j \lam_{ji} \,, \qquad \forall j \in \{0, \dots, 2n-1,*\}\,.
		\label{eq:pf-cert:netflow}
	\end{align}
	That is, the $j$-th row and column sums of $\lam$ must match, for all $j$. We call refer to these identities as \emph{netflow constraints}.
	Since $\sig$ is a valid certificate, it satisfies the netflow constraints $\sum_j \sig_{ij} = \sum_j \sig_{ji}$ for all $j \in \{0, \dots, n-1,*\}$. Thus, by construction of $\Theta$ from $\sig$, it follows that $\Theta$ satisfies the netflow constraints $\sum_j \Theta_{ij} = \sum_j \Theta_{ji}$ for all $i \in \{0, \dots, 2n-1,*\}$. Therefore, in order to prove~\eqref{eq:pf-cert:netflow}, it is equivalent to prove the netflow constraints for $\Xi + \Delta$; that is,
	\begin{align}
		\sum_j (\Xi_{ij} + \Delta_{ij}) = \sum_j (\Xi_{ji} + \Delta_{ji}), \qquad \forall i \in \{0, \dots, 2n-1,*\}\,.
		\label{eq:pf-cert:netflow-2}
	\end{align}
	The cases $i \in \{0, \dots, n-2\}$ are trivial since on these rows and columns, $\Xi$ and $\Delta$ are identically zero. The cases $i \in \{n, \dots, 2n-2\}$ are similarly trivial because on these rows and columns, $\Delta$ is identically zero and $\sum_j (\Xi_{ji} - \Xi_{ij}) = 	\Xi_{n-1,i} + \Xi_{2n-1,i} + \Xi_{*,i}
	= 0$ by construction of $\Xi$. It remains only to prove~\eqref{eq:pf-cert:netflow-2} for $i \in \{n-1,2n-1,*\}$. By the sparsity patterns of $\Xi$ and $\Delta$, this amounts to showing 
	\begin{align}
		\sum_{j \in \{n-1,2n-1,*\} \setminus \{i\}} \left(\Delta_{ij} - \Delta_{ji} \right) + \sum_{j=n}^{2n-2} \Xi_{ij} = 0, \qquad \forall i \in \{n-1,2n-1,*\}\,.
		\label{eq:pf-cert:netflow-simple}
	\end{align}
	By Lemma~\ref{lem:explicit}, these quantities can be expressed as rational functions in $z_n,y_{2n},z_{2n}$, which are polynomially related via~\eqref{eq:yz-defining}. 
	Therefore, checking that the three quantities vanish in~\eqref{eq:pf-cert:netflow-simple} amounts to checking that three polynomials vanish modulo an ideal. As mentioned above, this verification is rigorously automatable using standard computational algebra techniques; see the same URL \citep{MathematicaURL} for a simple script implementing this computation.

\end{proof}

\section{Future work}\label{sec:future}

This work removes a key stumbling block in previous analyses of optimization algorithms: we show that directly analyzing \emph{multi-step descent} can lead to improved convergence analyses. This general principle opens up a number of directions in both the design and analysis of optimization algorithms. We list a few here.

\paragraph*{Beyond GD.} Do these techniques extend to stochastic settings where gradients are noisy or only computed approximately? This is motivated by modern machine learning settings such as empirical risk minimization. What about constrained settings where projections are interleaved? Or other settings where one uses coordinate descent, proximal steps, etc.? What about second-order methods such as Newton or Interior Point methods? The modern optimization toolbox is broad, and the algorithmic opportunity of faster multi-step descent that we establish warrants re-investigating many existing algorithms that use greedy analyses.

\paragraph*{Beyond convexity.} While our techniques extend to the convex setting (see \S\ref{sssec:intro:dis:setting}), it is less clear if extensions to non-convex settings are also possible. In particular, can one prove accelerated rates for converging to an stationary point? Could this justify empirical phenomena observed in neural network training such as super-acceleration from cyclic stepsize schedules~\citep{smith2019super,smith2017cyclical}? 

\paragraph*{Faster convergence for restricted function classes.} Is faster convergence possible if the objective function is more structured? One well-motivated direction here is low-dimensional objective functions. 
It is known that faster asymptotic convergence is possible if the dimension $d$ is fixed and the number of iterations $n \to \infty$, e.g., via cutting planes. Recent work has shown that certain momentum-based modifications to GD can also surpass standard lower bounds~\citep{nem-yudin} for sufficiently large $n$~\citep{peng2023nesterov}.
Do such phenomena extend to GD with dynamic stepsizes? Altschuler's thesis~\citep[Chapter 6]{altschuler2018greed} proved that for univariate convex functions (or more generally, separable convex functions), GD achieves the fully accelerated rate $\Theta(\sqrt{\kappa} \log 1/\eps)$ via a certain (random) dynamic choices of stepsizes. Does this extend to higher dimension? What is the fundamental trade-off between $n$, $d$, and the convergence rate?

\paragraph*{Robustness.} The Silver Stepsize Schedule periodically uses extremely large step sizes, which are overly aggressive in isolation, but effective when combined with other short steps. It is natural to wonder if this dependence between iterations makes such strategies more sensitive to model misspecification, noisy gradients, inexact arithmetic, or other considerations in practical implementations. We expect this may occur, since it does for other accelerated algorithms, see e.g.,~\citep{devolder2014first}.

	\paragraph*{Acknowledgements.} JMA is grateful to his friends for their patience over the past seven years as he continually complained about how hard this problem was. 
	
	\newpage
	\appendix
	\section{Deferred details for \S\ref{sec:rate}}\label{app:rate}

Here we prove the matching lower bounds in Theorem~\ref{thm:rate}.

\paragraph*{Acceleration regime.} Suppose $n \leq n^*$. Then
	\begin{align}
		\tau_n
		= (1 - h_i)^2
		\geq
		\exp\left( - 4h_i \right)
		\geq
		\exp\left( - 8 \rho^i / \kappa \right)
		= \exp\left( - 8 n^{\log_2 \rho} / \kappa \right)
		\nonumber
		\,.
	\end{align}
	Above, the first step is by~\eqref{eq:rate:tau-h}. The third step is by the upper bound in Lemma~\ref{lem:dynamics-acceleration} and the initialization upper bound $h_0 \leq 2/\kappa$ from~\eqref{eq:pf-rate:init}. The fourth step is by definition of $i = \log_2 n$. It remains to argue the second step. This is due to the elementary inequality $1-h \geq \exp(-2h)$ which holds for $h \in (0,2/3)$ and is applicable since
	\[
	h_i \leq \rho^i h_0 \leq \rho^{i^*} h_0 \leq 2/3\,.
	\]
	Here, we used same upper bound in Lemma~\ref{lem:dynamics-acceleration}, the same initialization upper bound $h_0 \leq 2/\kappa$, and, critically, the fact that $i \leq i^*$ since we are in the acceleration regime. 
	\paragraph*{Saturation regime.} Suppose $n > n^*$. Then
	\begin{align*}
		\tau_{2n} = \left( 1 - h_{i+1}\right)^2
		\geq  \left( (1-h_i)^2 - (1-h_i)^4 \right)^2
		= \tau_n^2 (1 - \tau_n)^2
		\geq 
		\frac{\tau_n^2}{16}\,.
	\end{align*}
	where the first and third steps are by~\eqref{eq:rate:tau-h}, the second step is by the lower bound in Lemma~\ref{lem:dynamics-saturation}, and the final step is because $\tau_n \leq \tau_{n^*} \leq \exp(-1/3)$ by the rate monotonicity in Corollary~\ref{cor:rate-monotonicity} and the upper bound we proved for $\tau_{n^*}$ in the acceleration regime. By unrolling this recursion from $n$ to $n^*$, continuing to crudely bound constants for simplicity of exposition, and then plugging in the lower bound $\tau_{n^*} \geq \exp( - 8)$ from the acceleration regime, we conclude
	\begin{align}
		\tau_{n} \geq
		\left( \frac{\tau_{n^*}}{16}\right)^{n/n^*} \geq \exp\left( - \frac{n}{2n^*} \right)\,.
		\nonumber
	\end{align}

\section{Deferred details for \S\ref{sec:cert}}\label{app:cert}

Here we provide the deferred details for the proof of Theorem~\ref{thm:cert}. See \S\ref{sec:cert} for a proof overview. Three remarks on notation in this Appendix. First, after a possible translation of both $f$ and $x_0$, we assume without loss of generality that $x^* = 0$. Second, it is convenient to define the shorthand
\[
q_i := \frac{\alpha_i (1 - \tfrac{\alpha_{i+1}}{\kappa})}{\alpha_{i+1}}\,,
\]
where $\alpha_0, \dots, \alpha_{2n-1}$ index the $2n$-length Silver Stepsize Schedule $h^{(2n)}$. Third, as is standard convention, products over the empty set such as $\prod_{t=n}^{n-1} q_t$ have value $1$.

\par We begin by explicitly stating the correction components used in the recursive gluing. It is convenient to provide two equivalent versions of these expressions. In the first version (Definition~\ref{def:cert}), the low-rank correction $\Xi$ is explicitly defined for every entry, and the sparse correction $\Delta$ is typically defined as something minus the gluing component $\Theta$. Such expressions for $\Delta$ are convenient for computing the final certificate $\lam$ because $\Theta$ cancels. In the second version (Lemma~\ref{lem:explicit}), $\Xi$ is given only through its row sums (which is the only way $\Xi$ is needed for the proof of Theorem~\ref{thm:cert}), and $\Delta$ is given via explicit expressions for the subtracted entries of $\Theta$. The key benefit of this second version is that it provides explicit expressions in terms of $z_n,y_{2n},z_{2n}$ for all quantities required in the proof of Theorem~\ref{thm:cert}. For easy recall, we isolate this important fact in the following remark.

\begin{remark}[Explicit rational functions of $z_n, y_{2n}, z_{2n}$]\label{rem:simple-yz}
	While the expressions in Definition~\ref{def:cert} are somewhat involved, the key point is that all the quantities that are required in the proofs in \S\ref{sec:cert} can be expressed as rational functions in $z_n, y_{2n}, z_{2n}$. This is Lemma~\ref{lem:explicit}.
	(Note that $\tau_n, \tau_{2n}$ are by definition rational functions of $z_n, z_{2n}$, and also note that $\Xi$ is needed only through its row sums.) The upshot is that since $z_n, y_{2n}, z_{2n}$ are polynomially related by construction (see \S\ref{ssec:construction:yz}), these expressions make the rate verification a routine and rigorously automatable algebraic exercise.
\end{remark}

\begin{defin}[Corrections to the recursive gluing in Theorem~\ref{thm:cert}]\label{def:cert}
	The ``low-rank correction'' $\Xi$ is defined to be zero except that for all $j \in \{n,\dots,2n-2\}$,
	\begin{itemize}
		\item $\Xi_{n-1,j} := \phi /  \prod_{t=n}^{j-1} q_t$
		\item $\Xi_{2n-1,j} := r \phi/  \prod_{t=n}^{j-1} q_t$
		\item $\Xi_{*,j} := -(1 + r) \phi /  \prod_{t=n}^{j-1} q_t$
	\end{itemize}
	The ``sparse correction'' $\Delta$ is zero everywhere except for the following entries:
	\begin{itemize}
		\item  $\Delta_{n-1,2n-1} := 
		\phi \frac{\kap-2}{\kap(1-z_n)}$
		\item $\Delta_{2n-1,n-1} := \phi (\kap-2) \frac{y_{2n}}{1-z_n}$
		\item $\Delta_{*,n-1} := \tau_{2n}\frac{1 + \kap y_{2n}}{1 - z_n} - \frac{\tau_{2n}}{\tau_n} \sig_{*,n-1}$
		\item $\Delta_{*,2n-1} :=\frac{1+(\kap-1)z_{2n} + \kap z_{2n}^2}{(1+z_{2n})^2} -c \sig_{*,n-1}$
		\item $\Delta_{n-1,*} := - \frac{\tau_{2n}}{\tau_n} \sig_{n-1,*}$
		\item $\Delta_{2n-1,*} := \frac{2\kappa z_{2n}}{(1+z_{2n})^2} - c \sig_{n-1,*}$
	\end{itemize}
	In the above, we used as shorthand the following special values:
	\begin{itemize}
		\item $r := 1/\prod_{t=0}^{n-1} q_t$
		\item $c := \frac{\tau_{2n}}{\tau_n} \left[ r + (1 + r) \left( \frac{z_{2n} + z_n}{z_{2n} - z_n} \right) \right]$
		\item $\phi := \tau_{2n} \frac{\kappa}{\kappa - 2} \left(\frac{z_{2n} + z_n}{z_{2n} - z_n}\right)$ 
	\end{itemize}
\end{defin}

We now state the alternative expressions discussed in Remark~\ref{rem:simple-yz}.

\begin{lemma}[Alternative expressions in terms of $z_n,y_{2n},z_{2n}$]\label{lem:explicit}
	The following identities hold:
	\begin{itemize}
		\item $\Delta_{n-1,2n-1} = \frac{\tau_{2n}}{1-z_n} \left( \frac{z_{2n} + z_n}{z_{2n} - z_n}\right)$
		\item $\Delta_{2n-1,n-1} =\frac{  \kappa y_{2n }\tau_{2n}}{1-z_n}\left( \frac{z_{2n} + z_n}{z_{2n} - z_n}\right)$
		\item $\Delta_{*,n-1} = \frac{\tau_{2n}}{1-z_n}(1 + \kappa y_{2n}) - \tau_{2n}  \frac{ 1 + (\kap - 1)z_n + \kap z_n^2 }{(1-z_n)^2} $
		\item $\Delta_{*,2n-1} = \tau_{2n} \frac{1 + (\kap - 1) z_{2n} + \kap z_{2n}^2 }{(1-z_{2n})^2} - c \tau_n \frac{1 + (\kap - 1) z_n + \kap z_n^2}{(1 - z_n)^2}$
		\item $\Delta_{n-1,*} = -  \frac{2 \kappa z_n\tau_{2n}}{(1-z_n)^2}$
		\item $\Delta_{2n-1,*} = \frac{2 \kappa z_{2n} \tau_{2n}}{(1 - z_{2n})^2} - c \frac{2\kappa z_n \tau_n}{(1-z_n)^2}$
		\item$\sum_{j=n}^{2n-2} \Xi_{n-1,j} 
		= 
		\frac{1}{r} \sum_{j=n}^{2n-2} \Xi_{2n-1,j} 
		= 
		-\frac{1}{1+r}  \sum_{j=n}^{2n-2} \Xi_{*,j}
		=
		\tau_{2n} \left( \frac{\kappa z_n - 1}{1 - z_n} \right) \left( \frac{z_{2n} + z_n}{z_{2n} - z_n} \right)$ and is $0$ for $n=1,2$
		\item $r = \frac{1-z_n}{1-z_{2n}}$ 
	\end{itemize}
	\end{lemma}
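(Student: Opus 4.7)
My plan is to prove the lemma by direct algebraic manipulation, organized into four steps. The six $\Delta$-identities follow by substituting the closed-form expressions in Definition~\ref{def:cert} together with an inductive hypothesis on two entries of the previous-stage certificate, namely $\sig_{*,n-1} = (1+(\kappa-1)z_n+\kappa z_n^2)/(1+z_n)^2$ and $\sig_{n-1,*} = 2\kappa z_n/(1+z_n)^2$. The base case $n=1$ comes from the textbook one-step GD certificate evaluated at $z_1 = 1/\kappa$, and the induction closes once the alternative formulas for $\Delta_{*,2n-1}$ and $\Delta_{2n-1,*}$ have been verified: adding the corresponding $\Theta$-contributions then shows that $\lam_{*,2n-1}$ and $\lam_{2n-1,*}$ satisfy the same pattern with $z_n$ replaced by $z_{2n}$, propagating the inductive hypothesis one level.

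With the inductive hypothesis in hand, the six $\Delta$-identities are routine. Plugging in $\phi = \tau_{2n}\kappa(z_{2n}+z_n)/[(\kappa-2)(z_{2n}-z_n)]$, the factor $(\kappa-2)$ cancels in the numerators of $\Delta_{n-1,2n-1}$ and $\Delta_{2n-1,n-1}$ to give the stated forms. The remaining $*$-entries follow by substituting the inductive values of $\sig_{*,n-1}$ and $\sig_{n-1,*}$ and simplifying using $\tau_n = ((1-z_n)/(1+z_n))^2$; in each case, the term $(\tau_{2n}/\tau_n)\sig_{*,n-1}$ collapses to $\tau_{2n}(1+(\kappa-1)z_n+\kappa z_n^2)/(1-z_n)^2$ and similarly for $\sig_{n-1,*}$.

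For $r = (1-z_n)/(1-z_{2n})$ and the $\Xi$ row sums, I would argue by telescoping. Expanding $\prod_{t=0}^{n-1} q_t = (\alpha_0/\alpha_n)\prod_{s=1}^n(1-\alpha_s/\kappa)$ and using the recursive construction $h^{(2n)} = [\tilde{h}^{(n)}, a_{2n}, \tilde{h}^{(n)}, b_{2n}]$, the first stepsize of the second copy of $\tilde{h}^{(n)}$ matches that of the first copy, so $\alpha_0 = \alpha_n$ and the leading ratio is $1$. The identity $1-\psi(t)/\kappa = (\kappa-1)/(\kappa(1+t))$ converts the remaining product into one over the normalized stepsizes $1+t_s$, which I would evaluate inductively in terms of $z_n, y_{2n}, z_{2n}$ using the splitting equations $y_{2n}z_{2n} = z_n^2$ and $z_{2n}-y_{2n} = 2(z_n - z_n^2)$. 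For the $\Xi$ row sums, since the three sums differ only by the scalar factors $1, r, -(1+r)$, it suffices to evaluate $\sum_{j=n}^{2n-2}\Xi_{n-1,j} = \phi\sum_{j=n}^{2n-2}\prod_{t=n}^{j-1}q_t^{-1}$. The same recursive telescoping logic collapses the partial products, and multiplying by $\phi$ yields the claimed expression; the vanishing at $n=1$ is trivial (empty sum), and the small case $n=2$ is handled by a short direct check using the explicit values of $y_4, z_4$.

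The main obstacle will be executing the telescoping for $\prod_{s=1}^n(1+t_s)$, since the normalized stepsizes $t_s$ appear in fractal order determined by the binary expansion of $s$ (see \S\ref{sssec:intro:dis:schedule}). I expect the cleanest approach is an auxiliary induction on the recursive structure of the Silver Stepsize Schedule, tracking the partial products $\prod_{s=1}^k(1+t_s)$ as rational functions in $z_n, y_{2n}, z_{2n}$ that simplify via the defining equations of \S\ref{ssec:construction:yz}. Once this closed form is established, both the formula for $r$ and the $\Xi$-row-sum identity follow as short corollaries.
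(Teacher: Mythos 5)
Your proposal takes a genuinely different route from the paper, and the difference is worth understanding. The paper never performs any structural induction on the fractal Silver schedule. Instead, Lemma~\ref{lem:identities-opt} \emph{forces} the values of $\sigma_{*,j}$ (for all $j$, not only $j=n-1$) and $\sigma_{n-1,*}$ by expanding the $n$-step certificate identity~\eqref{eq:cert:n} modulo the GD equations and matching the coefficients of $\|x_0\|^2$ and $\langle x_0, g_i\rangle$. The netflow constraint $\sum_j \sigma_{*,j} = \sum_j \sigma_{j,*}$ (valid for any certificate) then immediately yields $\sum_{j=0}^{n-2}\prod_{t=j}^{n-3}q_t = \kappa z_n - 1$, and from there the remaining $q_i$-identities in Lemma~\ref{lem:identities-qi} follow by one telescoping step using Lemma~\ref{lem:disjunction}. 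In other words, the paper bootstraps the combinatorics of the fractal schedule out of the algebra of the certificate, rather than facing the binary-expansion order head on.

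Your plan has two pieces. The first piece -- inducting only on the two entries $\sigma_{*,n-1}$, $\sigma_{n-1,*}$ and closing the induction through $\lambda_{*,2n-1}$, $\lambda_{2n-1,*}$ -- is sound. (In fact the closure is immediate from Definition~\ref{def:cert}: $\Delta_{*,2n-1}$ and $\Delta_{2n-1,*}$ are defined precisely so that the $c\,\sigma_{\cdot,\cdot}$ term cancels against $\Theta$, leaving the clean $z_{2n}$-expression; you do not need the ``alternative'' form to close the loop.) This reproduces the two $\sigma$-values without invoking coefficient-matching, which is a legitimate alternative.

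The second piece is where the gap is. You explicitly flag ``the main obstacle'' of telescoping $\prod_s(1+t_s)$ through the fractal order, and the proposal ends with a plan rather than an argument. For $r$ alone the structural recursion is quite clean: writing $P_n := \prod_{t=0}^{n-2}(1-\alpha_t/\kappa)$ over $\tilde h^{(n)}$, the recursion $\tilde h^{(2n)} = [\tilde h^{(n)}, a_{2n}, \tilde h^{(n)}]$ gives $P_{2n} = P_n^2(1-a_{2n}/\kappa)$, and combined with $(1-z_n)^2 = (1-z_{2n})(1+y_{2n})$ from~\eqref{eq:yz-defining} this yields $P_n = \kappa(1-z_n)/(\kappa-1)$ and hence $r = (1-z_n)/(1-z_{2n})$. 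But the $\Xi$ row sums need the \emph{partial sum} $\sum_{j=n}^{2n-2}\prod_{t=n}^{j-1}q_t^{-1}$, and a direct structural induction over the fractal requires tracking a weighted running sum across the splice points, not just the running product. That is exactly the step you have not carried out, and it is precisely the step the paper sidesteps entirely by reading it off the netflow constraint. As written, the proposal is a plausible but incomplete alternative proof; you should either fill in the partial-sum recursion (it does close, but with nontrivial bookkeeping) or import the paper's netflow argument for that one identity.
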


	These equivalent expressions make it straightforward to prove the deferred parts of Theorem~\ref{thm:cert}.

\begin{proof}[Proof of non-negativity and $*$-sparsity in Theorem~\ref{thm:cert}]
	\underline{Checking $*$-sparsity.} Since $\sig$ satisfies $*$-sparsity, it follows immediately that $\lam_{i,*} = 0$ for all $i < 2n-1$ except possibly $i = n-1$. For this remaining case, the definition of the sparse correction $\Delta$ ensures $\lam_{n-1,*} = 0$. 
	
	\underline{Checking non-negativity.} First observe that $q_i, r, c, \phi$ are all non-negative by the bounds $z_n \leq z_{2n} \leq 1$ in Lemma~\ref{lem:yz-basic} and the bounds $ 1 \leq a_n,b_n \leq \kappa$ in Lemma~\ref{lem:ab-basic}. Next, note that all $\Theta_{ij}$ are non-negative since they are a positive multiple of some entry of $\sig$, which is non-negative by assumption of $\sig$ being a valid certificate. Thus we need only check non-negativity of $\lam_{ij} = \Theta_{ij} + \Xi_{ij} + \Delta_{ij}$ on the entries where either the correction $\Xi$ or $\Delta$ is non-zero. This non-negativity is clear from the construction for all of the sparsely-corrected entries $\lam_{ij}$ where $i \neq j \in \{n-1,2n-1,*\}$ as well as nearly all of the rank-one-corrected entries $\lam_{ij}$, namely for all $i \in \{n-1,2n-1\}$ and $j \in \{n,\dots,2n-2\}$. 
	\par It remains only to prove non-negativity of $\lam_{*,j}$ for $j \in \{n,\dots,2n-2\}$. Since $\Delta_{*,j} = 0$, this amounts to showing that $\Theta_{*,j} \geq \Xi_{*,j}$, i.e., $c \sig_{*,j-n} \geq (1+r) \phi / \prod_{t=0}^{j-1} q_t$. This follows by plugging in the explicit formulas for $\sig_{*,j}$ in Lemma~\ref{lem:explicit} and the definitions of $r$, $c$, $\phi$ in Theorem~\ref{thm:cert}.
\end{proof}	
	
	The rest of this Appendix section is organized as follows. In~\ref{app:helper-opt} and~\ref{app:helper-qi}, we provide two helper lemmas. The former explicitly computes the values of all co-coercivity multipliers to/from optimum for the $n$-step certificate $\sigma$. The latter provides useful identities involving sums and products of $q_i$. We then use these two helper lemmas in~\ref{app:explicit}  to prove the alternative expressions in Lemma~\ref{lem:explicit}, and in~\ref{app:succinct} to prove the succinct quadratic form representations in Lemma~\ref{lem:succinct}.

\subsection{Helper lemma: co-coercivities involving $x^*$}\label{app:helper-opt}

Here we compute all co-coercivity multipliers $\sig_{t,*}$ and $\sig_{*,t}$ between GD iterates $x_t$ and $x_*$.

\begin{lemma}[Co-coercivity multipliers to/from $x^*$]\label{lem:identities-opt} Consider the setup of Theorem~\ref{thm:cert}. Then:
	\begin{itemize}
		\item $\sig_{j,*} = 0$ for all $j \in \{0, \dots, n-2\}$
		\item $\sig_{n-1,*} = \frac{2\kappa z_{n}}{(1+z_{n})^2}$
		\item $\sig_{*,j} =  \frac{( \prod_{t=j}^{n-3} q_t ) (1-z_n)}{(1+z_n)^2}$ for all $j \in \{0, \dots, n-3\}$
		\item $\sig_{*,n-2} = \frac{1-z_n}{(1+z_n)^2}$
		\item $\sig_{*,n-1} = \frac{ 1 + (\kappa - 1) z_{n} + \kappa z_{n}^2 }{(1 + z_{n})^2}$ 
	\end{itemize}
\end{lemma}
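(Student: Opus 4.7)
The plan is to induct on $n$ over powers of $2$, using Theorem~\ref{thm:cert} to propagate the claimed closed-form expressions from the $n$-step certificate $\sig$ to the $2n$-step certificate $\lam = \Theta+\Xi+\Delta$. Since the recursion uniquely determines every entry of the certificate at every horizon once the base case $n=1$ is fixed, this is the natural framework. The base case $n=1$ renders the third and fourth bullets vacuous and collapses the remaining three, after plugging in $z_1=1/\kap$, to the single claim $\sig_{0,*}=\sig_{*,0}=\tfrac{2\kap^2}{(\kap+1)^2}$. This is the textbook one-step GD analysis: setting up the rate identity $\tau_1\|x_0\|^2-\|x_1\|^2=\sig_{0,*}Q_{0,*}+\sig_{*,0}Q_{*,0}$ and matching coefficients of $f_0$, $\langle g_0,x_0\rangle$, $\|g_0\|^2$, and $\|x_0\|^2$ uniquely forces that common value.

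For the inductive step, the $\lam_{j,*}$ claims follow mechanically from the sparsity of the corrections $\Xi,\Delta$ together with the inductive hypothesis: for $j\leq n-2$ all three summands vanish since $\sig_{j,*}=0$; for $j=n-1$ the sparse correction $\Delta_{n-1,*}=-\tfrac{\tau_{2n}}{\tau_n}\sig_{n-1,*}$ exactly cancels $\Theta_{n-1,*}$; for $n\leq j\leq 2n-2$ the gluing $\Theta_{j,*}=c\,\sig_{j-n,*}$ vanishes by IH; and for $j=2n-1$ the $c\,\sig_{n-1,*}$ terms in $\Theta$ and $\Delta$ cancel, leaving $\tfrac{2\kap z_{2n}}{(1+z_{2n})^2}$. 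The $\lam_{*,j}$ claims are more intricate and split into four ranges. For $j\leq n-3$ and $j=n-2$ only $\Theta_{*,j}=\tfrac{\tau_{2n}}{\tau_n}\sig_{*,j}$ contributes, and matching against the target (after invoking IH and $\tau_k=((1-z_k)/(1+z_k))^2$) reduces to the single telescoping identity
\[
\prod_{t=n-2}^{2n-3}q_t^{(2n)} \;=\; \frac{1-z_{2n}}{1-z_n},
\]
using along the way $q_t^{(2n)}=q_t^{(n)}$ for $t\leq n-3$ and the periodicity $q_{t+n}^{(2n)}=q_t^{(2n)}$ inherent in $h^{(2n)}=[\tilde h^{(n)},a_{2n},\tilde h^{(n)},b_{2n}]$. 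For $j=n-1$ the sparse correction $\Delta_{*,n-1}$ enters, and after peeling off $q_{n-2}^{(2n)}$ via the telescoping identity the target reduces to the local identity $q_{n-2}^{(2n)}(1+\kap y_{2n})=1$, a direct consequence of $a_{2n}=\psi(y_{2n})$ and $1-\psi(t)/\kap=(\kap-1)/[\kap(1+t)]$. For $n\leq j\leq 2n-3$ the rank-one correction $\Xi_{*,j}$ joins $\Theta_{*,j}=c\,\sig_{*,j-n}$, and substituting the explicit values of $c,r,\phi$ from Definition~\ref{def:cert} reduces matters to a rational identity in $z_n,y_{2n},z_{2n}$ modulo the defining ideal $y_{2n}z_{2n}=z_n^2$, $z_{2n}-y_{2n}=2(z_n-z_n^2)$.

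The principal obstacle is the telescoping identity above. I would prove it as a separate sub-lemma by an independent induction on $n$. The starting manipulation
\[
\prod_{t=0}^{n-1}q_t^{(2n)} \;=\; \tfrac{\alpha_0^{(2n)}}{\alpha_n^{(2n)}}\prod_{t=1}^{n}\bigl(1-\tfrac{\alpha_t^{(2n)}}{\kap}\bigr) \;=\; \prod_{t=1}^{n}\bigl(1-\tfrac{\alpha_t^{(2n)}}{\kap}\bigr)
\]
uses the periodicity $\alpha_0^{(2n)}=\alpha_n^{(2n)}$ built into the recursive schedule; applying $1-\psi(t)/\kap=(\kap-1)/[\kap(1+t)]$ then expresses each factor in terms of the $y$- or $z$-subscripts dictated by the fractal pattern of $h^{(2n)}$. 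A further induction that peels off the outermost level of the recursion and invokes the defining equations $y_{2n}z_{2n}=z_n^2$ and $z_{2n}-y_{2n}=2z_n(1-z_n)$ collapses the product to $(1-z_{2n})/(1-z_n)$. Once this sub-lemma is in hand, all remaining identities in the main induction reduce to routine rational manipulations in $\kap, z_n, y_{2n}, z_{2n}$ modulo the defining polynomial ideal, verifiable by hand or via standard computer-algebra tools in the same spirit as the other identity checks in \S\ref{sec:cert}.
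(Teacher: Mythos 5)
Your proposed strategy has the logical dependencies backwards, which creates a real circularity. In the paper, Lemma~\ref{lem:identities-opt} is a direct consequence of the hypotheses of Theorem~\ref{thm:cert} alone — it is proved \emph{before} (and then used \emph{inside}) the proof of Theorem~\ref{thm:cert}, via Lemma~\ref{lem:explicit}. So invoking Theorem~\ref{thm:cert} to propagate the formulas from level $n$ to level $2n$ is, as written, a circular argument. You could in principle repair this by restructuring everything as a single simultaneous induction in which Theorem~\ref{thm:cert} and Lemma~\ref{lem:identities-opt} are proved together level by level, invoking only the explicit recurrence $\lam=\Theta+\Xi+\Delta$ of Definition~\ref{def:cert} rather than the theorem's conclusion — but that is a substantial reorganization of the argument, not what you wrote.

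The paper's actual proof is much more direct and requires no recursion at all. It assumes only that $\sig$ satisfies the rate identity~\eqref{eq:cert:n} and $*$-sparsity, expands both sides of~\eqref{eq:cert:n} as quadratic forms in $(x_0,g_0,\dots,g_{n-1})$ after substituting $x_t = x_0 - \sum_{s<t}\alpha_s g_s$, and matches the coefficients of $\|x_0\|^2$ and $\langle x_0, g_t\rangle$. Only the $Q_{t,*}$ and $Q_{*,t}$ co-coercivities contribute to these terms, so one obtains an essentially triangular system in the $\sig_{n-1,*}$ and $\sig_{*,j}$ which can be back-solved by hand (using the netflow constraint and the known Silver Stepsizes $\alpha_{n-1}=b_n$, $\alpha_{n-2}=a_2$, and the telescoping recurrence $\sig_{*,j}=q_j\sig_{*,j+1}$). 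This is precisely a scaled-up version of your $n=1$ base case — you already have the right idea there, but you did not notice that the same coefficient-matching works verbatim at every horizon $n$, which removes the need for any inductive step or appeal to the $2n$-step construction.

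Two further consequences of the structural choice. First, the lemma as stated holds for \emph{any} $\sig$ satisfying the hypotheses of Theorem~\ref{thm:cert}, whereas your induction would only establish it for the one specific sequence of certificates generated by the recursion. Second, the telescoping identity you single out (equivalent, after using $q^{(2n)}_{t+n}=q^{(2n)}_t$ for $t\leq n-3$, to $\prod_{t=0}^{n-1} q^{(2n)}_t = (1-z_{2n})/(1-z_n)$) is exactly the second bullet of the paper's Lemma~\ref{lem:identities-qi}; but the paper proves that identity \emph{using} Lemma~\ref{lem:identities-opt} (via the netflow equation and the explicit values of $\sig_{n-1,*}$ and $\sig_{*,j}$). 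So your plan also reverses the dependency between these two lemmas and would require an independent proof of Lemma~\ref{lem:identities-qi} that you only sketch. The direct coefficient-matching argument avoids all of this.
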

\begin{proof}
	That $\sig_{j,*} = 0$ for all $j < n-1$ is trivially due to the assumption that $\sig$ satisfies the $*$-sparsity property (Definition~\ref{def:*sparsity}). The content of the lemma is solving for the other multipliers. To this end, recall that the $n$-step certificate~\eqref{eq:cert:n} establishes that the two quadratic forms $\tau_n \|x_0\|^2 - \|x_n\|^2$ and $\sum_{i\neq j \in \{0, \dots, n-1, *\}} \sig_{ij} Q_{ij}$ are equal modulo the ideal generated by the equations $x_{t+1} = x_t - \alpha_t g_t$ for all $t \in \{0, \dots, n-1\}$. Thus if we expand both these quadratic forms by replacing, for every $t > 0$, the iterate $x_t$ with $x_0 - \sum_{s=0}^{t-1} \alpha_s g_s$, then the resulting two quadratic forms (now in the variables $\{x_0, g_0, \dots, g_{n-1}\}$) are equal, and in particular the coefficient of any term must match. 
	\par We prove this lemma by solving the equations that come from matching the coefficients for the terms $\|x_0\|^2$ and $\langle x_0, g_i \rangle$ for $i \in \{0, \dots, n-1\}$. By expanding the definition of the co-coercivity, it is evident that only the co-coercivities of the form $Q_{t,*}$ and $Q_{*,t}$ contributes coefficients for these terms. In particular, matching the coefficients for the term $\|x_0\|^2$ gives the equation
	\begin{align}
		\tau_n - 1 = -\frac{1}{\kappa} \left(  \sig_{n-1,*} + \sum_{j=0}^{n-1} \sig_{*,j} \right)\,,
		\label{eq:pf-opt:x0}
	\end{align}
	matching the coefficients for the term $\langle x_0, g_{n-1} \rangle$ gives the equation
	\begin{align}
		\alpha_{n-1}
		=
		\frac{1}{\kappa} \sig_{n-1,*} + \sig_{*,n-1}\,,
		\label{eq:pf-opt:x0gn-1}
	\end{align}
	and matching the coefficients for the other terms $\langle x_0, g_t \rangle$ gives the equations
	\begin{align}
		\alpha_t
		=
		\frac{\alpha_t}{\kap} \left( \sig_{n-1,*} + \sum_{i=t+1}^{n-1} \sig_{*,i} \right) + \sig_{*,t}
		\,, \qquad \forall t \in \{0, \dots, n-2\}
		\label{eq:pf-opt:x0gt}
	\end{align}
	Intuitively, these equations can be back-solved since they are (essentially) already in triangular form. Below we detail a simple way to do this by hand.
	\par First, we obtain the claimed expression for $\sig_{n-1,*}$ by combining the netflow equation $\sig_{n-1,*} = \sum_{j=0}^{n-1} \sig_{*,j}$ with~\eqref{eq:pf-opt:x0}, re-arranging, and plugging in the definition of $\tau_n = (\frac{1-z_n}{1+z_n})^2$.
	\par Next, we obtain the claimed expression for $\sig_{*,n-1}$ by plugging the now-proved value of $\sig_{n-1,*}$ into~\eqref{eq:pf-opt:x0gn-1} and using the fact that the Silver Stepsize $\alpha_{n-1} = b_n = (1 + \kap z_n) / (1+z_n)$, see \S\ref{sec:construction}.
	\par Next, we obtain the claimed expression for $\sig_{*,n-2}$ by plugging the now-proved values for $\sig_{*,n-1}$ and $\sig_{n-1,*}$ into~\eqref{eq:pf-opt:x0gt}, for $t=n-2$, and using the fact that Silver Stepsize $\alpha_{n-2} = \kappa/(\kappa -1)$, see \S\ref{sec:construction}.
	\par To solve for the remaining variables $\{\sig_{*,j}\}_{j \in \{0, \dots, n-3\}}$, we could continue back-solving by plugging into~\eqref{eq:pf-opt:x0gt}. However, there is a simpler approach: by subtracting the $j$-th equation~\eqref{eq:pf-opt:x0gt} from the $(j+1)$-th equation~\eqref{eq:pf-opt:x0gt}, the partial sums telescope. After re-arranging, this gives the recurrence
	\begin{align}
		\sig_{*,j} = q_{j} \sig_{*,j+1}\,, \qquad \forall j \in \{0, \dots, n-3\}\,.
	\end{align}
	By plugging in the now-proved value of $\sig_{*,n-2}$ as the base case for this backwards recurrence, we obtain the claimed expression for $\sig_{*,j}$ for all $j \in \{0, \dots, n-3\}$. 
\end{proof}

\subsection{Helper lemma: identities involving $q_i$}\label{app:helper-qi}

Here we provide useful identities involving $q_i$. This enables expressing $r = \prod_{t=0}^{n-1} 1/q_t$ and sums of the form $\sum_{j=n}^{2n-2} \prod_{t=n}^{j-1} 1/q_t$ in terms of the Normalized Silver Stepsizes $z_n$ and $z_{2n}$. We will use this to prove the final two items of Lemma~\ref{lem:explicit} in Appendix~\ref{app:explicit}. Note that for simplicity, we state these identities for $n \geq 4$ since the low-rank component $\Xi$ (which is what this lemma is used to compute) is identically zero for small $n$.

\begin{lemma}[Identities involving products of $q_i$]\label{lem:identities-qi}
    For $n \geq 4$:
	\begin{itemize}
		\item $	\prod_{t=0}^{n-3} \frac{1}{q_t} = \frac{\kappa - 2}{\kappa (1 - z_n)}$
		\item $\prod_{t=0}^{n-1} \frac{1}{q_t}
		= \frac{1-z_n}{1-z_{2n}} = r$ 
		\item $\sum_{j=n}^{2n-2} \prod_{t=n}^{j-1} \frac{1}{q_t} =
		\frac{(\kappa - 2)(\kappa z_n - 1)}{\kappa(1-z_n)} 
		$
	\end{itemize}
\end{lemma}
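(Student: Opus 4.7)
The plan is to derive all three identities by a telescoping argument powered by one key algebraic identity and the recursive structure of the Silver Stepsize Schedule. The fundamental algebraic identity I will use is
\[
(1-z_n)(1+y_n) = (1-z_{n/2})^2,
\]
which follows directly from the defining relations~\eqref{eq:yz-defining} by expanding $(1-z_n)(1+y_n) = 1 - (z_n-y_n) - y_n z_n = 1 - 2z_{n/2}(1-z_{n/2}) - z_{n/2}^2 = (1-z_{n/2})^2$. The structural fact I will use is that the recursion $h^{(2n)} = [\tilde{h}^{(n)}, a_{2n}, \tilde{h}^{(n)}, b_{2n}]$ implies $\alpha_{n+t} = \alpha_t$ for $t=0,\ldots,n-2$ and hence $q_{n+t}=q_t$ for $t=0,\ldots,n-3$; and that iterating the recursion shows $\alpha_0 = \alpha_{n-2} = a_2$ for all $n \geq 4$, and $\alpha_n = a_2$.

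The main computational step is the telescoping. Writing $1/q_t = \kappa\alpha_{t+1}/[\alpha_t(\kappa-\alpha_{t+1})]$ and using the reparameterization $\alpha_t = \psi(u_t)$ together with the easily checked identity $\kappa - \psi(u) = (\kappa-1)/(1+u)$, the $\alpha_t$'s telescope and give
\[
\prod_{t=0}^{N-1} \frac{1}{q_t} = \left(\frac{\kappa}{\kappa-1}\right)^{N} \cdot \frac{\alpha_N}{\alpha_0} \cdot \prod_{s=1}^{N}(1+u_s).
\]
I will then evaluate $U_n := \prod_{s=0}^{n-2}(1+u_s)$ for the entries of $\tilde{h}^{(n)}$ by induction on $n$. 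The recursive structure $\tilde{h}^{(2m)} = [\tilde{h}^{(m)}, a_{2m}, \tilde{h}^{(m)}]$ gives the recursion $U_{2m} = U_m^2\,(1+y_{2m})$, and substituting $(1+y_{2m}) = (1-z_m)^2/(1-z_{2m})$ via the fundamental identity shows that $W_m := U_m(1-z_m)$ satisfies the clean recursion $W_{2m} = W_m^2$. With base case $W_2 = (1+y_2)(1-z_2) = (1-z_1)^2 = ((\kappa-1)/\kappa)^2$, induction yields $W_n = ((\kappa-1)/\kappa)^n$, hence $U_n = ((\kappa-1)/\kappa)^n/(1-z_n)$.

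With these two ingredients, the three identities fall out as follows. For the first identity, specializing the telescoped product to $N = n-2$ and using $\alpha_0 = \alpha_{n-2}$ (so the $\alpha_N/\alpha_0$ factor drops out) together with the closed form for $U_n/(1+y_2)$ and one more application of the fundamental identity at scale $n=2$ gives the claimed expression. For the second identity, extend the product by the two remaining factors $1/q_{n-2}$ and $1/q_{n-1}$, which involve the three stepsizes $a_2, a_{2n}, a_2$; the product $(1-a_{2n}/\kappa)(1-a_2/\kappa)$ collapses via $\kappa-\psi=(\kappa-1)/(1+u)$, and then $(1+y_{2n})(1-z_{2n}) = (1-z_n)^2$ produces the claimed telescopic cancellation down to $(1-z_n)/(1-z_{2n})$. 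For the third identity (the sum), use symmetry to rewrite $\sum_{j=n}^{2n-2}\prod_{t=n}^{j-1} 1/q_t = \sum_{k=0}^{n-2} S_k$ with $S_k := \prod_{t=0}^{k-1}1/q_t$, then invoke the recurrence $\sig_{*,j} = q_j\sig_{*,j+1}$ from the proof of Lemma~\ref{lem:identities-opt} to identify $S_k = \sig_{*,k}/\sig_{*,0}$; the netflow identity $\sum_{k=0}^{n-1}\sig_{*,k} = \sig_{n-1,*}$ then collapses the sum to $(\sig_{n-1,*}-\sig_{*,n-1})/\sig_{*,0}$, which simplifies using the first identity (to express $\sig_{*,0}$) and the explicit closed forms for $\sig_{n-1,*},\sig_{*,n-1}$.

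The main obstacle I expect is bookkeeping: making sure the telescoping of the $\alpha$'s, the application of the fundamental identity at the right scales, and the handling of the boundary terms at $t = n-2, n-1$ in the second identity are all executed consistently. The content, however, is essentially captured by the single recursion $W_{2m} = W_m^2$ and the identity $(1-z_n)(1+y_n) = (1-z_{n/2})^2$; once these are in hand, the three identities are routine algebraic consequences.
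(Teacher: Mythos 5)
Your proposal is correct and uses the same top-level telescoping strategy as the paper, but the way you obtain the central closed form is genuinely different and worth contrasting. The paper derives $\prod_{t=0}^{n-2}(1 - \alpha_t/\kappa) = \kappa(1-z_n)/(\kappa-1)$ by combining the netflow constraint of the certificate $\sigma$ (through Lemma~\ref{lem:identities-opt}) with the disjunction identity (Lemma~\ref{lem:disjunction}), i.e., it leans on the certificate machinery and on the combinatorial identity $\sum_t c_t\prod_{i>t}(1-c_i) = 1-\prod_t(1-c_t)$. You instead observe that the palindrome structure $\tilde{h}^{(2m)}=[\tilde{h}^{(m)},a_{2m},\tilde{h}^{(m)}]$ together with the defining relation $(1-z_{2m})(1+y_{2m})=(1-z_m)^2$ yields the one-line multiplicative recursion $W_{2m}=W_m^2$ for $W_m := (1-z_m)\prod_{s=0}^{m-2}(1+u_s)$, which gives a certificate-free, purely structural proof of the same product (and makes its closed form transparent). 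This is more illuminating about where the recursive structure of the schedule enters, and it avoids Lemma~\ref{lem:disjunction} entirely for the first two identities; the trade-off is that the paper's route is locally cheaper since the certificate and Lemma~\ref{lem:identities-opt} must be developed anyway. For the sum identity, your route (writing $S_k = \sigma_{*,k}/\sigma_{*,0}$ and summing via netflow) and the paper's (obtaining $\sum \prod q = \kappa z_n - 1$ from netflow and dividing by the first identity) are essentially the same calculation packaged differently. Two small bookkeeping points to make explicit when you write this up: (i) the claim $q_{n+t}=q_t$ must be restricted to $t\in\{0,\dots,n-3\}$ (as you note), which is exactly enough for the index shift in the third identity since $k-1 \le n-3$; and (ii) like the paper, your argument at several points invokes $\alpha_0 = \alpha_{n-2}=\alpha_n = a_2$ and the value of $1+y_2$, so those boundary facts need to be cited alongside the main recursion.
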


The proof exploits the following elementary identity. We remark that this identity has a probabilistic interpretation, although we do not make explicit use of it. This interpretation is for the case $c_t \in [0,1]$; then $c_t$ can be viewed as the probability that the $t$-th coin is heads, hence $\sum_{t=0}^T c_t \prod_{i=t+1}^T ( 1 - c_i) = 1 - \prod_{t=0}^T (1 - c_t) $ are two expressions for the probability that at least one coin is heads. Below, recall the standard convention that the product over an empty set is $1$.

\begin{lemma}\label{lem:disjunction}
	For any non-negative integer $T$ and any real numbers $c_0, \dots, c_T$,
	\begin{align*}
	 \sum_{t=0}^T c_t \prod_{i=t+1}^T ( 1 - c_i) = 1 - \prod_{t=0}^T (1 - c_t)\,.
	\end{align*}
\end{lemma}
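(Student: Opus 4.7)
The plan is to prove this identity by telescoping. The key algebraic observation is that, for each $t$, the trivial decomposition $c_t = 1 - (1-c_t)$ gives
\[
c_t \prod_{i=t+1}^T (1-c_i) \;=\; \prod_{i=t+1}^T (1-c_i) \;-\; \prod_{i=t}^T (1-c_i).
\]
Summing this over $t = 0, 1, \dots, T$, the intermediate terms cancel in pairs, leaving only $\prod_{i=T+1}^T (1-c_i) - \prod_{i=0}^T (1-c_i)$. By the empty-product convention, the first term is $1$ and the second is $\prod_{t=0}^T (1-c_t)$, which is precisely the right-hand side.

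An essentially equivalent route is induction on $T$. The base case $T=0$ is immediate since both sides equal $c_0$. For the inductive step, I would peel off the largest-index summand $c_{T+1}$ from the left-hand side and rewrite the remaining terms as $(1 - c_{T+1}) \sum_{t=0}^T c_t \prod_{i=t+1}^T (1-c_i)$; the inductive hypothesis then turns this into $c_{T+1} + (1-c_{T+1})\bigl(1 - \prod_{t=0}^T (1-c_t)\bigr)$, which simplifies to $1 - \prod_{t=0}^{T+1}(1-c_t)$.

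I do not anticipate any genuine obstacle: this is a purely formal identity over an arbitrary commutative ring, and either route reduces to a single algebraic manipulation. The only point requiring mild care is the boundary handling under the empty-product convention at $t = T$ (where the factor $\prod_{i=T+1}^T(1-c_i)$ must be read as $1$, so the summand contributes just $c_T$); this is exactly what makes the telescoping terminate cleanly.
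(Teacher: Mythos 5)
Your proposal is correct. Your second route (induction on $T$, peeling off the top summand and using $c_{T+1} + (1-c_{T+1})(1 - \prod_{t=0}^T(1-c_t)) = 1 - \prod_{t=0}^{T+1}(1-c_t)$) is exactly the paper's proof. Your primary telescoping route---writing $c_t\prod_{i=t+1}^T(1-c_i) = \prod_{i=t+1}^T(1-c_i) - \prod_{i=t}^T(1-c_i)$ and summing---is a genuinely different presentation: it replaces the inductive bookkeeping with a single closed-form decomposition whose sum collapses immediately. The telescoping version is arguably cleaner in that it makes the cancellation mechanism visible in one line and avoids invoking an inductive hypothesis, while the paper's induction is more mechanical but requires no clever rewriting of the summand. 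Both are valid over any commutative ring, and you correctly flag the one subtlety (the empty product at $t=T$) that makes the boundary work out.
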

\begin{proof}
	We prove by induction. The base case $T=0$ is trivial. For the inductive step, assume true for $T$; then the claim holds for $T+1$ because 
	$\sum_{t=0}^{T+1} c_t \prod_{i=t+1}^{T+1} ( 1 - c_i)
	=
	c_{T+1} + (1 - c_{T+1}) ( \sum_{t=0}^T c_t \prod_{i=t+1}^T (1 - c_i))
	=
	c_{T+1} + (1 - c_{T+1}) ( 1 - \prod_{t=0}^T (1 - c_t))
	= 1 - \prod_{t=0}^{T+1} (1 - c_t)$.
\end{proof}

\begin{proof}[Proof of Lemma~\ref{lem:identities-qi}]
	Since $\sig$ is a valid certificate, it satisfies the netflow constraint $\sum_j \sig_{*,j} = \sum_j \sig_{j,*}$. By using the values for these entries (Lemma~\ref{lem:identities-opt}) and re-arranging, we obtain
	\begin{align}
		\sum_{j=0}^{n-2} \prod_{t=j}^{n-3} q_j = \kappa z_n - 1\,.
		\label{eq:pf-lem-qi-1}
	\end{align}
	Next, we compute this quantity in a different way. By definition of $q_i = \frac{\alpha_i (1 - \alpha_{i+1}/\kappa)}{\alpha_{i+1}}$,
	multiplying consecutive $q_i$ yields a telescoping product, namely
	\begin{align}
		\prod_{t=j}^{n-3}  q_t = \frac{\alpha_j}{\alpha_{n-2}} \prod_{t=j+1}^{n-2} \left( 1 - \frac{\alpha_t}{\kappa} \right)\,.
		\label{eq:pf-lem-qi-2}
	\end{align}
	In particular, this implies that
	\begin{align}
		\sum_{j=0}^{n-2} \prod_{t=j}^{n-3} q_j 
		= 
		\frac{\kappa}{\alpha_{n-2}} \,\Bigg( \sum_{j=0}^{n-2} \frac{\alpha_j}{\kappa} \prod_{t=j+1}^{n-2} \left(1 - \frac{\alpha_t}{\kappa} \right) \Bigg)
		=
		(\kappa - 1)\, \Bigg( 1 - \prod_{t=0}^{n-2} \left(1 - \frac{\alpha_t}{\kappa} \right) \Bigg)\,.
		\label{eq:pf-lem-qi-3}
	\end{align}
	Above, the second step uses Lemma~\ref{lem:disjunction} and the fact that $\alpha_{n-2} = a_2 = \kappa/(\kappa - 1)$ by construction of the Silver Stepsize Schedule (see \S\ref{sec:construction}). Now by combining~\eqref{eq:pf-lem-qi-1} and~\eqref{eq:pf-lem-qi-3}, we obtain
	\begin{align*}
		\prod_{t=0}^{n-2} \left( 1 - \frac{\alpha_t}{\kap} \right) = \frac{\kappa\,(1-z_n)}{\kappa - 1}\,.
	\end{align*}
	By using again the telescoping property~\eqref{eq:pf-lem-qi-2} and the fact that $\alpha_0 = \alpha_{n-2}$ by construction of the Silver Stepsize Schedule (see \S\ref{sec:construction}), we conclude that
	\begin{align}
		\prod_{t=0}^{n-3} q_t
		=
		\frac{\alpha_0}{\alpha_{n-2}} \prod_{t=1}^{n-2} \left( 1 - \frac{\alpha_t}{\kap} \right)
		=
		\frac{ \prod_{t=0}^{n-2} (1 - \alpha_t/\kap)}{1 - \alpha_0/\kap}
		=
		\frac{\kappa\,(1 - z_n)}{\kappa - 2}\,.
		\label{eq:pf-lem-qi-4}
	\end{align}
	This proves the first claim. 
	\par For the second claim, observe that
	\begin{align*}
		q_{n-1} q_{n-2} 
		= \left( 1 - \frac{\alpha_n}{\kap} \right) \left( 1 - \frac{\alpha_{n-1}}{\kap} \right) 
		= \frac{\kap - 2}{\kap - 1} \cdot \frac{\kap-1}{\kap(1+y_{2n})}
		= \frac{\kap - 2}{\kappa (1 + y_{2n})}\,,
	\end{align*}
	where above we have simplified by using the facts that $\alpha_{n-2} = \alpha_n = a_2 = \kappa/(\kappa - 1)$ and $\alpha_{n-1} = a_{2n}$ by construction of the Silver Stepsize Schedule (see \S\ref{sec:construction}), as well as the re-parameterization of $y_{2n}$ in terms of $a_{2n} = \alpha_{n-1}$. Multiplying the above two displays yields
	\begin{align*}
		\prod_{t=0}^{n-1} q_t = \frac{1-z_n}{1+y_{2n}}\,.
	\end{align*}
	The proof of the second claim is then complete by using the identity $(1 - z_n)^2 = (1-z_{2n})(1+y_{2n})$ which follows from the recurrence construction of the $y_n,z_n$ sequences in \S\ref{sec:construction}.
	\par Finally, for the third claim, divide~\eqref{eq:pf-lem-qi-1} by~\eqref{eq:pf-lem-qi-4} to conclude the desired identity
	\begin{align*}
		\sum_{j=0}^{n-2} \prod_{t=0}^{j-1} \frac{1}{q_t}
		=
		\frac{\sum_{j=0}^{n-2} \prod_{t=j}^{n-3} q_j}{\prod_{t=0}^{n-3} q_t} 
		=
		\frac{(\kappa - 2)(\kappa z_n - 1)}{\kappa (1 - z_n)}\,.
	\end{align*}
\end{proof}

\subsection{Recursive gluing as a rational function of $z_n, y_{2n}, z_{2n}$}\label{app:explicit}

\begin{proof}[Proof of Lemma~\ref{lem:explicit}]
	The expressions for $\Delta$ are immediate from Definition~\ref{def:cert} and Lemma~\ref{lem:identities-opt}. The expressions for the row sums of $\Xi$ are immediate by definition of $\Xi$ and the final identity in Lemma~\ref{lem:identities-qi}. The expression for $r$ follows from Lemma~\ref{lem:identities-qi}.
\end{proof}

\subsection{Recursive gluing as a succinct quadratic form}\label{app:succinct}

Here we provide details for Lemma~\ref{lem:succinct} and its proof. The definitions of the coefficient matrices $E$, $S$, and $L$ are as follows. Note that these matrices are symmetric, thus for shorthand we simply write a tilde for their lower-triangular elements. 
\begin{align}
	\small
	E := \begin{bmatrix}
		\frac{\tau_{2n}}{\tau_n} - c\tau_n & c \tau_n a_{2n}  - \frac{\tau_{2n}}{\tau_n} b_n & 0 & 0 \\
		\tilde & \frac{\tau_{2n}}{\tau_n} b_n^2  - c\tau_n a_{2n}^2 & 0 & 0 \\
		\tilde & \tilde & c-1 & b_{2n} - c b_n  \\
		\tilde & \tilde & \tilde & c b_n^2 - b_{2n}^2 \\				
	\end{bmatrix}
\nonumber
\end{align}
and
\begin{align}
	\small
	S := \begin{bmatrix}
		-\frac{1}{\kappa}\sum\limits_{t \neq n-1} (\Delta_{n-1,t} + \Delta_{t,n-1}) 
		& \sum\limits_{t \neq n-1} (\frac{\Delta_{n-1,t}}{\kappa} + \Delta_{t,n-1}) 
		& \frac{1}{\kappa} (\Delta_{n-1,2n-1} + \Delta_{2n-1,n-1}) 
		& -\Delta_{n-1,2n-1}- \frac{\Delta_{2n-1,n-1}}{\kappa}  
		\\				
		\tilde 
		& - \sum\limits_{t \neq n-1} (\Delta_{n-1,t} + \Delta_{t,n-1}) 
		& - \Delta_{2n-1,n-1} - \frac{\Delta_{n-1,2n-1}}{\kappa}
		& \Delta_{n-1,2n-1} + \Delta_{2n-1,n-1}
		\\				
		\tilde 
		& \tilde 
		& -\frac{1}{\kappa} \sum\limits_{t \neq 2n-1} ( \Delta_{2n-1,t} + \Delta_{t,2n-1}) 
		& \sum\limits_{t \neq 2n-1} (\frac{\Delta_{2n-1,t}}{\kappa} + \Delta_{t,2n-1}) 
		\\				
		\tilde 
		& \tilde 
		& \tilde 
		& - \sum\limits_{t \neq 2n-1} (\Delta_{2n-1,t} + \Delta_{t,2n-1}) 			
	\end{bmatrix}
\nonumber
\end{align}
and $L :=  \phi \left( L^{(n-1)} + r L^{(2n-1)} \right)$, where
	\begin{align}
		\small
	L^{(n-1)} &:= 
	\frac{\kap-2}{\kap (1 - z_n)}
	\begin{bmatrix}
		z_n - 2 + \tfrac{1}{\kappa} & a_{2n} (1 - z_n) + \tfrac{\kappa - 1}{\kappa} & 1 - \tfrac{1}{\kappa} & 0 \\
		\tilde &  2a_{2n} (z_n - 1) - (\kappa z_n - 1) &  -1 + \tfrac{1}{\kappa}  & 0 \\    		
		\tilde & \tilde & 0 & 0 \\
		\tilde & \tilde & \tilde & 0 \\
	\end{bmatrix}
	\nonumber 
	\\
	L^{(2n-1)} &:= 
	\frac{\kap-2}{\kap (1 - z_n)}
	\begin{bmatrix}
		0
		&
		0
		&
		-1+z_n
		&
		1-z_n
		\\
		\tilde
		&
		0
		&
		a_{2n} (1-z_n)
		&
		-a_{2n} (1-z_n)
		\\
		\tilde
		&
		\tilde
		&
		2 - z_n - \tfrac{1}{\kappa}
		&
		-1+z_n
		\\
		\tilde
		&
		\tilde
		&
		\tilde
		&
		1 - \kap z_n
	\end{bmatrix}
	\nonumber
\end{align}
and is zero for $n=1,2$.

\begin{proof}[Proof of remaining parts of Lemma~\ref{lem:succinct}]
		The identity for the sparse correction is immediate by plugging in the definition of $P_{ij}$, simplifying $x^* = g^* = 0$, and collecting terms. It remains to show the identity for the low-rank correction. Suppose $n \geq 4$, else $\Xi$ is identically zero and the claim is trivial. We claim that
		\begin{align}
			\sum_{j=n}^{2n-2} \prod_{t=n}^{j-1} \frac{1}{q_j} \left( P_{n-1,j} - P_{*,j} \right) 
			&= \langle L^{(n-1)}, vv^T \rangle 
			\label{eq:lem-lr:n-1}
			\\ 
			\sum_{j=n}^{2n-2} \prod_{t=n}^{j-1} \frac{1}{q_j} \left( P_{2n-1,j} - P_{*,j} \right) 
			&= \langle L^{(2n-1)}, vv^T \rangle
			\,.
			\label{eq:lem-lr:2n-1}
		\end{align}
		These identities suffice because by the definition of $\Xi$ and $L$, we then have
		\[
			\sum_{ij} \Xi_{ij}P_{ij}
			=
			\phi \sum_{j=n}^{2n-2} \prod_{t=n}^{j-1} \frac{1}{q_j} \left( \left( P_{n-1,j} - P_{*,j} \right) - r \left( P_{2n-1,j} - P_{*,j} \right) \right)
			=
			\langle L, vv^T \rangle\,.
		\]

	\par We prove~\eqref{eq:lem-lr:n-1}; the proof of~\eqref{eq:lem-lr:2n-1} is entirely analogous and thus omitted for brevity. By expanding the squares in the definition of $P_{ij}$, simplifying $x^* = g^* = 0$, and collecting terms, 
	\begin{align*}
		P_{n-1,j} - P_{*,j}
		=
		- \frac{1}{\kappa} \|x_{n-1}\|^2 - \|g_{n-1}\|^2 + \frac{2}{\kappa} \langle x_{n-1}, g_{n-1} \rangle + 2 \langle x_{n-1} - g_{n-1} , \frac{x_j}{\kappa} - g_j\rangle\,.
	\end{align*}
	By expanding $x_{j} = x_{n-1} - \sum_{i=n-1}^{j-1} \alpha_i g_i$ using the definition of GD, and then collecting terms,
	\begin{align*}
		P_{n-1,j} - P_{*,j}
		=
		\frac{1}{\kappa} \|x_{n-1}\|^2 + \left( \frac{2\alpha_{n-1}}{\kappa}  - 1\right) \|g_{n-1}\|^2 - \frac{2\alpha_{n-1}}{\kappa} \langle x_{n-1}, g_{n-1} \rangle
		- 2 \langle x_{n-1} - g_{n-1} , g_j + \frac{1}{\kappa} \sum_{i=n}^{j-1} \alpha_i g_i\rangle \,.
	\end{align*}
	Since the first three of these four summands are independent of $j$, we conclude
	\begin{align}
		\sum_{j=n}^{2n-2} \prod_{t=n}^{j-1} \frac{1}{q_j} \left( P_{n-1,j} - P_{*,j} \right) 
		&=
		\left( \sum_{j=n}^{2n-2} \prod_{t=n}^{j-1} \frac{1}{q_j} \right)  \Bigg( \frac{1}{\kappa} \|x_{n-1}\|^2 + \left( \frac{2\alpha_{n-1}}{\kappa}  - 1\right) \|g_{n-1}\|^2 - \frac{2\alpha_{n-1}}{\kappa} \langle x_{n-1}, g_{n-1} \rangle \Bigg) \nonumber
		\\ &- 2 \left\langle x_{n-1} - g_{n-1} , \sum_{j=n}^{2n-2} \prod_{t=n}^{j-1} \frac{1}{q_j} \Big(  g_j + \frac{1} {\kappa} \sum_{i=n}^{j-1} \alpha_i g_i \Big) \right\rangle \,.
		\label{eq:pf-lr:1}
	\end{align}
	For the first term, use Lemma~\ref{lem:identities-qi} and the fact that $q_j = q_{j-n}$ for $j \in \{n, \dots, 2n-2\}$ to obtain
	\begin{align}
		\sum_{j=n}^{2n-2} \prod_{t=n}^{j-1} \frac{1}{q_j} 
		=
		\sum_{j=0}^{n-2} \prod_{t=0}^{j-1} \frac{1}{q_j} 
		=
		\frac{(\kappa - 2)(\kappa z_n-1)}{\kappa (1 - z_n)}\,.
		\label{eq:pf-lr:2}
	\end{align}
	For the second term, observe that
	\begin{align}
		\sum_{j=n}^{2n-2} \prod_{t=n}^{j-1} \frac{1}{q_j} \left(  g_j + \frac{1} {\kappa} \sum_{i=n}^{j-1} \alpha_i g_i \right)
		&= 
		\frac{1}{\alpha_n} \sum_{j=n}^{2n-2} \alpha_j \prod_{t=n+1}^{j} \frac{1}{(1 - \alpha_t/\kappa)} \left(  g_j + \frac{1}{\kappa} \sum_{i=n}^{j-1} \alpha_i g_i \right)
		\nonumber \\ &= 
		\frac{1}{\alpha_n} \sum_{i=n}^{2n-2} \alpha_i g_i \, \Bigg[
		\frac{1}{\prod_{t=n+1}^i (1 - \alpha_t/\kappa)}
		+
		\sum_{j=i+1}^{2n-2} \frac{\alpha_j}{\kappa \prod_{t=n+1}^j (1 - \alpha_t/\kappa)}
		\Bigg]
		\nonumber \\ &= 
		\frac{1}{\alpha_n \prod_{t=n+1}^{2n-2}(1 - \alpha_t/\kappa)} \sum_{i=n}^{2n-2} \alpha_i g_i \, \Bigg[
		\prod_{t=i+1}^{2n-2}(1 - \alpha_t/\kappa) + \sum_{j=i+1}^{2n-2} \frac{\alpha_j}{\kappa} \prod_{t=j+1}^{2n-2} (1 - \alpha_t/\kappa)
		\Bigg]
		\nonumber \\ &= \frac{1}{\alpha_n \prod_{t=n+1}^{2n-2}(1 - \alpha_t/\kappa)} \sum_{i=n}^{2n-2} \alpha_i g_i 
		\nonumber \\ &= \frac{1}{\alpha_n \prod_{t=n+1}^{2n-2}(1 - \alpha_t/\kappa)} \Bigg[ x_{n-1} - x_{2n-1} - \alpha_{n-1} g_{n-1} \Bigg]
		\nonumber \\&=
		\frac{(\kappa-1)(\kappa-2)}{\kappa^2(1 - z_n)} \Bigg[ x_{n-1} - x_{2n-1} - \alpha_{n-1} g_{n-1} \Bigg]\,.\label{eq:pf-lr:3} 
	\end{align}
	Above, the first step is by definition of $q_j$ and telescoping. The second step is by re-arranging sums. The third step is by factoring out the product. The fourth step is because $\sum_{j=i+1}^{2n-2} \tfrac{\alpha_j}{\kappa} \prod_{t=j+1}^{2n-2} (1 - \tfrac{\alpha_t}{\kappa}) = 1 - \prod_{t=i+1}^{2n-2} ( 1 - \tfrac{\alpha_t}{\kappa})$ by Lemma~\ref{lem:disjunction}. The fifth step is by definition of GD. The final step uses $\alpha_n \prod_{t=n+1}^{2n-2}(1 - \tfrac{\alpha_t}{\kappa}) = \alpha_{2n-2} \prod_{t=n}^{2n-3} q_t$, Lemma~\ref{lem:identities-qi}, and the fact that $\alpha_{2n-2} = a_2 =  \kappa/(\kappa-1)$.
	\par By combining~\eqref{eq:pf-lr:1},~\eqref{eq:pf-lr:2}, and~\eqref{eq:pf-lr:3}, we conclude that
	 the desired quantity is equal to
	\begin{align*}
		\sum_{j=n}^{2n-2} \prod_{t=n}^{j-1} \frac{1}{q_j} \left( P_{n-1,j} - P_{*,j} \right) 
		= 
		\frac{\kappa - 2}{\kappa^2(1-z_n)} \Bigg[
		&(\kappa z_n - 1) \Big( \|x_{n-1}\|^2 - 2\alpha_{n-1} \langle x_{n-1}, g_{n-1} \rangle + (2\alpha_{n-1} - \kappa) \|g_{n-1}\|^2 \Big)
		\nonumber \\&- 2(\kappa-1) \langle x_{n-1} - g_{n-1}, x_{n-1} - x_{2n-1} - \alpha_{n-1} g_{n-1} \rangle
		\Bigg]\,.
	\end{align*}
	Expanding the inner product and canceling terms completes the proof of~\eqref{eq:lem-lr:n-1}.
\end{proof}

	\footnotesize
	\addcontentsline{toc}{section}{References}
	\bibliographystyle{plainnat}
	\bibliography{hedging}{}

\begin{thebibliography}{70}
\providecommand{\natexlab}[1]{#1}
\providecommand{\url}[1]{\texttt{#1}}
\expandafter\ifx\csname urlstyle\endcsname\relax
  \providecommand{\doi}[1]{doi: #1}\else
  \providecommand{\doi}{doi: \begingroup \urlstyle{rm}\Url}\fi

\bibitem[Mat()]{MathematicaURL}
Acceleration by stepsize hedging -- verification of identities computer algebra script.
\newblock \url{https://jasonaltschuler.github.io/AccelerationByStepsizeHedging}.

\bibitem[Agarwal et~al.(2021)Agarwal, Goel, and Zhang]{AgarwalGoelZhang}
Naman Agarwal, Surbhi Goel, and Cyril Zhang.
\newblock Acceleration via fractal learning rate schedules.
\newblock In \emph{International Conference on Machine Learning}, pages 87--99. PMLR, 2021.

\bibitem[Allen-Zhu and Hazan(2016)]{allen2016optimal}
Zeyuan Allen-Zhu and Elad Hazan.
\newblock Optimal black-box reductions between optimization objectives.
\newblock \emph{Advances in Neural Information Processing Systems}, 29, 2016.

\bibitem[Allen-Zhu and Orecchia(2014)]{linear-coupling}
Zeyuan Allen-Zhu and Lorenzo Orecchia.
\newblock Linear coupling: An ultimate unification of gradient and mirror descent.
\newblock \emph{arXiv preprint arXiv:1407.1537}, 2014.

\bibitem[Altschuler(2018)]{altschuler2018greed}
Jason~M. Altschuler.
\newblock Greed, hedging, and acceleration in convex optimization.
\newblock Master's thesis, Massachusetts Institute of Technology, 2018.

\bibitem[Barr{\'e} et~al.(2023)Barr{\'e}, Taylor, and Bach]{barre2023principled}
Mathieu Barr{\'e}, Adrien~B Taylor, and Francis Bach.
\newblock Principled analyses and design of first-order methods with inexact proximal operators.
\newblock \emph{Mathematical Programming}, 201\penalty0 (1-2):\penalty0 185--230, 2023.

\bibitem[Barzilai and Borwein(1988)]{barzilai1988two}
Jonathan Barzilai and Jonathan~M Borwein.
\newblock Two-point step size gradient methods.
\newblock \emph{IMA Journal of Numerical Analysis}, 8\penalty0 (1):\penalty0 141--148, 1988.

\bibitem[Basu et~al.(2003)Basu, Pollack, and Roy]{BPRbook}
S.~Basu, R.~Pollack, and M.-F. Roy.
\newblock \emph{Algorithms in real algebraic geometry}, volume~10 of \emph{Algorithms and Computation in Mathematics}.
\newblock Springer-Verlag, Berlin, 2003.
\newblock ISBN 3-540-00973-6.

\bibitem[Beck and Teboulle(2009)]{beck2009fast}
Amir Beck and Marc Teboulle.
\newblock A fast iterative shrinkage-thresholding algorithm for linear inverse problems.
\newblock \emph{SIAM journal on imaging sciences}, 2\penalty0 (1):\penalty0 183--202, 2009.

\bibitem[Bertsekas(1999)]{BertsekasNonlinear}
D.P. Bertsekas.
\newblock \emph{Nonlinear programming}.
\newblock Athena Scientific, 1999.

\bibitem[Boyd and Vandenberghe(2004)]{boyd}
Stephen Boyd and Lieven Vandenberghe.
\newblock \emph{Convex optimization}.
\newblock Cambridge University Press, 2004.

\bibitem[Bubeck(2015)]{bubeck-book}
S{\'e}bastien Bubeck.
\newblock Convex optimization: Algorithms and complexity.
\newblock \emph{Foundations and Trends in Machine Learning}, 8\penalty0 (3-4):\penalty0 231--357, 2015.

\bibitem[Bubeck et~al.(2015)Bubeck, Lee, and Singh]{geometric-gd}
S{\'e}bastien Bubeck, Yin~Tat Lee, and Mohit Singh.
\newblock {A geometric alternative to Nesterov's accelerated gradient descent}.
\newblock \emph{arXiv preprint arXiv:1506.08187}, 2015.

\bibitem[Cauchy(1847)]{cauchy1847}
Augustin Cauchy.
\newblock M{\'e}thode g{\'e}n{\'e}rale pour la r{\'e}solution des systemes d’{\'e}quations simultan{\'e}es.
\newblock \emph{Comp. Rend. Sci. Paris}, 25\penalty0 (1847):\penalty0 536--538, 1847.

\bibitem[Caviness and Johnson(2012)]{CavinessJohnson}
Bob~F Caviness and Jeremy~R Johnson.
\newblock \emph{Quantifier elimination and cylindrical algebraic decomposition}.
\newblock Springer Science \& Business Media, 2012.

\bibitem[Cohen et~al.(2020)Cohen, Sidford, and Tian]{cohen2020relative}
Michael~B Cohen, Aaron Sidford, and Kevin Tian.
\newblock Relative lipschitzness in extragradient methods and a direct recipe for acceleration.
\newblock \emph{arXiv preprint arXiv:2011.06572}, 2020.

\bibitem[Cox et~al.(2013)Cox, Little, and O'Shea]{cox2013ideals}
David Cox, John Little, and Donal O'Shea.
\newblock \emph{Ideals, varieties, and algorithms: an introduction to computational algebraic geometry and commutative algebra}.
\newblock Springer Science \& Business Media, 2013.

\bibitem[Cyrus et~al.(2017)Cyrus, Hu, Van~Scoy, and Lessard]{CHVSL17}
Saman Cyrus, Bin Hu, Bryan Van~Scoy, and Laurent Lessard.
\newblock A robust accelerated optimization algorithm for strongly convex functions.
\newblock \emph{arXiv preprint arXiv:1710.04753}, 2017.

\bibitem[Das~Gupta et~al.(2022)Das~Gupta, Van~Parys, and Ryu]{gupta22}
Shuvomoy Das~Gupta, Bart~PG Van~Parys, and Ernest~K Ryu.
\newblock Branch-and-bound performance estimation programming: A unified methodology for constructing optimal optimization methods.
\newblock \emph{arXiv preprint arXiv:2203.07305}, 2022.

\bibitem[De~Klerk et~al.(2017)De~Klerk, Glineur, and Taylor]{de2017worst}
Etienne De~Klerk, Fran{\c{c}}ois Glineur, and Adrien~B Taylor.
\newblock On the worst-case complexity of the gradient method with exact line search for smooth strongly convex functions.
\newblock \emph{Optimization Letters}, 11:\penalty0 1185--1199, 2017.

\bibitem[De~Klerk et~al.(2020)De~Klerk, Glineur, and Taylor]{de2020worst}
Etienne De~Klerk, Francois Glineur, and Adrien~B Taylor.
\newblock Worst-case convergence analysis of inexact gradient and {N}ewton methods through semidefinite programming performance estimation.
\newblock \emph{SIAM Journal on Optimization}, 30\penalty0 (3):\penalty0 2053--2082, 2020.

\bibitem[Devolder et~al.(2014)Devolder, Glineur, and Nesterov]{devolder2014first}
Olivier Devolder, Fran{\c{c}}ois Glineur, and Yurii Nesterov.
\newblock First-order methods of smooth convex optimization with inexact oracle.
\newblock \emph{Mathematical Programming}, 146:\penalty0 37--75, 2014.

\bibitem[Diakonikolas and Jordan(2021)]{diakonikolas2021generalized}
Jelena Diakonikolas and Michael~I Jordan.
\newblock Generalized momentum-based methods: A {H}amiltonian perspective.
\newblock \emph{SIAM Journal on Optimization}, 31\penalty0 (1):\penalty0 915--944, 2021.

\bibitem[Diakonikolas and Orecchia(2017)]{diakonikolas2017accelerated}
Jelena Diakonikolas and Lorenzo Orecchia.
\newblock Accelerated extra-gradient descent: A novel accelerated first-order method.
\newblock \emph{arXiv preprint arXiv:1706.04680}, 2017.

\bibitem[Driscoll et~al.(1998)Driscoll, Toh, and Trefethen]{6steps}
Tobin~A Driscoll, Kim-Chuan Toh, and Lloyd~N Trefethen.
\newblock From potential theory to matrix iterations in six steps.
\newblock \emph{SIAM Review}, 40\penalty0 (3):\penalty0 547--578, 1998.

\bibitem[Drori and Taylor(2022)]{taylor23optimallb}
Yoel Drori and Adrien Taylor.
\newblock On the oracle complexity of smooth strongly convex minimization.
\newblock \emph{Journal of Complexity}, 68:\penalty0 101590, 2022.

\bibitem[Drori and Taylor(2018)]{DT18}
Yoel Drori and Adrien~B Taylor.
\newblock Efficient first-order methods for convex minimization: a constructive approach.
\newblock \emph{arXiv preprint arXiv:1803.05676}, 2018.

\bibitem[Drori and Teboulle(2014)]{DT14}
Yoel Drori and Marc Teboulle.
\newblock Performance of first-order methods for smooth convex minimization: a novel approach.
\newblock \emph{Mathematical Programming}, 145\penalty0 (1-2):\penalty0 451--482, 2014.

\bibitem[Drusvyatskiy et~al.(2018)Drusvyatskiy, Fazel, and Roy]{quad-averaging}
Dmitriy Drusvyatskiy, Maryam Fazel, and Scott Roy.
\newblock An optimal first order method based on optimal quadratic averaging.
\newblock \emph{SIAM Journal on Optimization}, 28\penalty0 (1):\penalty0 251--271, 2018.

\bibitem[d’Aspremont et~al.(2021)d’Aspremont, Scieur, and Taylor]{d2021acceleration}
Alexandre d’Aspremont, Damien Scieur, and Adrien Taylor.
\newblock Acceleration methods.
\newblock \emph{Foundations and Trends{\textregistered} in Optimization}, 5\penalty0 (1-2):\penalty0 1--245, 2021.

\bibitem[Goujaud et~al.(2022)Goujaud, Scieur, Dieuleveut, Taylor, and Pedregosa]{goujaud2022super}
Baptiste Goujaud, Damien Scieur, Aymeric Dieuleveut, Adrien~B Taylor, and Fabian Pedregosa.
\newblock Super-acceleration with cyclical step-sizes.
\newblock In \emph{International Conference on Artificial Intelligence and Statistics}, pages 3028--3065. PMLR, 2022.

\bibitem[Grimmer(2023)]{grimmer23}
Benjamin Grimmer.
\newblock Provably faster gradient descent via long steps.
\newblock \emph{arXiv preprint arXiv:2307.06324}, 2023.

\bibitem[Hazan(2016)]{hazan2016introduction}
Elad Hazan.
\newblock Introduction to online convex optimization.
\newblock \emph{Foundations and Trends{\textregistered} in Optimization}, 2\penalty0 (3-4):\penalty0 157--325, 2016.

\bibitem[Hestenes et~al.(1952)Hestenes, Stiefel, et~al.]{hestenes1952methods}
Magnus~R Hestenes, Eduard Stiefel, et~al.
\newblock Methods of conjugate gradients for solving linear systems.
\newblock \emph{Journal of Research of the National Bureau of Standards}, 49\penalty0 (6):\penalty0 409--436, 1952.

\bibitem[Hu et~al.(2017)Hu, Seiler, and Lessard]{HSL17}
Bin Hu, Peter Seiler, and Laurent Lessard.
\newblock Analysis of approximate stochastic gradient using quadratic constraints and sequential semidefinite programs.
\newblock \emph{arXiv preprint arXiv:1711.00987}, 2017.

\bibitem[Kalousek(2017)]{kalousek}
Zden{\v{e}}k Kalousek.
\newblock Steepest descent method with random step lengths.
\newblock \emph{Foundations of Computational Mathematics}, 17\penalty0 (2):\penalty0 359--422, 2017.

\bibitem[Kim and Fessler(2016)]{KF16}
Donghwan Kim and Jeffrey~A Fessler.
\newblock Optimized first-order methods for smooth convex minimization.
\newblock \emph{Mathematical Programming}, 159\penalty0 (1-2):\penalty0 81--107, 2016.

\bibitem[Kim and Fessler(2017)]{KF17}
Donghwan Kim and Jeffrey~A Fessler.
\newblock On the convergence analysis of the optimized gradient method.
\newblock \emph{Journal of Optimization Theory and Applications}, 172\penalty0 (1):\penalty0 187--205, 2017.

\bibitem[Kreuzer and Robbiano(2000)]{KreuzerRobbiano}
M.~Kreuzer and L.~Robbiano.
\newblock \emph{Computational commutative algebra. 1}.
\newblock Springer-Verlag, Berlin, 2000.
\newblock ISBN 3-540-67733-X.

\bibitem[Lebedev and Finogenov(1971)]{LebedevFinogenov71}
VI~Lebedev and SA~Finogenov.
\newblock Ordering of the iterative parameters in the cyclical {C}hebyshev iterative method.
\newblock \emph{USSR Computational Mathematics and Mathematical Physics}, 11\penalty0 (2):\penalty0 155--170, 1971.

\bibitem[Lessard et~al.(2016)Lessard, Recht, and Packard]{LRP16}
Laurent Lessard, Benjamin Recht, and Andrew Packard.
\newblock Analysis and design of optimization algorithms via integral quadratic constraints.
\newblock \emph{SIAM Journal on Optimization}, 26\penalty0 (1):\penalty0 57--95, 2016.

\bibitem[Luenberger and Ye(1984)]{luenberger1984linear}
David~G Luenberger and Yinyu Ye.
\newblock \emph{Linear and nonlinear programming}, volume~2.
\newblock Springer, 1984.

\bibitem[Moucer et~al.(2023)Moucer, Taylor, and Bach]{moucer2022systematic}
Céline Moucer, Adrien~B. Taylor, and Francis Bach.
\newblock A systematic approach to {L}yapunov analyses of continuous-time models in convex optimization.
\newblock \emph{SIAM Journal on Optimization}, 2023.

\bibitem[Nemirovskii and Yudin(1983)]{nem-yudin}
Arkadii Nemirovskii and David~Borisovich Yudin.
\newblock \emph{Problem complexity and method efficiency in optimization}.
\newblock Wiley, 1983.

\bibitem[Nesterov(1998)]{nesterov-survey}
Yurii Nesterov.
\newblock \emph{Introductory lectures on convex optimization: A basic course}, volume~87.
\newblock Springer Science \& Business Media, 1998.

\bibitem[Nesterov(1983)]{nesterov-agd}
Yurii~E. Nesterov.
\newblock A method of solving a convex programming problem with convergence rate ${O}(1/k^2)$.
\newblock \emph{Soviet Math. Dokl.}, 27\penalty0 (2):\penalty0 372--376, 1983.

\bibitem[Nocedal and Wright(1999)]{nocedal}
J.~Nocedal and S.~J. Wright.
\newblock \emph{Numerical Optimization}.
\newblock Springer, 1999.

\bibitem[Oymak(2021)]{oymak2021provable}
Samet Oymak.
\newblock Provable super-convergence with a large cyclical learning rate.
\newblock \emph{IEEE Signal Processing Letters}, 28:\penalty0 1645--1649, 2021.

\bibitem[Peng and Wang(2023)]{peng2023nesterov}
Weibin Peng and Tianyu Wang.
\newblock The {N}esterov-{S}pokoiny acceleration: $o (1/k^2)$ convergence without proximal operations.
\newblock \emph{arXiv preprint arXiv:2308.14314}, 2023.

\bibitem[Polyak(1964)]{polyak1964some}
Boris~T Polyak.
\newblock Some methods of speeding up the convergence of iteration methods.
\newblock \emph{USSR Computational Mathematics and Mathematical Physics}, 4\penalty0 (5):\penalty0 1--17, 1964.

\bibitem[Polyak(1987)]{polyakbook}
Boris~T. Polyak.
\newblock \emph{Introduction to optimization}.
\newblock Optimization Software, Inc., 1987.

\bibitem[Pronzato and Zhigljavsky(2011)]{pronzato11}
Luc Pronzato and Anatoly Zhigljavsky.
\newblock Gradient algorithms for quadratic optimization with fast convergence rates.
\newblock \emph{Computational Optimization and Applications}, 50\penalty0 (3):\penalty0 597--617, 2011.

\bibitem[Rivlin(1981)]{Rivlin}
Theodore~J Rivlin.
\newblock \emph{An introduction to the approximation of functions}.
\newblock Courier Corporation, 1981.

\bibitem[Rockafellar(1970)]{rockafellar}
Ralph~Tyrell Rockafellar.
\newblock \emph{Convex analysis}.
\newblock Princeton University Press, 1970.

\bibitem[Ryu et~al.(2020)Ryu, Taylor, Bergeling, and Giselsson]{ryu2020operator}
Ernest~K Ryu, Adrien~B Taylor, Carolina Bergeling, and Pontus Giselsson.
\newblock Operator splitting performance estimation: Tight contraction factors and optimal parameter selection.
\newblock \emph{SIAM Journal on Optimization}, 30\penalty0 (3):\penalty0 2251--2271, 2020.

\bibitem[Shi et~al.(2019)Shi, Du, Su, and Jordan]{shi2019acceleration}
Bin Shi, Simon~S Du, Weijie Su, and Michael~I Jordan.
\newblock Acceleration via symplectic discretization of high-resolution differential equations.
\newblock \emph{Advances in Neural Information Processing Systems}, 32, 2019.

\bibitem[Shi et~al.(2021)Shi, Du, Jordan, and Su]{shi2021understanding}
Bin Shi, Simon~S Du, Michael~I Jordan, and Weijie~J Su.
\newblock Understanding the acceleration phenomenon via high-resolution differential equations.
\newblock \emph{Mathematical Programming}, pages 1--70, 2021.

\bibitem[Smith(2017)]{smith2017cyclical}
Leslie~N Smith.
\newblock Cyclical learning rates for training neural networks.
\newblock In \emph{IEEE Winter Conference on Applications of Computer Vision}, pages 464--472. IEEE, 2017.

\bibitem[Smith and Topin(2019)]{smith2019super}
Leslie~N Smith and Nicholay Topin.
\newblock Super-convergence: Very fast training of neural networks using large learning rates.
\newblock In \emph{Artificial Intelligence and Machine Learning for Multi-Domain Operations Applications}, volume 11006, pages 369--386. SPIE, 2019.

\bibitem[Su et~al.(2016)Su, Boyd, and Cand\`es]{su2016differential}
Weijie Su, Stephen Boyd, and Emmanuel~J Cand\`es.
\newblock A differential equation for modeling {N}esterov's accelerated gradient method: {T}heory and insights.
\newblock \emph{The Journal of Machine Learning Research}, 17\penalty0 (1):\penalty0 5312--5354, 2016.

\bibitem[Taylor and Drori(2023)]{taylor23optimalub}
Adrien Taylor and Yoel Drori.
\newblock An optimal gradient method for smooth strongly convex minimization.
\newblock \emph{Mathematical Programming}, 199\penalty0 (1-2):\penalty0 557--594, 2023.

\bibitem[Taylor(2017)]{taylor2017convex}
Adrien~B Taylor.
\newblock \emph{Convex interpolation and performance estimation of first-order methods for convex optimization.}
\newblock PhD thesis, Catholic University of Louvain, Louvain-la-Neuve, Belgium, 2017.

\bibitem[Taylor et~al.(2017)Taylor, Hendrickx, and Glineur]{pesto}
Adrien~B Taylor, Julien~M Hendrickx, and Fran{\c{c}}ois Glineur.
\newblock Smooth strongly convex interpolation and exact worst-case performance of first-order methods.
\newblock \emph{Mathematical Programming}, 161\penalty0 (1-2):\penalty0 307--345, 2017.

\bibitem[Tseng(2008)]{tseng2008accelerated}
Paul Tseng.
\newblock On accelerated proximal gradient methods for convex-concave optimization.
\newblock \emph{URL http://www. math. washington.edu/tseng/papers/apgm. pdf}, 2008.

\bibitem[Van~Scoy et~al.(2018)Van~Scoy, Freeman, and Lynch]{tmm}
Bryan Van~Scoy, Randy~A Freeman, and Kevin~M Lynch.
\newblock The fastest known globally convergent first-order method for minimizing strongly convex functions.
\newblock \emph{IEEE Control Systems Letters}, 2\penalty0 (1):\penalty0 49--54, 2018.

\bibitem[Varga(2000)]{Varga}
Richard~S Varga.
\newblock \emph{Matrix Iterative Analysis}, volume~27 of \emph{Springer Series in Computational Mathematics}.
\newblock Springer, 2000.

\bibitem[Wibisono et~al.(2016)Wibisono, Wilson, and Jordan]{WWC16}
Andre Wibisono, Ashia~C Wilson, and Michael~I Jordan.
\newblock A variational perspective on accelerated methods in optimization.
\newblock \emph{Proceedings of the National Academy of Sciences}, 113\penalty0 (47):\penalty0 E7351--E7358, 2016.

\bibitem[Wilson et~al.(2016)Wilson, Recht, and Jordan]{WRJ16}
Ashia~C Wilson, Benjamin Recht, and Michael~I Jordan.
\newblock A {L}yapunov analysis of momentum methods in optimization.
\newblock \emph{arXiv preprint arXiv:1611.02635}, 2016.

\bibitem[Young(1953)]{young53}
David Young.
\newblock On {R}ichardson's method for solving linear systems with positive definite matrices.
\newblock \emph{Journal of Mathematics and Physics}, 32\penalty0 (1-4):\penalty0 243--255, 1953.

\bibitem[Zhigljavsky et~al.(2013)Zhigljavsky, Pronzato, and Bukina]{pronzato13}
Anatoly Zhigljavsky, Luc Pronzato, and Elena Bukina.
\newblock An asymptotically optimal gradient algorithm for quadratic optimization with low computational cost.
\newblock \emph{Optimization Letters}, 7\penalty0 (6):\penalty0 1047--1059, 2013.

\end{thebibliography}

\end{document}